\newtheorem{thm}{Theorem}[section]
\newtheorem{cor}[thm]{Corollary}
\newtheorem{lem}[thm]{Lemma}
\newtheorem{prop}[thm]{Proposition}
\theoremstyle{definition}
\newtheorem{defn}[thm]{Definition}
\theoremstyle{remark}
\newtheorem{rem}[thm]{Remark}
\numberwithin{equation}{section}
\newcommand{\R}{\mathbb R}
\newcommand{\eps}{\varepsilon}
\newcommand{\p}{\partial}
\newcommand{\comment}[1]{}
\begin{document}

\title[Pointwise $C^{2,\alpha}$ estimates]{Pointwise $C^{2,\alpha}$ estimates at the boundary for the Monge-Ampere equation}
\author{O. Savin}
\address{Department of Mathematics, Columbia University, New York, NY 10027}
\email{\tt  savin@math.columbia.edu}

\thanks{The author was partially supported by NSF grant 0701037.}

\begin{abstract}
We obtain pointwise $C^{2,\alpha}$ estimates at boundary points for solutions to the Monge-Ampere equation under appropriate local conditions on the right hand side and boundary data.
\end{abstract}
\maketitle

\section{Introduction}

Boundary estimates for solutions of the Dirichlet problem for the Monge-Ampere equation
$$\left \{
\begin{array}{l}
\det D^2 u=f \quad \mbox{in $\Omega,$}\\
u=\varphi \quad \quad \quad \mbox {on $\p \Omega$,}
\end{array}
\right.
$$
were obtained in the classical paper by Caffarelli, Nirenberg and Spruck \cite{CNS} in the case when $\p \Omega$, $\varphi$ and $f$ are sufficiently smooth. When $f$ is less regular, i.e $f \in C^\alpha$, the global $C^{2,\alpha}$ estimates were obtained by Trudinger and Wang  \cite{TW} for $\varphi$, $\p \Omega \in C^3$. In this paper we discuss pointwise $C^{2,\alpha}$ estimates at boundary points under appropriate local conditions on the right hand side and boundary data. Our main result can be viewed as a natural extension up to the boundary of the pointwise interior $C^{2,\alpha}$ estimate of Caffarelli in \cite{C2}.

We start with the following definition (see \cite{CC}).

{\it Definition:} Let $0< \alpha \le 1$. We say that a function $u$ is {\it pointwise} $C^{2,\alpha}$ at $x_0$ and write $$u \in C^{2, \alpha}(x_0)$$  if there exists a quadratic polynomial $P_{x_0}$ such that $$u(x)=P_{x_0}(x) + O(|x-x_0|^{2+\alpha}).$$
We say that $u\in C^2(x_0)$ if $$u(x)=P_{x_0}(x)+o(|x-x_0|^2).$$
Similarly one can define the notion for a function to be $C^k$ and $C^{k,\alpha}$ at a point for any integer $k \ge 0$.

\

It is easy to check that if $u$ is {\it pointwise} $C^{2, \alpha}$ at all points of a Lipschitz domain $\bar \Omega$ and the equality in the definition above is uniform in $x_0$ then $u \in C^{2, \alpha}(\bar \Omega)$ in the classical sense. Precisely, if there exists $M$ and $\delta$ such that for all points $x_0 \in \bar \Omega$
$$|u(x)-P_{x_0}(x)|\le M|x-x_0|^{2+\alpha}  \quad \mbox{if $|x-x_0|\le \delta$, $\ x \in \bar \Omega$} $$ then $$[D^2 u]_{C^\alpha(\bar \Omega)} \le  C(\delta, \Omega)M.$$

Caffarelli showed in \cite{C2} that if $u$ is a strictly convex solution of $$\det D^2 u=f$$ and  $f \in C^\alpha(x_0)$, $f(x_0)>0$ at some interior point $x_0 \in \Omega$, then $u \in C^{2,\alpha}(x_0)$. Our main theorem deals with the case when $x_0 \in \p \Omega$.

\begin{thm}\label{main}
Let $\Omega$ be a convex domain and let $u:\bar \Omega \to \R$ convex, continuous, solve the Dirichlet problem for the Monge-Ampere equation
  \begin{equation}\label{DP}
  \left \{
\begin{array}{l}
\det D^2 u=f \quad \mbox{in $\Omega,$}\\
u=\varphi \quad \quad \quad \mbox {on $\p \Omega$,}
\end{array}
\right.
\end{equation}
with positive, bounded right hand side i.e $$0< \lambda \le f \le \Lambda,$$ for some constants $\lambda$, $\Lambda$.

Assume that for some point $x_0 \in \p \Omega$ we have $$f \in C^\alpha(x_0), \quad \varphi, \p \Omega \in C^{2, \alpha}(x_0),$$for some $\alpha \in (0,1)$. If $\varphi$ separates quadratically on $\p \Omega$ from the tangent plane of $u$ at $x_0$, then
  $$u \in C^{2, \alpha}(x_0).$$
\end{thm}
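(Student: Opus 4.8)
The plan is to reproduce, at the boundary point $x_0$, the iteration that Caffarelli uses in \cite{C2} for the interior $C^{2,\alpha}$ estimate, the role of the interior geometry of sections being played here by its boundary analogue. Normalize so that $x_0=0$ and, after subtracting the tangent plane of $u$ at $0$, $u\ge 0$, $u(0)=0$, near $0$ we have $\Omega\subset\{x_n>0\}$ with $\p\Omega$ tangent to $\{x_n=0\}$ at $0$, and $\varphi=u|_{\p\Omega}$ separates quadratically from $0$ on $\p\Omega$ near $0$. The first step is to pin down the geometry of the boundary sections $S_h:=\{x\in\ov\Omega:\ u(x)<h\}$ for small $h>0$. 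Using the quadratic separation together with $0<\lambda\le f\le\Lambda$ and $\varphi,\p\Omega\in C^{2,\alpha}(0)\subset C^{1,1}(0)$, one shows, via suitable barriers, that $u$ itself separates quadratically from $0$ throughout $\ov\Omega$ near $0$, that the $S_h$ are bounded and shrink to $0$, and, crucially, that each $S_h$ is \emph{balanced}: there is an affine map $A_h$ fixing the hyperplane $\{x_n=0\}$, with $\norm{A_h}$, $\norm{A_h^{-1}}$ and the consecutive compositions $A_{2h}A_h^{-1}$ all suitably controlled, such that $A_h^{-1}S_h$ is comparable to the half-ball $B_1^+:=B_1\cap\{x_n>0\}$ dilated by $\sqrt h$.

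The core is a perturbation step, iterated along the geometric sequence of scales $h_k=\theta^k$ for a small fixed $\theta$. Consider the rescalings $u_h(x):=h^{-1}u(A_h x)$: convex normalized solutions of a Monge-Ampere equation $\det D^2 u_h=f_h$ on $\Omega_h:=A_h^{-1}\Omega$, with $f_h$ a rescaling of $f$ and $\varphi_h$ the rescaled boundary data. As $h\to0$ the data converge, with error $O(h^{\alpha/2})$, to the \emph{model configuration}: the half-space $\{x_n>0\}$, the constant $f(0)$ on it, and a fixed quadratic polynomial on $\{x_n=0\}$ — here $f\in C^\alpha(0)$, $\varphi\in C^{2,\alpha}(0)$ and $\p\Omega\in C^{2,\alpha}(0)$ enter, together with the control on $A_h$ (the quadratic part of $\varphi$ survives in the limit, while the curvature of $\p\Omega$ and all $(2+\alpha)$-order errors scale away). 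By compactness, any limit $w$ is a global convex solution of $\det D^2 w=f(0)$ in $\{x_n>0\}$ with quadratic growth and quadratic Dirichlet data on $\{x_n=0\}$; the a priori second-derivative estimate for this model problem — a Pogorelov-type estimate up to a flat boundary — together with the Liouville-type classification of such global solutions forces $w$ to be a quadratic polynomial $P$ with $\det D^2 P=f(0)$ and the prescribed quadratic trace on $\{x_n=0\}$. Unwinding the compactness yields the improvement step: once the deviation of the data from the model at scale $h$ is below a suitable modulus, $u$ coincides on $S_{\theta h}$, up to an error $O(h^{1+\alpha/2})$, with a quadratic polynomial $P_h$ satisfying $\det D^2 P_h=f(0)$ and matching the quadratic part of $\varphi$.

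Iterating the improvement at the scales $h_k=\theta^k$ produces quadratic polynomials $P_k$, all with $\det D^2 P_k=f(0)$ and with $\norm{D^2 P_{k+1}-D^2 P_k}\le C h_k^{\alpha/2}$ — the geometric decay being precisely the $C^\alpha$, respectively $C^{2,\alpha}$, deviation of $f$, $\varphi$ and $\p\Omega$ at scale $h_k$. Hence $P_k\to P_{x_0}$ and $\norm{u-P_{x_0}}_{L^\infty(S_{h_k})}\le C h_k^{1+\alpha/2}$; since the balanced geometry gives $\ov\Omega\cap B_{c\sqrt{h_k}}(x_0)\subset S_{h_k}$ (up to the lower-order curvature of $\p\Omega$), writing $r=c\sqrt{h_k}$ gives $\abs{u-P_{x_0}}\le C r^{2+\alpha}$ on $\ov\Omega\cap B_r(x_0)$ for all small $r$, which is the assertion $u\in C^{2,\alpha}(x_0)$. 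The main obstacle I anticipate is the first step: extracting, from the quadratic separation and the pointwise $C^{2,\alpha}$ hypotheses on $\varphi$ and $\p\Omega$, the \emph{balanced} geometry of the $S_h$ and the uniform control on the normalizing maps $A_h$ together with their consecutive compositions — this is exactly the boundary localization phenomenon, and it requires a careful analysis of how the boundary-fixing affine maps interact with the quadratic expansions of $\varphi$ and $\p\Omega$. A second delicate point is the classification of the limiting model solution $w$, which rests on the a priori estimate for the model boundary value problem.
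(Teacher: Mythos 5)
Your outline correctly identifies the three pillars of the proof: a boundary localization theorem giving the geometry of $S_h$, a Pogorelov-type estimate up to a flat boundary, and a compactness/iteration step. However there are genuine gaps. First, you assert that $\norm{A_h}$ and $\norm{A_h^{-1}}$ are ``suitably controlled''; the Localization Theorem only gives $\norm{A_h}\le C|\log h|$, which blows up as $h\to 0$. Only the consecutive compositions $A_{h}A_{h/2}^{-1}$ are uniformly bounded. This logarithmic degeneracy is not cosmetic: in your rescaling $u_h(x)=h^{-1}u(A_h x)$ the perturbation of the data is then of size $O((h^{1/2}|\log h|)^{\alpha})$, not $O(h^{\alpha/2})$, and the paper has to check explicitly that this still tends to $0$. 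More to the point, the paper avoids iterating the ellipsoid normalization at every scale: it applies the localization \emph{once} (Lemma~7.2) to bring $u$ to a normalized half-domain where the data deviates from the model by an arbitrarily small $\sigma$, then uses compactness plus the Pogorelov estimate (Lemma~7.3) to get within $\eps_0$ of a quadratic $P_0$, and finally iterates with plain balls $B_{r_0^m}$ and a linear-elliptic comparison (Lemma~7.4). Iterating the $A_h$'s at each $h_k$, as you propose, would force you to control products of matrices whose individual norms diverge logarithmically, which is avoidable and unnecessary.

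Second, you invoke ``the Liouville-type classification of such global solutions'' in the half-space. The paper neither proves nor uses such a Liouville theorem; it works exclusively on the bounded half-domain $B_{k^{-1}}^{+}$, where the Pogorelov estimate in half-domain (Theorem~6.4, proved via barriers, tangential differentiation and the classical interior Pogorelov inequality) gives an a priori $C^{3,1}$ bound on a fixed smaller half-ball, and that is enough to extract a quadratic approximation of the limit — no global classification required. A half-space Liouville theorem for Monge--Amp\`ere with quadratic boundary data is true but is not a standard off-the-shelf result; as written your argument leans on it without proof. Finally, both the Localization Theorem (Sections~4--5 of the paper, involving the inductive Propositions~5.1--5.2 and the sliding maps) and the boundary Pogorelov estimate (Section~6) are substantial theorems in their own right; you flag them as ``obstacles you anticipate,'' but an honest assessment is that they constitute the bulk of the work, so the proposal as it stands establishes only the reduction scheme, not the theorem.
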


The way $\varphi$ separates locally from the tangent plane at $x_0$ is given by the tangential second derivatives of $u$ at $x_0$. Thus the assumption that this separation is quadratic is in fact necessary for the $C^{2, \alpha}$ estimate to hold. Heuristically, Theorem \ref{main} states that if the tangential pure second derivatives of $u$ are bounded below then the boundary Schauder estimates hold for the Monge-Ampere equation.

A more precise, quantitative version of Theorem \ref{main} is given in section 7 (see Theorem \ref{c2alpha}).

Given the boundary data, it is not always easy to check the quadratic separation since it involves some information about the slope of the tangent plane at $x_0$. However, this can be done in several cases (see Proposition \ref{Prop1}). One example is when $\p \Omega$ is uniformly convex and $\varphi$, $\p \Omega \in C^3(x_0)$. The $C^3$ condition of the data is optimal as it was shown by Wang in \cite{W}. Other examples are when $\p \Omega$ is uniformly convex and $\varphi$ is linear, or when $\p \Omega$ is tangent of second order to a plane at $x_0$ and $\varphi$ has quadratic growth near $x_0$.

As a consequence of Theorem \ref{main} we obtain a pointwise $C^{2, \alpha}$ estimate in the case when the boundary data and the domain are pointwise $C^3$. As mentioned above, the global version was obtained by Trudinger and Wang in \cite{TW}.
\begin{thm}\label{m1}
Let $\Omega$ be uniformly convex and let $u$ solve \eqref{DP}. Assume that
$$ f\in C^\alpha(x_0), \quad \varphi, \p \Omega \in C^3(x_0),$$
for some point $x_0 \in \p \Omega$, and some $\alpha \in (0,1)$.
Then $u \in C^{2, \alpha}(x_0).$
\end{thm}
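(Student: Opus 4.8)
The plan is to derive Theorem \ref{m1} as a corollary of Theorem \ref{main}. Since Theorem \ref{main} already provides the pointwise $C^{2,\alpha}$ estimate once we know that $\varphi$ separates quadratically on $\p\Omega$ from the tangent plane of $u$ at $x_0$, the only thing left to check is that this quadratic separation holds automatically under the hypotheses that $\Omega$ is uniformly convex and $\varphi,\p\Omega\in C^3(x_0)$. (This is exactly the content that Proposition \ref{Prop1} is advertised to cover, so morally Theorem \ref{m1} = Theorem \ref{main} + Proposition \ref{Prop1}.)

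First I would normalize: after an affine change of coordinates, place $x_0$ at the origin, arrange that $\p\Omega$ is tangent to $\{x_n=0\}$ at $0$ with $\Omega\subset\{x_n>0\}$ locally, and subtract the tangent plane $\ell$ of $u$ at $0$, so that $u\ge 0$ with $u(0)=0$ and $Du(0)=0$. The tangent plane of $u$ at a boundary point is well defined because $u$ is convex and $\varphi,\p\Omega$ are $C^2$ at $0$; the interior normal derivative may be $+\infty$, but the tangential gradient of $u$ equals the tangential gradient of $\varphi$, which exists. Write $x=(x',x_n)$. Using $\p\Omega\in C^3(0)$ and uniform convexity, the boundary is locally a graph $x_n = \tfrac12 x'^TAx' + O(|x'|^3)$ with $A\ge cI>0$. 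Using $\varphi\in C^3(0)$, on $\p\Omega$ we have $\varphi(x) - \ell(x) = \tfrac12 x'^TBx' + O(|x'|^3)$ for a symmetric matrix $B$ (the tangential Hessian of $\varphi$ minus that of $\ell$; note $\ell$ contributes a term proportional to $x_n$ which is itself $O(|x'|^2)$ on $\p\Omega$, so it feeds into $B$). Hence on $\p\Omega$, near $0$,
\[
u(x) = \varphi(x) = \tfrac12\, x'^T(B + A\,\p_n\ell)\,x' + O(|x'|^3),
\]
wait — more simply, $u-\ell$ restricted to $\p\Omega$ is a $C^3(0)$ function of $x'$ vanishing to first order, hence equals $\tfrac12 x'^T Q x' + O(|x'|^3)$ for some symmetric $Q$. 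Quadratic separation from below means $Q\ge \kappa I$ for some $\kappa>0$. The point is that $Q$ is \emph{not} free: the hypothesis $0<\lambda\le f\le\Lambda$ forces a nondegeneracy of $u$ that, combined with the uniform convexity of $\p\Omega$, makes $Q$ strictly positive. Concretely, one compares $u$ from below with a suitable subsolution: since $u=\varphi$ is given and $u\ge\ell$, one builds a convex barrier of the form $\ell(x) + \delta_1 x_n - \delta_2(\text{uniformly convex defining function of }\Omega) + c|x'|^2$ — a small paraboloid-type barrier touching $\p\Omega$ — whose Monge-Ampère measure is $\le\lambda$ for appropriate constants, and whose trace on $\p\Omega$ lies below $\varphi$; by the comparison principle $u$ lies above it, which yields $u-\ell\ge c|x'|^2$ on $\p\Omega$, i.e. $Q\ge\kappa I$. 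Equivalently: if $Q$ were degenerate in some tangential direction $e$, then along the boundary curve in direction $e$ the solution would separate only like $o(|x'|^2)$, forcing the section $\{u<\ell+h\}$ to be too long and too thin in that direction, contradicting John's lemma applied to sections together with the bound $\det D^2u\ge\lambda$ (this is the standard mechanism by which a strict positive lower bound on $f$ and uniform convexity of the domain prevent tangential degeneracy). Once $Q\ge\kappa I$ is established with $\kappa=\kappa(\lambda,\Lambda,\Omega,\varphi)>0$, the hypotheses of Theorem \ref{main} are met — $f\in C^\alpha(x_0)$, $\varphi,\p\Omega\in C^{2,\alpha}(x_0)\supset C^3(x_0)$, and $\varphi$ separates quadratically — and we conclude $u\in C^{2,\alpha}(x_0)$.

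I expect the main obstacle to be precisely the quantitative lower bound $Q\ge\kappa I$: producing an explicit convex subsolution/barrier adapted to the uniformly convex boundary whose Monge-Ampère measure dominates $\lambda dx$ while staying below the boundary data, and controlling the constants. The barrier must be genuinely convex up to $\p\Omega$ and touch the boundary graph from inside along a whole neighborhood of $0$, which is a delicate construction; the cross term between the uniform convexity of $\p\Omega$ (giving the $x_n\sim|x'|^2$ relation on the boundary) and the positivity of $f$ is what generates the gain $c|x'|^2$. Everything downstream — verifying the regularity hypotheses of Theorem \ref{main} and invoking it — is then immediate, since $C^3(x_0)\subset C^{2,\alpha}(x_0)$ for every $\alpha\in(0,1)$.
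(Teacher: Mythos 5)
Your high-level structure is exactly the paper's: Theorem \ref{m1} is Theorem \ref{main} together with the case (3) of Proposition \ref{Prop1}, and the only substantive step is to show that the $C^3$ data plus uniform convexity plus $f\ge\lambda>0$ force quadratic separation of $\varphi$ from the tangent plane of $u$. You also correctly normalize and reduce the question to showing that the tangential Hessian $Q$ of $\varphi-\ell$ (restricted to $\p\Omega$) is strictly positive.

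The problem is the mechanism you lean on to get $Q\ge\kappa I$. Your ``mechanism 1'' (the convex barrier $\ell+\delta_1 x_n-\delta_2\rho_\Omega+c|x'|^2$ with trace below $\varphi$) is circular: requiring the barrier's trace on $\p\Omega$ to sit below $\varphi$, while the trace is already $\ge\ell+c'|x'|^2$ there (since $x_n\sim|x'|^2$ on a uniformly convex boundary), presupposes the inequality $\varphi-\ell\ge c'|x'|^2$ that you are trying to prove. Such a barrier simply cannot exist if $\varphi-\ell$ degenerates, e.g.\ if $\varphi-\ell\sim x_1^4$ along a tangential direction; it is not a ``delicate construction'' but an impossible one in the degenerate case. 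What the barrier argument actually gives (and what the paper uses it for) is only the bound on the $x_n$-slope of the tangent plane $\ell$, i.e.\ control of the single normal scalar, not of the tangential quadratic form $Q$. Your ``mechanism 2'' is the correct one and is what the paper does, but it is not equivalent to the barrier, and you leave out the step that actually makes it work and explains the $C^3$ threshold: because $\varphi-\ell\ge 0$ and is $C^3$ at $0$, if $Q e_1=0$ then the cubic coefficient of $x_1^3$ must also vanish, so $\varphi-\ell=o(|x_1|^3)$ in that direction; hence for small $h$ the boundary set $\{\varphi<h\}$ contains a slab $\{|x_1|\le r(h)h^{1/3}\}\cap\{|x'|\le c h^{1/2}\}$ with $r(h)\to\infty$. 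Uniform convexity of $\Omega$ then forces the convex hull of this set (and hence $S_h$) to have volume at least $c\,r(h)^3 h^{n/2}$, contradicting $|S_h|\le Ch^{n/2}$ from $\det D^2u\ge\lambda$. Note that with merely $C^{2,\alpha}$ data this argument fails (one only gets extent $\sim h^{1/(2+\alpha)}$, which does not blow up the volume bound), which is precisely why the $C^3$ hypothesis is needed and why Wang's example shows $C^3$ is sharp. So: keep the skeleton, drop the barrier as the route to $Q>0$, and flesh out the vanishing-cubic/volume-blowup argument.
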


We also obtain the $C^{2,\alpha}$ estimate in the simple situation when $\p \Omega \in C^{2, \alpha}$ and $\varphi$ is constant.

\begin{thm}\label{m3}
Let $\Omega$ be a uniformly convex domain and assume $u$ solves \eqref{DP} with $\varphi \equiv 0$. If $f \in C^\alpha(\bar \Omega)$,  $\p \Omega \in C^{2, \alpha},$ for some $\alpha\in (0,1)$ then $u \in C^{2, \alpha}(\bar \Omega).$
\end{thm}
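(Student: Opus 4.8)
The plan is to deduce Theorem \ref{m3} from Theorem \ref{main} (equivalently, from its quantitative version \ref{c2alpha}) together with the uniform–in–$x_0$ upgrade from pointwise $C^{2,\alpha}$ to classical $C^{2,\alpha}(\bar\Omega)$ noted in the introduction. First I would treat the easy region: since $f\in C^\alpha(\bar\Omega)$ with $0<\lambda\le f\le\Lambda$ and $\Omega$ is uniformly (hence strictly) convex, $u$ is strictly convex in $\Omega$, so Caffarelli's interior pointwise estimate from \cite{C2} gives $u\in C^{2,\alpha}(x_0)$ at every interior $x_0$; a standard covering/compactness argument then bounds $[D^2u]_{C^\alpha(\Omega')}$ on every $\Omega'\Subset\Omega$ in terms of the distance to $\p\Omega$. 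So the whole content is at the boundary, and there it suffices to verify the hypotheses of Theorem \ref{main} at each $x_0\in\p\Omega$ with constants uniform in $x_0$.

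Fix $x_0\in\p\Omega$. The right-hand side is fine: $f\in C^\alpha(\bar\Omega)$ gives $f\in C^\alpha(x_0)$ uniformly. For the boundary data we have $\varphi\equiv0$, which is certainly in $C^{2,\alpha}(x_0)$, and $\p\Omega\in C^{2,\alpha}$ gives $\p\Omega\in C^{2,\alpha}(x_0)$ uniformly. The one real point is the quadratic separation hypothesis: I must show that $\varphi\equiv0$ separates quadratically on $\p\Omega$ from the tangent plane of $u$ at $x_0$, uniformly in $x_0$. After subtracting the tangent plane $\ell_{x_0}$ of $u$ at $x_0$ and choosing coordinates so that $x_0=0$, $\p\Omega$ lies locally above the hyperplane $\{x_n=0\}$, the function $\varphi-\ell_{x_0}$ restricted to $\p\Omega$ is, near $0$, comparable to $-\langle \nabla'\ell_{x_0},x'\rangle$ on the graph $x_n=\rho(x')$ with $\rho(x')\approx \tfrac12 x'^TD^2\rho(0)x'$ and $D^2\rho(0)>0$ by uniform convexity. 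Quadratic separation from below, $\varphi-\ell_{x_0}\ge c|x'|^2$ on $\p\Omega\cap B_r$, is exactly the statement that this linear-in-$x'$ term is dominated by the positive-definite quadratic coming from the curvature of $\p\Omega$ — which holds provided the tangential gradient $\nabla'\ell_{x_0}$ vanishes, i.e. provided the tangent plane of $u$ at $x_0$ is the tangent plane of $\p\Omega$ (equivalently $\nabla u(x_0)$ is normal to $\p\Omega$). This is precisely the content of one of the cases flagged after Theorem \ref{main} (``$\p\Omega$ uniformly convex and $\varphi$ linear''), recorded in Proposition \ref{Prop1}; I would invoke that, checking that the constant $c$ and radius $r$ it produces depend only on $\lambda,\Lambda,n$ and the convexity and $C^{2,\alpha}$ bounds of $\p\Omega$, not on $x_0$.

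With the hypotheses of Theorem \ref{main} verified uniformly over $x_0\in\p\Omega$, the quantitative Theorem \ref{c2alpha} yields $M$ and $\delta$ (depending only on the structural constants) with $|u(x)-P_{x_0}(x)|\le M|x-x_0|^{2+\alpha}$ for $x\in\bar\Omega$, $|x-x_0|\le\delta$, at every boundary point; combined with the interior estimate this gives the same inequality, with possibly adjusted $M,\delta$, at every $x_0\in\bar\Omega$. The elementary lemma stated in the introduction then converts this into $[D^2u]_{C^\alpha(\bar\Omega)}\le C(\delta,\Omega)M$, i.e. $u\in C^{2,\alpha}(\bar\Omega)$. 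The main obstacle I anticipate is not any of these reductions individually but the bookkeeping of uniformity: making sure that when Proposition \ref{Prop1} and Theorem \ref{c2alpha} are applied at a varying base point $x_0$, every constant is controlled purely in terms of $\lambda,\Lambda,n$ and fixed geometric data of $\Omega$ and $f$, so that the pointwise estimates patch into a genuine global Schauder bound; the only mildly delicate analytic input behind that is the uniform lower bound on the quadratic separation, which must be extracted from uniform convexity of $\p\Omega$ and the normality of $\nabla u$ at the boundary.
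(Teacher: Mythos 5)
Your plan follows the paper's (implicit) route for Theorem~\ref{m3}: fix a boundary point, reduce to the normalized setting of Theorem~\ref{c2alpha}, verify the quadratic-separation hypothesis via case~(i) of Proposition~\ref{Prop1} (with $\varphi\equiv0$ linear and $\Omega$ uniformly convex, so the tangent plane at $x_0$ has no tangential component and the uniform convexity supplies the $\mu|x'|^2$ lower bound, with constants controlled uniformly in $x_0$), apply the pointwise boundary estimate, and patch with the interior $C^{2,\alpha}$ estimate of \cite{C2}. That is the decomposition the paper uses when it assembles Theorems~\ref{c2alpha} and~\ref{global}.

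The one step you pass over too quickly is the gluing. You note yourself that the interior Caffarelli estimate gives $[D^2u]_{C^\alpha(\Omega')}$ with constants blowing up as $\mathrm{dist}(\Omega',\p\Omega)\to0$, and then assert that this, together with a pointwise $C^{2,\alpha}$ bound \emph{at boundary points only}, already yields a uniform pointwise $C^{2,\alpha}$ bound at every $x_0\in\bar\Omega$. It does not follow automatically: for interior points near $\p\Omega$ neither input applies with uniform constants. The paper supplies the bridge explicitly (Corollary~\ref{cor1}, built on Remark~\ref{rem7}): after the localization rescaling, any interior point in a cone $\mathcal{C}_\theta\cap B_\delta$ around a boundary point becomes a point at unit distance from the boundary for a normalized function satisfying the hypotheses of the iteration Lemma~\ref{lem3}, so that the interior version of that lemma gives a uniform pointwise $C^{2,\alpha}$ estimate there. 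This yields $\|u\|_{C^{2,\alpha}(D_\delta)}\le C$ on a full boundary collar $D_\delta$, after which your patching with the interior estimate on $\Omega\setminus D_\delta$ and the elementary pointwise-to-classical lemma from the introduction completes the argument. With that step inserted, your proof matches the paper's.
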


The key step in the proof of Theorem \ref{main} is a localization theorem for boundary points which was recently proved in \cite{S}. It states that under natural local assumptions on the domain and boundary data, the sections
  $$S_h(x_0)=\{x \in \overline \Omega \ | \quad u(x)<u(x_0)+\nabla u(x_0) \cdot (x-x_0)+h\},$$
with $x_0 \in \p \Omega$ are ``equivalent" to ellipsoids centered at $x_0$.

\begin{thm}\label{loc_thm}
 Let $\Omega$ be convex and $u$ satisfy \eqref{DP}, and assume $$\p \Omega, \varphi \in C^{1,1}(x_0).$$ If $\varphi$ separates quadratically from the tangent plane of $u$ at $x_0$, then for each small $h>0$ there exists an ellipsoid $E_h$ of volume $h^{n/2}$ such that $$cE_h \cap \overline \Omega \subset \, S_h(x_0) -x_0 \subset \, CE_h \cap \overline \Omega,$$
with $c$, $C$ constants independent of $h$.
\end{thm}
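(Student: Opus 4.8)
\emph{The plan} is to reduce to a normalized configuration and then run a scaling iteration in $h$, tracking affine normalizations adapted to the boundary; controlling the degeneration of those normalizations is the crux. \emph{Step 1 (normalization).} First I would subtract from $u$ the tangent plane $\ell$ of $u$ at $x_0$, translate $x_0$ to the origin and rotate so that, locally near $0$, one has $\Omega=\{x_n>g(x')\}$ with $g(0)=0$, $\nabla g(0)=0$. Then $\p\Omega\in C^{1,1}(x_0)$ gives $|g(x')|\le K|x'|^2$, and $\varphi\in C^{1,1}(x_0)$ together with the quadratic separation gives, on $\p\Omega$ near $0$,
$$\mu|x|^2\le u(x)=\varphi(x)\le\mu^{-1}|x|^2;$$
in particular $u\ge0$ and $u(0)=0$. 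Since $f\le\Lambda$, Caffarelli's strict convexity theorem applies, and because the contact set of $\ell$ with $u$ can touch $\p\Omega$ only at $x_0$ it reduces to $\{x_0\}$; hence $u>0$ on $\overline\Omega\cap\overline B_\rho\setminus\{0\}$ and, for all small $h$, the section $S_h:=S_h(x_0)$ is compactly contained in $B_\rho$, so only the local data matters.

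\emph{Step 2 (rough bounds).} Next I would record the crude bounds that fix the order of magnitude of $S_h$: comparison with the explicit convex barrier $\mu'|x'|^2+c\,x_n^2$ (with $\mu'<\mu$ and $c$ chosen so that its Monge--Ampère measure equals $\Lambda$ while the barrier stays below $\varphi$ on $\p\Omega\cap B_\rho$ and below the positive lower bound of $u$ on the cap) yields $S_h\subset\{\mu'|x'|^2+c\,x_n^2<h\}$, hence $S_h\subset B_{C\sqrt h}\cap\overline\Omega$ and $|S_h|\le Ch^{n/2}$; an inner paraboloid barrier (Monge--Ampère measure $\lambda$) gives a slab $\{|x'|<c\sqrt h,\ 0\le x_n<ch\}\subset S_h$. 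So $S_h$ is already sandwiched between a thin slab and a half-ball of radius $\sim\sqrt h$, and it remains to pin down its shape in between.

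\emph{Step 3 (scaling iteration).} Along a geometric sequence $h_k=2^{-k}h_0$ I would construct linear maps $A_k$ fixing the hyperplane $\{x_n=0\}$ — compositions of rotations in the $x'$ variables, sliding maps $x\mapsto x-\tau x_n$ with $\tau\in\R^{n-1}$, and $\{x_n=0\}$-preserving dilations, all with $\det A_k=1$ — together with radii $r_k\sim h_k^{1/2}$, so that the rescaled section $r_k^{-1}A_kS_{h_k}$ is \emph{balanced}: comparable to $B_1\cap\{x_n>g_k(x')\}$ with $g_k$ again trapped between fixed paraboloids and the boundary data again trapped between $\mu_*|x'|^2$ and $\mu_*^{-1}|x'|^2$. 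The rescaling $\tilde u_k(y)=h_k^{-1}u\big(A_k^{-1}(r_ky)\big)$ again solves a Monge--Ampère equation with right-hand side in $[\lambda_*,\Lambda_*]$, since $\det A_k=1$; so interior estimates of Pogorelov/Caffarelli type control $\tilde u_k$ in the part of the balanced section away from the floor, while the quadratic trapping of $\varphi$ controls it near the floor. Comparing the normalized pictures at heights $h_k$ and $h_{k+1}$ then shows $S_{h_{k+1}}$ is again balanced after one further linear correction $P_k$ with $\|P_k-I\|,\|P_k^{-1}-I\|\le C$. Setting $A_{k+1}=P_kA_k$ and interpolating in $h$ yields an ellipsoid $E_h:=A_h^{-1}B_{r_h}$ of volume $r_h^n\sim h^{n/2}$ with $c\,E_h\cap\overline\Omega\subset S_h\subset C\,E_h\cap\overline\Omega$ — the intersection with $\overline\Omega$ being needed because $E_h$ may extend below the floor $\{x_n=g(x')\}$.

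\emph{Main obstacle.} The difficulty is entirely in Step 3: a priori the corrections $P_k$ are only bounded, so $\|A_k\|$ could grow geometrically and $E_{h_k}$ collapse. One must show the corrections accumulate slowly — equivalently, that the eccentricity of $S_h$ grows at most like a power of $|\log h|$. The two mechanisms that force this are the volume constraint $\det A_k=1$ (so stretching in one direction must be paid for by compression in another and cannot be sustained over many scales) and the quadratic trapping of $\varphi$ (which forbids tangential directions from thinning out). Technically this requires a dichotomy — either the rescaled section is already balanced and one iterates directly, or it is elongated/sheared and a single rescaling improves a quantitative eccentricity functional by a definite amount — together with a barrier argument that controls the sliding parameters $\tau_k$ via the $C^{1,1}$ bound on $\p\Omega$. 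This is the part that cannot invoke the interior theory as a black box, and it is precisely the content of the localization result \cite{S}.
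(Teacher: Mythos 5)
Your Step 1 and the general shape of the plan (normalize, build a sliding affine map $A_h$ that balances $S_h$, then iterate) are in the spirit of the paper's Sections 4--5, but there are two substantive problems.

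\textbf{Step 2 overclaims.} The barrier $\mu'|x'|^2 + c\,x_n^2$ is not a subsolution that lies below $u$ on all of $\partial(\Omega\cap\{x_n\le\rho\})$: on the slice $\Omega\cap\{x_n=\rho\}$ we only know $u\ge 0$, so one must subtract a linear term $C(\rho)x_n$ to push the barrier down there. With this correction the conclusion is only a one-sided containment of the form $S_h\cap\{x_n\le\rho\}\subset\{x_n>c(\rho)(\mu|x'|^2-h)\}$ (as in Proposition~\ref{TW}), which bounds $S_h$ \emph{above} a paraboloid; it does \emph{not} give $S_h\subset B_{C\sqrt h}$. Indeed $S_h\subset B_{C\sqrt h}$ with $C$ independent of $h$ would be strictly stronger than the theorem, which allows the shear $|\nu_h|\lesssim|\log h|$ and hence an ellipsoid $E_h$ elongated by a factor $|\log h|$ in a tangential direction. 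The volume bound $|S_h|\le Ch^{n/2}$ is true, but it comes from Alexandrov's estimate (as in \eqref{s_h_upbd}), not from this barrier. Also, your inner slab $\{0\le x_n<ch\}\subset S_h$ only controls $d_n\gtrsim h$, while the whole theorem hinges on $d_n\gtrsim h^{1/2}$ (Lemma~\ref{l1}); the gap between $h$ and $h^{1/2}$ is exactly the hard content and cannot be dismissed as a ``rough bound''.

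\textbf{Step 3 names the obstacle but does not surmount it; the deferral to \cite{S} is circular.} The present theorem \emph{is} the localization result of \cite{S}, which the paper reproves from scratch; one cannot cite it as a black box to close the argument. The paper's actual mechanism is different from the ``per-step improvement of an eccentricity functional'' you sketch: it introduces $b(h)=h^{-1/2}\sup_{S_h}x_n$, proves a doubling lemma (Lemma~\ref{l2}: if $b(h)$ is below a critical threshold then $b(th)/b(h)>2$ for some $t\in[c_0,1]$), and proves \emph{that} by a compactness-and-induction argument (Propositions~\ref{HDprop} and \ref{HDprop1}): normalize $S_h$ to unit size, pass to limit configurations in the classes $\mathcal D^\mu_0(a_1,\dots,a_k,\infty,\dots,\infty)$ as $\sigma\to 0$ and some $a_i\to\infty$, and induct on the number of finite $a_i$. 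The inductive step, plus Lemma~\ref{ball}, reduce matters to Lemma~\ref{lastlem}, a barrier argument that iteratively improves a cone lower bound $u\ge p(|z|-qx_n)$ by decreasing $q$, eventually forcing $u\ge\varepsilon(|z|+x_n)$ and contradicting the volume lower bound on sections. None of this quantitative structure is present in your sketch; without it the iteration in Step 3 has no mechanism to prevent $\|A_k\|$ from blowing up geometrically, as you yourself note.

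\end{document}
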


Theorem \ref{loc_thm} is an extension up to the boundary of the localization theorem at interior points due to Caffarelli in \cite{C1}. For completeness we provide also its proof in the current paper.

The paper is organized as follows. In section 2 we discuss briefly the compactness of solutions to the Monge-Ampere equation which we use later in the paper (see Theorem \ref{comp}). For this we need to consider also solutions with possible discontinuities at the boundary. In section 3 we give a quantitative version of the Localization Theorem (see Theorem \ref{main_loc}). In sections 4 and 5 we provide the proof of Theorem \ref{main_loc}. In section 6 we obtain a version of the classical Pogorelov estimate in half-domain (Theorem \ref{pe}). Finally, in section 7 we use the previous results together with a standard approximation method and prove our main theorem.

\section{Solutions with discontinuities on the boundary}

Let $u:\Omega \to \R$ be a convex function with $\Omega \subset \R^n$ bounded and convex. Denote by
$$U:=\{(x,x_{n+1}) \in \Omega \times \R | \quad x_{n+1} \ge u(x)\}$$ the upper graph of $u$.

\begin{defn}\label{bv1}
We define the values of $u$ on $\p \Omega$ to be equal to $\varphi$ i.e
$$u|_{\p \Omega}=\varphi,$$ if the upper graph of $\varphi:\p \Omega \to \R \cup\{\infty\}$
$$\Phi:=\{(x,x_{n+1}) \in \p \Omega \times \R | \quad x_{n+1} \ge \varphi(x)\}$$
is given by the closure of $U$ restricted to $\p \Omega \times \R$,
$$\Phi:=\bar U \cap (\p \Omega \times \R ).$$
\end{defn}

From the definition we see that $\varphi$ is {\it lower semicontinuous}.

\

If $u:\Omega \to \R$ is a viscosity solution to $$\det D^2u=f(x),$$ with $f\ge 0$ continuous and bounded on $\Omega$, then there exists an increasing sequence of subsolutions, continuous up to the boundary, $$u_n:\bar \Omega \to \R, \quad \quad \det D^2 u_n \ge f(x)$$ with $$\lim u_n=u \quad \mbox{in $\bar \Omega$},$$ where the values of $u$ on $\p \Omega$ are defined as above.

Indeed, let us assume for simplicity that $0 \in \Omega$, $u(0)=0$, $u \ge 0$. Then, on each ray from the origin $u$ is increasing, hence $v_\eps :\bar \Omega \to \R$, $$v_\eps(x)=u((1-\eps)x)$$ is an increasing family of continuous functions as $\eps \to 0$, with $$\lim v_\eps=u \quad \mbox{in $\bar \Omega$}.$$ In order to obtain a sequence of subsolutions we modify $v_\eps$ as $$u_\eps(x):=v_\eps(x)+w_\eps(x), $$ with $w_\eps \le -\eps$, convex, so that $$\det D^2w_\eps \ge |f(x)-(1-\eps)^{2n}f((1-\eps)x)|,$$ 
thus $$\det D^2u_\eps(x)=\det(D^2v_\eps +D^2w_\eps) \ge \det D^2 v_\eps +\det D^2w_\eps \ge f(x).$$ The claim is proved since as $\eps \to 0$  we can choose $w_\eps$ to converge uniformly to 0.

\begin{prop}[Comparison principle]\label{comp_prin}
Let $u$, $v$ be defined on $\Omega$ with $$\det D^2u \ge f(x) \ge \det D^2v$$ in the viscosity sense and $$u|_{\p \Omega} \le v|_{\p \Omega}.$$ Then $$u \le v \quad \mbox{in $\Omega$}.$$
\end{prop}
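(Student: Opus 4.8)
The plan is to reduce the statement, via the approximation by continuous subsolutions described just above, to the classical comparison principle for convex functions that are continuous up to the boundary. First I would recall that since $\det D^2 u \ge f(x)$ only in the viscosity sense and $u$ may be discontinuous on $\partial\Omega$, I cannot apply a textbook comparison result directly. Instead, I invoke the construction preceding the proposition: there is an increasing sequence $u_k : \bar\Omega \to \R$ of subsolutions, continuous up to the boundary, with $\det D^2 u_k \ge f(x)$ in $\Omega$ and $u_k \uparrow u$ pointwise on $\bar\Omega$, where the boundary values of $u$ are understood in the sense of Definition 2.1. By that same definition, the hypothesis $u|_{\partial\Omega} \le v|_{\partial\Omega}$ says that the closure of the upper graph of $u$, restricted to $\partial\Omega \times \R$, lies above the upper graph of $\varphi_v := v|_{\partial\Omega}$; equivalently, $\limsup_{x \to z} u(x) \le v(z)$ for every $z \in \partial\Omega$ (using that $v$ is, say, continuous up to the boundary, or at least lower semicontinuous with its graph closed).

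Next, for each fixed $k$ I would compare $u_k$ with $v$. Fix $\delta > 0$. Because $u_k$ is continuous on $\bar\Omega$ and $u_k \le u$ with $u|_{\partial\Omega} \le v|_{\partial\Omega}$ in the sense above, one gets $u_k \le v + \delta$ on a neighborhood of $\partial\Omega$ inside $\bar\Omega$ (here the semicontinuity built into Definition 2.1 is exactly what makes the boundary comparison survive the passage to the closure of the graph). Then on the remaining compact subset $\{u_k > v + \delta\} \Subset \Omega$ — if it is nonempty — I apply the classical comparison principle for continuous convex functions: $u_k$ is a subsolution and $v$ a supersolution of $\det D^2 w = f$, and $u_k \le v + \delta$ on the boundary of that open set, so $u_k \le v + \delta$ throughout it, contradicting its definition. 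Hence $\{u_k > v + \delta\} = \emptyset$, i.e. $u_k \le v + \delta$ on all of $\Omega$. Letting $\delta \to 0$ gives $u_k \le v$ in $\Omega$, and letting $k \to \infty$ gives $u \le v$ in $\Omega$.

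The technical heart — and the step I expect to be the main obstacle — is the boundary reduction: making precise, from Definition 2.1, that $u_k \le v + \delta$ near $\partial\Omega$. One must argue that for every $z \in \partial\Omega$ and every sequence $x_j \to z$ in $\bar\Omega$, $\limsup_j u_k(x_j) \le \limsup_j u(x_j) \le v(z)$, the last inequality being the content of $u|_{\partial\Omega} \le v|_{\partial\Omega}$ together with lower semicontinuity of the boundary trace of $v$; a compactness argument on $\partial\Omega$ then upgrades this to a uniform statement $u_k \le v + \delta$ on $\{x \in \bar\Omega : \operatorname{dist}(x, \partial\Omega) < \rho\}$ for some $\rho = \rho(k,\delta) > 0$. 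Once this is in hand, the interior step is the standard maximum-principle argument for the Monge–Ampère operator on convex functions (comparing the Monge–Ampère measures of $u_k$ and $v$ on the open set where $u_k$ would exceed $v+\delta$), which requires no strict convexity or regularity beyond continuity. A minor point to handle carefully is the case where $v$ is itself only assumed to have boundary values in the sense of Definition 2.1 rather than genuine continuity up to $\bar\Omega$; there one works with $v$ replaced by $\min(v, u_k)$ near the boundary or invokes the lower semicontinuity of $\varphi$ noted right after Definition 2.1.
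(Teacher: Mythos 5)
Your argument is correct and follows essentially the same route as the paper's: approximate $u$ from below by an increasing sequence of continuous subsolutions, use the lower semicontinuity of the boundary trace (Definition~\ref{bv1}) together with the continuity of $u_k$ to obtain $u_k < v + \delta$ in a neighborhood of $\p\Omega$, and then invoke the standard comparison principle on the remaining interior region. The paper compresses this into two sentences (reducing to $u$ continuous on $\bar\Omega$ with $u < v$ on $\p\Omega$, then noting $u < v$ near $\p\Omega$), while you make explicit the $\delta$-shift and the compactness argument on $\p\Omega$ that the paper leaves implicit — but the substance is identical.
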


\begin{proof}
Since $u$ can be approximated by a sequence of continuous functions on $\bar \Omega$ it suffices to prove the result in the case when $u$ is continuous on $\bar \Omega$ and $u<v$ on $\p \Omega$. Then, $u<v$ in a small neighborhood of $\p \Omega$ and the inequality follows from the standard comparison principle.

\end{proof}

A consequence of the comparison principle is that a solution $\det D^2 u =f$ is determined uniquely by its boundary values $u|_{\p \Omega}$.

Next we define the notion of convergence for functions which are defined on different domains.

\begin{defn}\label{conv_def}
a) Let $u_k:\Omega_k \to \R$ be a sequence of convex functions with $\Omega_k$ convex. We say that $u_k$ converges to $u:\Omega \to \R$ i.e $$u_k \to u  $$ if the upper graphs converge $$\bar U_k \to \bar U \quad \mbox{in the Haudorff distance.}$$ In particular it follows that $\bar \Omega_k \to \bar \Omega$ in the Hausdorff distance.

b) Let $\varphi_k:\p \Omega_k \to \R \cup \{ \infty\}$ be a sequence of lower semicontinuous functions. We say that $\varphi_k$ converges to $\varphi:\p \Omega \to \R \cup \{\infty\}$ i.e $$\varphi_k \to \varphi  $$ if the upper graphs converge $$ \Phi_k \to \Phi \quad \mbox{in the Haudorff distance.}$$

c) We say that $f_k: \Omega_k \to \R$ converge to $f: \Omega \to \R$ if $f_k$ are uniformly bounded and $$f_k \to f$$ uniformly on compact sets of $\Omega$.

\end{defn}

{\it Remark:} When we restrict the Hausdorff distance to the nonempty closed sets of a compact set we obtain a compact metric space. Thus, if $\Omega_k$, $u_k$ are uniformly bounded then we can always extract a convergent subsequence $u_{k_m} \to u$. Similarly, if $\Omega_k$, $\varphi_k$ are uniformly bounded we can extract a convergent subsequence $\varphi_{k_m} \to \varphi.$

\

\begin{prop}\label{con_env}
Let $u_k:\bar \Omega_k \to \R$ be continuous and  $$\det D^2u_k=f_k, \quad u_k=\varphi_k \quad \mbox{on $\p \Omega_k$}.$$ If $$u_k \to u, \quad \varphi_k \to \varphi, \quad f_k \to f,$$ then
\begin{equation}\label{DP1}
\det D^2u=f, \quad u=\varphi^* \quad \mbox{on $\p \Omega$},
 \end{equation}
 where $\varphi^*$ is the convex envelope of $\varphi$ on $\p \Omega$ i.e $\Phi^*$ is the restriction to $\p \Omega \times \R$ of the convex hull generated by $\Phi$.
\end{prop}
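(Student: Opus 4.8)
The plan is to pass to the limit separately in the interior equation and on the boundary, using the compactness of the upper graphs under Hausdorff convergence. For the interior equation I would argue by stability of viscosity solutions of the Monge-Ampère equation. Fix $x_0 \in \Omega$; since $\bar\Omega_k \to \bar\Omega$ in the Hausdorff distance, for $k$ large the point $x_0$ lies in $\Omega_k$ together with a fixed small ball. The convergence $\bar U_k \to \bar U$ forces $u_k \to u$ locally uniformly on compact subsets of $\Omega$ (local uniform convergence of convex functions is equivalent to Hausdorff convergence of their epigraphs once the domains stabilize). Then, because $f_k \to f$ uniformly on compact sets and $f$ is continuous, the standard closedness of the class of viscosity sub/supersolutions under uniform limits gives $\det D^2 u = f$ in $\Omega$ in the viscosity sense. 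Here one uses that a convex function touching $u$ from above (resp.\ below) at an interior point can be perturbed to touch $u_k$ at a nearby point, and that $\det D^2(\cdot)$ is continuous in its argument.

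Next I would identify the boundary values. By Definition \ref{bv1}, $u|_{\p\Omega}$ is the function whose upper graph is $\bar U \cap (\p\Omega \times \R)$, and similarly $\varphi_k$ has upper graph $\Phi_k = \bar U_k \cap (\p\Omega_k \times \R)$. The inclusion ``$\le$'' is the easy direction: each $u_k$ is a subsolution with boundary data $\varphi_k$, and $\varphi_k^{*} = \varphi_k$ (it is already convex, being the trace of a convex function — actually one only needs $\varphi_k \le$ its own convex envelope trivially), so by the comparison principle $u_k \le w_k$ where $w_k$ solves $\det D^2 w_k = f_k$ with $w_k = \varphi_k^{**}$... more directly: I compare $u_k$ to the solution with boundary data the limiting convex envelope. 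Passing to the limit, $\bar U \cap (\p\Omega\times\R)$ lies above $\Phi^{*}$, i.e.\ $u|_{\p\Omega} \ge \varphi^{*}$ is false in the wrong direction — let me instead phrase it as: the limit graph $\bar U$ is convex (being a Hausdorff limit of convex sets), contains $\Phi_k$'s limit $\Phi$ on the lateral boundary, hence contains the convex hull of $\Phi$, which gives $u|_{\p\Omega} \le \varphi^{*}$ pointwise... and conversely $u$ restricted to $\p\Omega$ cannot drop below $\varphi^{*}$ because $u$ is convex and bounded below by the convex envelope of its own boundary trace. Combining, $u|_{\p\Omega} = \varphi^{*}$.

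To make the boundary argument clean I would proceed as follows: (1) $\bar U$ is closed and convex, and since $\bar U_k \supset \Phi_k$ (as sets, up to the identification on the lateral part) and $\Phi_k \to \Phi$, we get $\bar U \supset \Phi$, hence $\bar U \supset \mathrm{conv}(\Phi)$, which yields $\bar U \cap (\p\Omega\times\R) \supset \Phi^{*}$, i.e.\ $u|_{\p\Omega} \le \varphi^{*}$. (2) For the reverse inequality, let $v$ be the Monge-Ampère solution in $\Omega$ with boundary data $\varphi^{*}$ (which exists and is continuous up to the boundary since $\Phi^{*}$ is convex and $f$ is bounded); by the comparison principle applied on $\Omega_k$ between $u_k$ and a suitable sub/supersolution built from $v$ and the uniform convergence $f_k \to f$, $\Omega_k \to \Omega$, $\varphi_k \to \varphi$, one obtains $u \ge v$ in $\Omega$; taking boundary traces gives $u|_{\p\Omega} \ge v|_{\p\Omega} = \varphi^{*}$. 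Together with (1), $u|_{\p\Omega} = \varphi^{*}$, and with the interior equation this is exactly \eqref{DP1}. The main obstacle I expect is step (2): carefully constructing comparison functions on the moving domains $\Omega_k$ so that the comparison principle of Proposition \ref{comp_prin} applies uniformly, in particular handling the fact that $\varphi_k$ need not be convex and $\varphi_k \to \varphi$ only through Hausdorff convergence of graphs (so pointwise control near $\p\Omega$ requires the semicontinuity built into Definition \ref{bv1}).
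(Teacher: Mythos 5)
Your interior-equation argument and your step (1) (from $\Phi_k\subset\bar U_k$ and Hausdorff convergence deduce $\Phi\subset\bar U$, hence $\mathrm{conv}(\Phi)\subset\bar U$ by convexity of $\bar U$, hence $\Phi^*\subset\bar U\cap(\p\Omega\times\R)$) are exactly the paper's. The gap is in step (2). You propose to solve the Monge--Amp\`ere problem on the limit domain $\Omega$ with boundary data $\varphi^*$ to get a comparison function $v$, and then transfer the comparison back to the moving domains $\Omega_k$ via unspecified sub/supersolutions built from $v$. You correctly flag this as the main obstacle, and as written it does not close: Hausdorff convergence $\Phi_k\to\Phi$ gives no pointwise lower bound on $\varphi_k$ near $\p\Omega_k$, so it is unclear how to arrange $v_k\le\varphi_k$ on $\p\Omega_k$ for some perturbation $v_k$ of $v$; and the continuity of $v$ up to $\p\Omega$, which you use when taking boundary traces, is not automatic on a merely convex (not strictly convex) domain. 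In short, step (2) needs a uniform-in-$k$ quantitative modulus of continuity of $u_k$ at $\p\Omega_k$, and your construction does not supply one.

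The paper gets this modulus from a much more elementary device. Take any affine function $l$ with $l\le\varphi$ on $\p\Omega$ (equivalently, the hyperplane $x_{n+1}=l(x)$ lies below the convex hull $K$ of $\Phi$). Then $\det D^2 l=0\le f_k$, and $u_k-l\ge 0$ on $\p\Omega_k$ for $k$ large, so the Alexandrov maximum principle gives $u_k-l\ge -C\,d_k^{1/n}$ in $\Omega_k$, where $d_k=\mathrm{dist}(\cdot,\p\Omega_k)$ and $C=C(n,\Lambda,\mathrm{diam})$ is independent of $k$. This scale-invariant boundary estimate passes to the Hausdorff limit, giving $u-l\ge -C\,d^{1/n}$ in $\Omega$ and hence $u\ge l$ on $\p\Omega$; taking the supremum over such $l$ yields $u|_{\p\Omega}\ge\varphi^*$. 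The lesson is that a linear barrier together with the Alexandrov estimate packages precisely the uniform boundary control that your $v$-comparison would need to manufacture; if you want to keep your structure, replace the auxiliary solution $v$ by the family of affine minorants of $\varphi^*$ and invoke Alexandrov.
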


{\it Remark:} If $\Omega$ is strictly convex then $\varphi^*=\varphi$.

\begin{proof}
Since $$\bar U_k \to \bar U, \quad \Phi_k \to \Phi, \quad \Phi_k \subset \bar U_k,$$ we see that $\Phi \subset \bar U$. Thus, if $K$ denotes the convex hull generated by $\Phi$, then $\Phi^* \subset K \subset \bar U$. It remains to show that $\bar U \cap (\p \Omega \times \R) \subset K.$

Indeed consider a hyperplane $$x_{n+1} = l(x)$$
which lies below $K.$ Then $$u_k - l \geq 0 \quad \text{on $\p \Omega_k$}$$ and by Alexandrov estimate we have that
$$u_k - l \geq -Cd_k^{1/n}$$ where $d_k$ represents the distance to $\p \Omega_k.$ By taking $k \rightarrow \infty$ we see that
$$u - l \geq -C d^{1/n}$$ thus no point on $\p \Omega \times \R$ below the hyperplane belongs to $\bar U.$

\end{proof}

Proposition \ref{con_env} says that given any $\varphi$ bounded and lower semicontinuous, and $f\ge0$ bounded and continuous we can always solve uniquely the Dirichlet problem

$$\left \{
\begin{array}{l}
\det D^2 u=f \quad \mbox{in $\Omega,$}\\
u=\varphi \quad \quad \quad \mbox {on $\p \Omega$}
\end{array}
\right.
$$
by approximation. Indeed, we can find sequences $\varphi_k$, $f_k$ of continuous, uniformly bounded functions defined on strictly convex domains $\Omega_k$ such that $\varphi_k \to \varphi$ and $f_k \to f$. Then the corresponding solutions $u_k$ are uniformly bounded and continuous up to the boundary. Using compactness and the proposition above we see that $u_k$ must converge to the unique solution $u$ in \eqref{DP1}.

We extend the Definition \ref{bv1} in order to allow a boundary data that is not necessarily convex.

\begin{defn}\label{bv2}
Let $\varphi:\p \Omega \to \R$ be a lower semicontinuous function. When we write that a convex function $u$ satisfies
$$u=\varphi \quad \mbox{on $\p \Omega$}$$ we understand $$u|_{\p \Omega}=\varphi^*$$ where $\varphi^*$ is the convex envelope of $\varphi$ on $\p \Omega$.
\end{defn}

Whenever $\varphi^*$ and $\varphi$ do not coincide we can think of the graph of $u$ as having a vertical part on $\p \Omega$ between $\varphi^*$ and $\varphi$.

It follows easily from the definition above that the boundary values of $u$ when we restrict to the domain $$\Omega_h:=\{u<h\}$$ are given by
$$\varphi_h=\varphi \quad \mbox{on}\quad \p \Omega \cap \{\varphi \le h\} \subset \p \Omega_h$$ and $\varphi_h=h$ on the remaining part of $\p \Omega_h$.

\

By Proposition \ref{comp_prin}, the comparison principle still holds. Precisely, if $$u=\varphi, \quad v=\psi, \quad \varphi \le \psi \quad \quad \mbox{on $\p \Omega$,}$$
$$ \det D^2 u \ge f \ge \det D^2 v \quad \mbox{in $\Omega$},$$
then $$u \le v \quad \mbox{in $\Omega$.}$$

The advantage of introducing the notation of Definition \ref{bv2} is that the boundary data is preserved under limits.

\begin{prop}\label{comp0}
Assume $$\det D^2 u_k=f_k, \quad u_k=\varphi_k \quad \mbox{on $\p \Omega_k$},$$ with $\Omega_k$, $\varphi_k$ uniformly bounded and $$\varphi_k \to \varphi, \quad f_k \to f.$$ Then $$u_k \to u$$ and $u$ satisfies $$\det D^2u=f, \quad u=\varphi \quad \mbox{on $\p \Omega$.}$$
\end{prop}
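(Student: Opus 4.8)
The strategy is to reduce the statement to Proposition \ref{con_env} by a diagonal approximation, and then to use uniqueness to upgrade subsequential convergence to convergence of the whole sequence.

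First I would record that the $u_k$ are uniformly bounded. By the comparison principle (Proposition \ref{comp_prin}), comparing $u_k$ with the constant function $\sup_k\norm{\varphi_k}_{L^\infty}$ — a supersolution since its Monge-Amp\`ere measure vanishes — gives a uniform upper bound, while the Alexandrov estimate, together with $\sup_k\norm{f_k}_{L^\infty}<\infty$ and the uniformly bounded diameters, gives a uniform lower bound; since the $\varphi_k$ are uniformly bounded, so are the $u_k$. By compactness of the Hausdorff distance we may then pass to a subsequence along which $u_k\to u$ for some convex $u\colon\Omega\to\R$, and by Definition \ref{conv_def} also $\bar\Omega_k\to\bar\Omega$. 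It remains to identify $u$, i.e. to show that $\det D^2u=f$ in $\Omega$ and that $u|_{\p\Omega}=\varphi^*$, which is the meaning of ``$u=\varphi$ on $\p\Omega$'' in the sense of Definition \ref{bv2}.

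By the construction preceding Definition \ref{bv2}, for each fixed $k$ the function $u_k$ is, by definition, the limit as $m\to\infty$ of functions $u_{k,m}\colon\bar\Omega_{k,m}\to\R$ that are continuous up to the boundary and solve $\det D^2u_{k,m}=f_{k,m}$ on a strictly convex domain $\Omega_{k,m}$, with $u_{k,m}=\varphi_{k,m}$ on $\p\Omega_{k,m}$, where $\varphi_{k,m}\to\varphi_k$ and $f_{k,m}\to f_k$ as $m\to\infty$. All the notions of convergence in Definition \ref{conv_def} are metrizable — the Hausdorff distance on the closed subsets of a fixed large ball, together with uniform convergence on an exhaustion of the limit domain by compact sets — so a standard diagonal argument produces indices $m(k)\to\infty$ for which the sequence $v_k:=u_{k,m(k)}$, which is continuous up to the boundary, still satisfies $v_k\to u$, while $\varphi_{k,m(k)}\to\varphi$ and $f_{k,m(k)}\to f$. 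Applying Proposition \ref{con_env} to $(v_k)$ we obtain $\det D^2u=f$ in $\Omega$ and $u|_{\p\Omega}=\varphi^*$, i.e. $u=\varphi$ on $\p\Omega$ in the sense of Definition \ref{bv2}.

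Finally, by the comparison principle in the form stated after Definition \ref{bv2}, a function satisfying $\det D^2u=f$ and $u=\varphi$ on $\p\Omega$ is uniquely determined; hence every subsequence of $(u_k)$ has a further subsequence converging to this same $u$, and so the full sequence converges, $u_k\to u$. Given Proposition \ref{con_env}, the only point that needs care is the diagonal extraction — one must choose the approximating families so that the diagonal sequence converges to the \emph{same} limit $u$ — and this is immediate from metrizability. One could also bypass Proposition \ref{con_env} and argue directly in the spirit of its proof: the inclusion $\bar U\cap(\p\Omega\times\R)\supseteq\Phi_{\varphi^*}$ follows because $u_k$ lies below the lower boundary $h_k$ of $\mathrm{conv}(\Phi_{\varphi_k})$ — indeed $h_k$ is the \emph{largest} convex function on $\bar\Omega_k$ that is $\le\varphi_k$ on $\p\Omega_k$, and $u_k$ is one such function — so $\bar U_k\supseteq\mathrm{conv}(\Phi_{\varphi_k})$, and passing to Hausdorff limits $\bar U\supseteq\mathrm{conv}(\Phi)$; for the reverse inclusion one separates a hypothetical point of $\bar U$ on $\p\Omega\times\R$ lying strictly below $\varphi^*$ from $\Phi$ by an affine function $\ell$, uses $\Phi_k\to\Phi$ to get $\ell-\eps\le\varphi_k$ on $\p\Omega_k$ for large $k$, and then applies the Alexandrov estimate with the uniform bound on $\norm{f_k}_{L^\infty}$ before letting $k\to\infty$. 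In that route this last separation-plus-Alexandrov step is the genuine analytic content and the place where I would expect the main work to lie.
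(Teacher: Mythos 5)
Your proposal is correct, and it has the same overall skeleton as the paper's proof: pass to a convergent subsequence, then establish the boundary identification by diagonalizing over an approximating family of solutions that are continuous up to the boundary and invoking Proposition~\ref{con_env}. The one place you take a different concrete route is in the choice of that family. You invoke the paper's remark that any generalized solution arises as a limit of continuous solutions $u_{k,m}$ on approximating strictly convex domains $\Omega_{k,m}$, and then diagonalize over $k$ and $m$. The paper instead builds the approximation intrinsically: for each $k$ it restricts $u_k$ itself to the shrunken subdomain $\tilde\Omega_k=\{x\in\Omega_k:\mathrm{dist}(x,\p\Omega_k)\ge\eps_k\}$, on which $u_k$ is automatically continuous up to the boundary; as $\eps_k\to0$ the boundary trace $\tilde\varphi_k$ tends to $\varphi_k^*$, and since $\varphi_k^*\to\varphi^*$ one can choose $\eps_k\to0$ so that the diagonal sequence $(\tilde u_k,\tilde\varphi_k,\tilde f_k)\to(u,\varphi^*,f)$. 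The paper's choice is a little more self-contained (no appeal to an external existence-by-approximation statement), but the two routes are interchangeable. Two things you make explicit that the paper leaves implicit are welcome: the uniform bound on $u_k$ (upper bound by comparison with a constant, lower by Alexandrov with the uniform $L^\infty$ bound on $f_k$ and uniform diameters, both of which are indeed part of the hypotheses via Definition~\ref{conv_def}(c)), and the upgrade from subsequential to full-sequence convergence via uniqueness from the comparison principle. Your concluding alternative, bypassing Proposition~\ref{con_env} by showing $\bar U_k\supseteq\mathrm{conv}(\Phi_{\varphi_k})$ via comparison with the convex envelope on one side and the separation-plus-Alexandrov argument on the other, is also sound and is essentially a direct replay of the proof of Proposition~\ref{con_env} itself.
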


\begin{proof} Using the compactness of solutions we may assume that $u_k$ converges to a limit $u$ and it remains to prove that $u=\varphi$ on $\p \Omega$.

Denote by $\tilde u_k$ the restriction of $u_k$ to the set $$\tilde \Omega_k=\{x \in \Omega_k| \quad dist(x, \p \Omega_k) \ge \eps_k\}.$$ Notice that for fixed $k$, as $\eps_k \to 0$ then $\tilde u_k \to u_k$ and $\tilde \varphi_k \to \varphi_k^*$. On the other hand, from the hypotheses we obtain that $\varphi_k^* \to \varphi^*$. Thus, we can choose a sequence of $\eps_k \to 0$ such that $$\tilde u_k \to u, \quad \tilde \varphi_k \to \varphi^*, \quad \tilde f_k \to f.$$ Now, since $\tilde u_k$ are continuous up to the boundary, the conclusion follows from Proposition \ref{con_env}.
\end{proof}

Finally, we state a version of the last proposition for solutions with bounded right-hand side i.e
$$ \lambda \le \det D^2u \le \Lambda,$$
where the two inequalities are understood in the viscosity sense.

\begin{thm}\label{comp}
Assume $$\lambda \le \det D^2u_k \le \Lambda, \quad u_k=\varphi_k \quad \mbox{on $\p \Omega_k$},$$ and $\Omega_k$, $\varphi_k$ uniformly bounded.

Then there exists a subsequence $k_m$ such that $$u_{k_m} \to u, \quad \varphi_{k_m} \to \varphi$$ with
$$\lambda \le \det D^2u \le \Lambda, \quad u=\varphi \quad \mbox{on $\p \Omega$}.$$

\end{thm}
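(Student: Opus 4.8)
The plan is to reduce to the compactness statement for solutions with convergent right--hand side (Proposition~\ref{comp0}): since the right--hand sides $f_k:=\det D^2 u_k$ are only trapped between $\lambda$ and $\Lambda$ and need not converge, I would squeeze each $u_k$ between two solutions having \emph{constant} right--hand sides $\lambda$ and $\Lambda$ and the same boundary data, apply Proposition~\ref{comp0} to those (constant sequences converge trivially), and then recover the two--sided bound for the limit $u$ separately.

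First, by the Remark after Definition~\ref{conv_def}, after passing to a subsequence I may assume $u_k\to u$, $\varphi_k\to\varphi$ and $\ov\Omega_k\to\ov\Omega$ in the Hausdorff distance; here $\Omega$ is a bounded convex set because the $\Omega_k$ are uniformly bounded, and from $\ov U_k\to\ov U$ one obtains $u_k\to u$ locally uniformly in $\Omega$. Next, for each $k$ let $\ov u_k,\underline u_k:\ov\Omega_k\to\R$ be the solutions, continuous up to the boundary, of
$$\det D^2\ov u_k=\lambda,\qquad \det D^2\underline u_k=\Lambda,\qquad \ov u_k=\underline u_k=\varphi_k \ \text{ on } \ \p\Omega_k.$$
By the comparison principle (Proposition~\ref{comp_prin}, in the form valid under Definition~\ref{bv2}), $\underline u_k\le u_k\le\ov u_k$ in $\Omega_k$. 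Applying Proposition~\ref{comp0} to $\ov u_k$ and to $\underline u_k$ yields, along a further subsequence, $\ov u_k\to\ov u$ and $\underline u_k\to\underline u$ with $\det D^2\ov u=\lambda$, $\det D^2\underline u=\Lambda$, and $\ov u=\underline u=\varphi$ on $\p\Omega$; in particular $\ov u|_{\p\Omega}=\underline u|_{\p\Omega}=\varphi^*$.

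The boundary values of $u$ are then pinned down: the inequalities $\underline u_k\le u_k\le\ov u_k$ say that the upper graphs satisfy $U_k^{\ov u}\subset U_k\subset U_k^{\underline u}$, which pass to the Hausdorff limit as $\ov U^{\,\ov u}\subset\ov U\subset\ov U^{\,\underline u}$; intersecting with $\p\Omega\times\R$ and using $\ov u|_{\p\Omega}=\underline u|_{\p\Omega}=\varphi^*$ forces $\ov U\cap(\p\Omega\times\R)=\Phi^*$, i.e. $u=\varphi$ on $\p\Omega$ in the sense of Definition~\ref{bv2}. For the interior bound $\lambda\le\det D^2u\le\Lambda$, I would use that for a convex function $v$ the viscosity inequalities $\lambda\le\det D^2 v\le\Lambda$ are equivalent to $\lambda\,dx\le\mu_v\le\Lambda\,dx$ for its Monge--Ampère measure $\mu_v$, together with the classical fact that $u_k\to u$ locally uniformly implies $\mu_{u_k}\rightharpoonup\mu_u$ weakly as measures. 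Passing to the limit in $\lambda\int\phi\,dx\le\int\phi\,d\mu_{u_k}\le\Lambda\int\phi\,dx$ for every nonnegative $\phi\in C_c(\Omega)$ gives $\lambda\,dx\le\mu_u\le\Lambda\,dx$, hence $\lambda\le\det D^2u\le\Lambda$. (Equivalently, one invokes the stability of viscosity sub- and supersolutions of the Monge--Ampère equation under local uniform limits of convex functions.)

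The step I expect to be the main obstacle is this last one: the barriers alone do \emph{not} yield $\lambda\le\det D^2u\le\Lambda$, since a convex function squeezed between $\underline u$ and $\ov u$ may fail to have Monge--Ampère density in $[\lambda,\Lambda]$ (it could, for instance, contain a flat piece). The two--sided bound survives the passage to the limit only because the Monge--Ampère measure is weakly-$*$ stable under local uniform convergence of convex functions; this is the genuine analytic input, whereas the barrier construction and the boundary identification through the graph inclusions are routine once it is available.
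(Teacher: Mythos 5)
The paper states Theorem~\ref{comp} without proof, as ``a version'' of Proposition~\ref{comp0}, so there is no argument in the text to compare yours against; I can only evaluate your proof on its own merits, and it is correct. The squeezing $\underline u_k\le u_k\le\ov u_k$ by solutions with constant right-hand sides $\Lambda$ and $\lambda$, followed by Proposition~\ref{comp0} for the two bracketing sequences, is a clean way to pin down the boundary trace of the limit: nestedness of upper graphs passes to the Hausdorff limit, and since both envelopes have trace $\varphi^*$ on $\p\Omega$, so does $u$. Equally important is your observation in the last paragraph, which is the genuine content of the theorem beyond Proposition~\ref{comp0}: the barriers alone say nothing about the Monge--Amp\`ere density of the limit, and one must invoke the weak-$*$ stability of the Monge--Amp\`ere measure under local uniform convergence of convex functions to pass $\lambda\,dx\le\mu_{u_k}\le\Lambda\,dx$ to the limit. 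Two very small points of precision: (i) you should record that $u_k$ are themselves uniformly bounded (they are, since $u_k\le\max\varphi_k$ by convexity and are bounded below by the Alexandrov estimate using $\det D^2u_k\le\Lambda$), which is what the paper's compactness remark actually requires; (ii) the envelopes $\ov u_k,\underline u_k$ produced by the paper's approximation scheme need not be continuous up to $\p\Omega_k$ in general, but this is harmless since Proposition~\ref{comp0} is formulated for boundary data in the sense of Definition~\ref{bv2} and does not need boundary continuity. Neither affects the validity of the argument.
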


\section{The Localization Theorem}

In this section we state the quantitative version of the localization theorem at boundary points (Theorem \ref{main_loc}).

Let $\Omega$ be a bounded convex set in $\R^n$. We assume that
\begin{equation}\label{om_ass}
B_\rho(\rho e_n) \subset \, \Omega \, \subset \{x_n \geq 0\} \cap B_{\frac 1\rho},
\end{equation}
for some small $\rho>0$, that is $\Omega \subset (\R^n)^+$ and $\Omega$ contains an interior ball tangent to $\p \Omega$ at $0.$

Let $u : \overline \Omega \rightarrow \R$ be continuous, convex, satisfying
\begin{equation}\label{eq_u}
\det D^2u =f, \quad \quad 0<\lambda \leq f \leq \Lambda \quad \text{in $\Omega$}.
\end{equation}
We extend $u$ to be $\infty$ outside $\overline \Omega.$

After subtracting a linear function we assume that
\begin{equation}\label{eq_u1}
\mbox{$x_{n+1}=0$ is the tangent plane to $u$ at $0$,}
\end{equation}
in the sense that $$u \geq 0, \quad u(0)=0,$$
and any hyperplane $x_{n+1}= \eps x_n$, $\eps>0$, is not a supporting plane for $u$.

We investigate the geometry of the sections of $u$ at $0$ that we denote for simplicity of notation
$$S_h := \{x \in \overline \Omega : \quad u(x) < h \}.$$

We show that if the boundary data has quadratic growth near $\{x_n=0\}$ then, as $h \rightarrow 0$, $S_h$ is equivalent to a half-ellipsoid centered at 0.

Precisely, our theorem reads as follows.

\begin{thm}[Localization Theorem]\label{main_loc} Assume that $\Omega$, $u$ satisfy \eqref{om_ass}-\eqref{eq_u1} above and for some $\mu>0$,
\begin{equation}\label{commentstar}\mu |x|^2 \leq u(x) \leq \mu^{-1} |x|^2 \quad \text{on $\p \Omega \cap \{x_n \leq \rho\}.$}\end{equation}
Then, for each $h<c(\rho)$ there exists an ellipsoid $E_h$ of volume $h^{n/2}$ such that
$$kE_h \cap \overline \Omega \, \subset \, S_h \, \subset \, k^{-1}E_h \cap \overline \Omega.$$

Moreover, the ellipsoid $E_h$ is obtained from the ball of radius $h^{1/2}$ by a
linear transformation $A_h^{-1}$ (sliding along the $x_n=0$ plane)
$$A_hE_h= h^{1/2}B_1$$
$$A_h(x) = x - \nu x_n, \quad \nu = (\nu_1, \nu_2, \ldots, \nu_{n-1}, 0), $$
with
$$ |\nu| \leq k^{-1} |\log h|.$$
The constant $k$ above depends on $\mu, \lambda, \Lambda, n$ and $c(\rho)$ depends also on $\rho$.
\end{thm}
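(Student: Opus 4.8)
The plan is, for each small $h>0$, to construct an affine map $T_h$ that normalizes the section $S_h$ and then to control how $T_h$ depends on $h$. Two features survive such a normalization and drive the whole argument. First, $0\in\partial S_h$ and $S_h\subset\{x_n\ge0\}$, so $T_h$ may be chosen to fix the origin and to map $\{x_n=0\}$ into itself; hence $T_h$ is a composition of a tangential linear map, a dilation in the $x_n$ direction, and a sliding along $\{x_n=0\}$. Second, by \eqref{commentstar} and the fact that $\partial\Omega$ is a $C^{1,1}$ graph over $\{x_n=0\}$ tangent to it at $0$, the trace of $S_h$ on $\partial\Omega$ satisfies, for $h<c(\rho)$,
$$\{x\in\partial\Omega:\ |x|<\sqrt{\mu h}\}\ \subset\ S_h\cap\partial\Omega\ \subset\ \{x\in\partial\Omega:\ |x|<\sqrt{h/\mu}\},$$
since $u$ is strictly positive on $\overline\Omega\setminus\{0\}$ (a convex function with $\det D^2u\ge\lambda>0$ cannot vanish on a segment interior to $\Omega$) and hence bounded below on $\partial\Omega\cap\{x_n\ge\rho\}$. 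Thus the base of $S_h$ on $\{x_n=0\}$ is comparable to a \emph{round} ball of radius of order $\sqrt h$, so the tangential part of $T_h$ is an isotropic dilation of order $h^{-1/2}$ and no rotation is needed.

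Next I would record the size of $S_h$ by barrier arguments. Comparing $u$ from above with a paraboloid of unit opening in the tangential variables plus a steep linear term in $x_n$, and from below with a solution of $\det D^2\cdot=\Lambda$ with the corresponding boundary data, and combining this with the Aleksandrov estimate and convexity, one gets $c\,h^{n/2}\le|S_h|\le C\,h^{n/2}$, a tangential extent of order $\sqrt h$, and a bound $\le C\sqrt h$ for the extent of $S_h$ in the $x_n$ direction. The matching lower bound --- that $S_h$ reaches height of order $\sqrt h$ --- is the step I expect to be the main obstacle, and I would prove it by contradiction and compactness: if along a sequence $h_j\to0$ the section $S_{h_j}$ were thinner than $\eps_j\sqrt{h_j}$ in $x_n$ with $\eps_j\to0$, then rescaling $S_{h_j}$ to unit size and invoking the compactness of solutions with boundary discontinuities (Theorem \ref{comp}) produces a limiting convex solution on a limiting domain that is independent of $x_n$, forcing $\det D^2=0$ and contradicting $\det D^2\ge\lambda>0$. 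The hypothesis that no plane $x_{n+1}=\eps x_n$ supports $u$ is precisely what keeps the origin anchored on the boundary and the limiting tangent plane horizontal under this rescaling, so that the limit does not degenerate in the tangential directions either.

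With the shape of $S_h$ pinned to that of a half-ball of radius of order $\sqrt h$ sheared along $\{x_n=0\}$, the remaining task is to bound the shear. I would iterate over the dyadic scales $h_k=2^{-k}$, $h_k<c(\rho)$, assuming inductively that $S_{h_k}$ is comparable to $A_{h_k}^{-1}(h_k^{1/2}B_1)$ with $A_{h_k}x=x-\nu_k x_n$. Rescaling $S_{h_k}$ to unit size turns $u$ into a solution $u_k$ with the same bounds $\lambda,\Lambda$ on a domain comparable to $B_1^+$, whose boundary data is again comparable to a quadratic --- the sliding $A_{h_k}$ carries the quadratic in \eqref{commentstar} into a comparable one as long as $|\nu_k|$ stays controlled, which is the running hypothesis. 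Applying the size estimates above to $u_k$ at level $1/2$, the section $\{u_k<1/2\}$ is comparable to the image of $B_1$ under a sliding composed with a diagonal map; the diagonal map is pinned because the tangential scale and the volume are geometric quantities whose per-step ratios \emph{telescope} over the iteration (their products down to scale $h_k$ are of order $\sqrt{h_k}$ and $h_k^{n/2}$ respectively, with constants depending only on $\mu,\lambda,\Lambda,n$), so no multiplicative blow-up occurs, whereas the per-step shear correction $\nu_{k+1}-\nu_k$ is merely bounded by a constant $C$, since the center of $\{u_k<1/2\}$ can drift only by its $O(1)$ diameter over its $O(1)$ height. Unwinding, $A_{h_{k+1}}A_{h_k}^{-1}$ is again a sliding, by the bounded vector $\nu_{k+1}-\nu_k$; summing the $\asymp|\log h|$ corrections from scale $O(1)$ down to $h$ gives $|\nu_h|\le C|\log h|$, and interpolating between consecutive dyadic levels --- across which $S_h$ changes only by a bounded factor --- yields the statement for all $h<c(\rho)$. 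A final rescaling normalizes the volume of $E_h$ to be exactly $h^{n/2}$, the fixed distortion being absorbed into the constant $k$.
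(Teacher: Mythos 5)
Your outline captures the skeleton of the argument (barrier estimates, John normalization, compactness, dyadic telescoping for the shear), but two of the central steps contain genuine gaps.

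First, the claim that ``the base of $S_h$ on $\{x_n=0\}$ is comparable to a round ball of radius of order $\sqrt h$, so the tangential part of $T_h$ is an isotropic dilation of order $h^{-1/2}$'' is circular. The trace of $S_h$ on $\partial\Omega$ is indeed round of size $\sqrt h$, but this does not pin down the tangential shape of the full section. After a sliding that puts the center of mass on the $x_n$-axis, the John ellipsoid of $S_h$ is $D_h B_1$ with diagonal $D_h=\mathrm{diag}(d_1,\ldots,d_n)$, and the only facts available at this stage are $d_i\ge c h^{1/2}$ for $i<n$, $d_n\ge c(\rho)h^{n/(n+1)}$, and $\prod d_i\sim h^{n/2}$. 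This leaves ample room for the section to become \emph{anisotropic} tangentially (some $d_i$ large) while $d_n$ is small, all with a perfectly round base: the base sits at the $x_n$-extremity of the ellipsoid where the tangential cross-section degenerates. Establishing $d_i\sim h^{1/2}$ for \emph{all} $i$ is the content of the theorem, not an input. In particular, your list of barrier consequences mixes up which bounds are elementary; ``tangential extent $\le C\sqrt h$'' is equivalent, via the volume constraint, to the hard lower bound $d_n\ge c\sqrt h$.

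Second, and more seriously, the compactness argument for the height lower bound does not produce the contradiction you describe. If you rescale $S_{h_j}$ to unit size by the natural (John) normalization $A=\gamma D_{h_j}$ with $\det A=1$, the rescaled solution $v$ satisfies $\det D^2 v=f\in[\lambda,\Lambda]$ on a domain comparable to $B_1^+$; the determinant does \emph{not} go to zero. What degenerates is the boundary data: $\varphi_v\sim\sum a_i^2x_i^2$ on the base with $a_i=\gamma d_{h_j,i}$, and when $d_n$ is small the constraint $\prod a_i=1$ forces some $a_i\to\infty$, i.e.\ the zero set of the boundary data collapses in the corresponding tangential directions. The compactness limit is therefore a genuine solution in a class $\mathcal{D}_0^\mu(a_1,\ldots,a_l,\infty,\ldots,\infty)$ with bounded right-hand side, and ruling it out is not a matter of ``$\det D^2=0$ contradicts $\det D^2\ge\lambda$''. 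The paper eliminates these limits through an induction on the number $k$ of non-degenerate tangential directions (Proposition \ref{HDprop1}), whose engine is Lemma \ref{lastlem}: a self-improving barrier estimate showing $u\ge p(|z|-qx_n)$ can be upgraded until $S_h\subset\{x_n\le Ch\}$, which then contradicts $|S_h|\sim h^{n/2}$. Your one-step compactness sketch skips this entire mechanism, which is the heart of the proof.

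The remaining pieces — the volume estimate $|S_h|\sim h^{n/2}$ via barriers and Aleksandrov, and the $|\nu_h|\le C|\log h|$ bound by comparing consecutive dyadic sections $S_h$, $S_{h/2}$ so that per-step slidings are $O(1)$ and telescope — match the paper's Proposition \ref{TW} and the endgame of its proof, and are correctly reasoned.
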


 The ellipsoid $E_h$, or equivalently the linear map $A_h$, provides information about the behavior of the second derivatives near the origin. Heuristically, the theorem states that in $S_h$ the tangential second derivatives are bounded from above and below and the mixed second derivatives are bounded by $|\log h|$.

The hypothesis that $u$ is continuous up to the boundary is not necessary, we just need to require that \eqref{commentstar} holds in the sense of Definition \ref{bv2}.

Given only the boundary data $\varphi$ of $u$ on $\p \Omega$, it is not always easy to check the main assumption \eqref{commentstar} i.e that $\varphi$ separates quadratically on $\p \Omega$ (in a neighborhood of $\{x_n=0\}$) from the tangent plane at $0$. Proposition \ref{Prop1} provides some examples when this is satisfied depending on the local behavior of $\p \Omega$ and $\varphi$ (see also the remarks below).

\begin{prop}\label{Prop1}
Assume \eqref{om_ass},\eqref{eq_u} hold. Then \eqref{commentstar} is satisfied if any of the following holds:

\

\noindent
1) $\varphi$ is linear in a neighborhood of $0$ and $\Omega$ is uniformly convex at the origin.

\

\noindent
2) $\p \Omega$ is tangent of order 2 to $\{x_n=0\}$ and $\varphi$ has quadratic growth in a neighborhood of $\{x_n=0\}$.

\

\noindent
3) $\varphi$, $\p \Omega \in C^3(0)$, and $\Omega$ is uniformly convex at the origin.

\end{prop}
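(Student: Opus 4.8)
The plan is to verify the two-sided bound \eqref{commentstar} in each of the three cases by constructing explicit convex barriers and invoking the comparison principle (Proposition \ref{comp_prin}), combined with the normalization \eqref{eq_u1} which forces $u\ge 0$, $u(0)=0$, and excludes supporting planes of the form $x_{n+1}=\eps x_n$. Throughout, I work near the origin on $\p\Omega\cap\{x_n\le\rho\}$, where by \eqref{om_ass} the boundary is a Lipschitz graph $x_n=\gamma(x')$ with $\gamma\ge 0$, $\gamma(0)=0$, and the interior ball condition gives $\gamma(x')\le C|x'|^2/\rho$ near $0$. The upper bound $u(x)\le\mu^{-1}|x|^2$ will in each case follow by comparing $u$ from above with a multiple of the defining paraboloid of the interior tangent ball $B_\rho(\rho e_n)$, namely $w(x)=\frac{M}{\rho}(|x-\rho e_n|^2-\rho^2)=\frac{M}{\rho}(|x|^2-2\rho x_n)$: for $M$ large this is a subsolution with $w\le 0$ on $\p\Omega\cap\{x_n\le\rho\}$ and $w\le$ (the known bound for $u$) on the rest of $\p S_\rho$, so $u\le w^+\le CM|x|^2$ there. (One must also use the global a priori bound on $u$ coming from Alexandrov's estimate, and that on $\p\Omega\cap\{x_n\le\rho\}$ one has $x_n\le C|x'|^2$, hence $|x|^2\le C|x'|^2$.)

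For the lower bound $u\ge\mu|x|^2$, which is the substantive half, I treat the cases separately. In case (2), $\p\Omega$ tangent to second order to $\{x_n=0\}$ means $0\le\gamma(x')=o(|x'|)$, actually $\gamma(x')\le\eps|x'|$ for any prescribed $\eps$ near $0$; and $\varphi$ having quadratic growth means $\varphi(x)\ge\mu_0|x|^2$ on $\p\Omega$ near $0$. Here the lower barrier is essentially a thin paraboloid in the tangential variables: one checks $u\ge c\bigl(|x'|^2-Kx_n\bigr)$ near $0$ for suitable $c,K$, using that $u\ge 0$ everywhere and $u\ge\varphi\ge\mu_0|x'|^2$ on $\p\Omega$ (since $x_n\le\eps|x'|$ there makes $|x|^2$ comparable to $|x'|^2$), while the term $-Kx_n$ only helps on the interior part of the boundary of a small section; combined with $u\ge 0$ this yields $u\ge\mu|x|^2$. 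In case (1), $\varphi$ linear near $0$ is, after subtracting the tangent plane, $\varphi\equiv 0$ near $0$ on $\p\Omega$, and uniform convexity of $\Omega$ at $0$ gives $\gamma(x')\ge\rho_0|x'|^2$; the lower bound then comes from the fact that \eqref{eq_u1} rules out supporting planes $\eps x_n$, so $u$ cannot be too flat in the $x_n$ direction, and a standard Monge--Ampere barrier argument (as in the interior theory, cf.\ \cite{C1}) forces $u(t e_n)\ge ct^2$; tangentially, uniform convexity of $\Omega$ means points of $\p\Omega$ with $|x'|$ fixed have $x_n\ge\rho_0|x'|^2$, so control in the $e_n$ direction transfers to control of $u(x)$ in terms of $|x|^2$ via convexity and $\varphi\equiv 0$. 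Case (3) reduces to case (1): since $\varphi,\p\Omega\in C^3(0)$, after subtracting from $u$ a suitable quadratic polynomial (which changes $\det D^2u$ only by a bounded factor and preserves \eqref{eq_u}) we may assume $\varphi$ vanishes to third order and $\p\Omega$ agrees to third order with a uniformly convex paraboloid, and the remaining cubic errors are absorbed into the barriers of case (1) at the expense of constants, using $h<c(\rho)$ small.

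The main obstacle I anticipate is the lower bound in case (1)/(3): obtaining $u\ge\mu|x|^2$ genuinely uses the hypothesis \eqref{eq_u1} that no plane $x_{n+1}=\eps x_n$ supports $u$ — without it one could have $u$ vanishing linearly into the domain. Quantifying this non-degeneracy into a clean constant $\mu$ depending only on $\rho,\lambda,\Lambda,n$ requires a compactness/normalization argument: rescale $\Omega$ to unit size, note that the family of such $u$ is precompact by Theorem \ref{comp}, and observe that any limit still satisfies \eqref{eq_u1} and $\varphi^*\equiv 0$ near $0$, so by strict convexity of solutions with bounded right-hand side the limit separates quadratically; a contradiction argument then gives the uniform $\mu$. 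The bookkeeping of how the $C^3$ errors in case (3) interact with this compactness step — ensuring they vanish in the rescaled limit — is the delicate part, but it is routine once the barrier in case (1) is in place.
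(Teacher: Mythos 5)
Your proposal gets the broad strokes right (barriers, comparison, use of \eqref{eq_u1}), but it breaks down in cases (1) and (3), and the paper's argument is substantively different.

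In case (1), you assert that ``after subtracting the tangent plane, $\varphi\equiv 0$ near $0$.'' That cannot be: if both the boundary data and the tangent plane at $0$ vanished near $0$, then $u|_{\p\Omega}\equiv 0$ near $0$ and \eqref{commentstar} would be false, so the claim is self-defeating. The correct observation --- and the whole content of case (1) --- is that when $\varphi\equiv 0$ near $0$, the tangent plane of $u$ at $0$ is $x_{n+1}=-\mu x_n$ with $\mu>0$ bounded above \emph{and} below. A lower barrier (using $\det D^2u\le\Lambda$ and uniform convexity) gives the upper bound on $\mu$, while an upper barrier (using $\det D^2u\ge\lambda$) forces $u$ to dip linearly into the domain and gives $\mu>0$. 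With that in hand, the normalization \eqref{eq_u1} turns the boundary data near $0$ into $\mu x_n$, and uniform convexity of $\p\Omega$ at $0$ gives $x_n\sim|x'|^2\sim|x|^2$ there; \eqref{commentstar} follows. Your compactness alternative is not wrong in spirit, but the contradiction it would produce in the limit is exactly this barrier fact in disguise (a limit with zero boundary data and zero normal slope would contain an $n$-dimensional flat set, contradicting $\det D^2u\ge\lambda$), so it is more indirect, not simpler.

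The more serious gap is case (3). Subtracting a quadratic polynomial $P$ from $u$ does not change $\det D^2u$ by a bounded factor --- $\det(D^2u-D^2P)$ bears no simple relation to $\det D^2u$, and $u-P$ need not even be convex --- so the proposed reduction to case (1) is unavailable. More importantly, that reduction assumes away exactly what has to be proved, namely that the quadratic part of $\varphi$ (after subtracting the tangent plane $l_0$ of $u$) is nondegenerate. The paper's argument here is the one genuinely nontrivial step of the proposition and is of a completely different nature. After subtracting $l_0$ one writes $\varphi=Q_0(x')+o(|x'|^3)$ with $Q_0$ cubic; nonnegativity of $\varphi$ forces the quadratic part to be $\sum_i\frac{\mu_i}{2}x_i^2$ with $\mu_i\ge 0$; if some $\mu_1=0$, nonnegativity further forces the $x_1^3$-coefficient of $Q_0$ to vanish, so $\{\varphi<h\}\cap\p\Omega$ stretches to $|x_1|\lesssim r(h)h^{1/3}$ with $r(h)\to\infty$, and by uniform convexity the convex hull of this boundary set has volume $\gtrsim r(h)^3h^{n/2}$, contradicting the universal bound $|S_h|\le Ch^{n/2}$ coming from $\det D^2u\ge\lambda$. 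This cubic-coefficient and section-volume mechanism --- which is also why the $C^3$ hypothesis is optimal, cf.\ \cite{W} --- is entirely missing from your proposal.
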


Proposition \ref{Prop1} is standard (see \cite{CNS}, \cite{W}). We sketch its proof below.

\begin{proof}
1) Assume $\varphi=0$ in a neighborhood of $0$. By the use of standard barriers, the assumptions on $\Omega$ imply that the tangent plane at the origin is given by
$$x_{n+1}=-\mu x_n$$
for some bounded $\mu>0$. Then \eqref{commentstar} clearly holds.

2) After subtracting a linear function we may assume that $$ \mu|x'|^2 \le \varphi \le \mu^{-1}|x'|^2$$ on $\p \Omega$ in a neighborhood of $\{x_n=0\}$. Using a barrier we obtain that $l_0$, the tangent plane at the origin, has bounded slope. But $\p \Omega$ is tangent of order 2 to $\{x_n=0\}$, thus $l_0$ grows less than quadratic on $\p \Omega$ in a neighborhood of $\{x_n=0\}$ and \eqref{commentstar} is again satisfied.

3) Since $\Omega$ is uniformly convex at the origin, we can use barriers and obtain that $l_0$ has bounded slope. After subtracting this linear function we may assume $l_0=0$. Since $\varphi$, $\p \Omega \in C^3(0)$ we find that
$$\varphi=Q_0(x') + o(|x'|^3)$$ with $Q_0$ a cubic polynomial. Now $\varphi \ge 0$, hence $Q_0$ has no linear part and its quadratic part is given by, say
$$\sum_{i<n} \frac {\mu_i}{2} x_i^2  , \quad \mbox{with} \quad \mu_i \ge 0.$$
We need to show that $\mu_i>0$.

If $\mu_1=0$, then the coefficient of $x_1^3$ is $0$ in $Q_0$. Thus, if we restrict to $\p \Omega$ in a small neighborhood near the origin, then for all small $h$ the set $\{\varphi <h \}$ contains $$ \{|x_1| \le r(h)h^{1/3}\} \cap \{ |x'| \le c h^{1/2} \} $$
for some $c>0$ and with $$r(h) \to \infty \quad \mbox{as $h \to 0$}.$$ Now $S_h$ contains the convex set generated by $\{\varphi <h\}$ thus, since $\Omega$ is uniformly convex,
$$|S_h| \ge c'(r(h)h^{1/3})^3 h^{(n-2)/2} \ge c' r(h)^3 h^{n/2}.$$
On the other hand, since $u$ satisfies \eqref{eq_u} and $$0 \le u \le h \quad \mbox{in $S_h$}$$ we obtain (see \eqref{s_h_upbd}) $$|S_h| \le C h^{n/2},$$ for some $C$ depending on $\lambda$ and $n$, and we contradict the inequality above as $ h \to 0$.

\end{proof}

\begin{rem}\label{rem0}The proof easily implies that if $\p \Omega$, $\varphi \in C^3(\Omega)$ and $\Omega$ is uniformly convex,  then we can find a constant $\mu$ which satisfies \eqref{commentstar} for all $x \in \p \Omega$.
\end{rem}

\begin{rem}\label{rem1} From above we see that we can often verify \eqref{commentstar} in the case when $\varphi$, $\p \Omega \in C^{1,1}(0)$ and $\Omega$ is uniformly convex at $0$. Indeed, if $l_\varphi$ represents the tangent plane at $0$ to $\varphi:\p \Omega \to \R$ (in the sense of \eqref{eq_u1}), then \eqref{commentstar} holds if either $\varphi$ separates from $l_\varphi$ quadratically near $0$, or if $\varphi$ is tangent to $l_\varphi$ of order 3 in some tangential direction.
\end{rem}

\begin{rem}\label{rem2}
Given $\varphi$, $\p \Omega \in C^{1,1}(0)$ and $\Omega$ uniformly convex at $0$, then \eqref{commentstar} holds if $\lambda$ is sufficiently large.
\end{rem}

\section{Proof of Theorem \ref{main_loc} (I)}

We prove Theorem \ref{main_loc} in the next two sections. In this section we obtain some preliminary estimates and reduce the theorem to a statement about the rescalings of $u$. This statement is proved in section 5 using compactness.

Next proposition was proved by Trudinger and Wang in \cite{TW}. It states that the volume of $S_h$ is proportional to $h^{n/2}$ and after an affine transformation (of controlled norm) we may assume that the center of mass of $S_h$ lies on the $x_n$ axis. Since our setting is slightly different we provide its proof.

\begin{prop}\label{TW} Under the assumptions of Theorem \ref{main_loc}, for all $h \leq c(\rho),$ there exists a linear transformation (sliding along $x_n=0$) $$A_h(x) = x - \nu x_n,$$ with$$ \nu_n=0, \quad |\nu|\leq C(\rho) h^{-\frac{n}{2(n+1)}}$$ such that the rescaled function
$$\tilde u(A_h x) = u(x),$$ satisfies in $$\tilde S_h := A_h S_h = \{\tilde u<h\}$$ the following:
\begin{enumerate}
\item the center of mass of $\tilde S_h$ lies on the $x_n$-axis;
\item $$k_0 h^{n/2} \leq |\tilde S_h| = |S_h| \leq k_0^{-1} h^{n/2};$$
\item the part of $\p \tilde S_h$ where $\{\tilde u <h\}$ is a graph, denoted by $$\tilde G_h = \p \tilde S_h \cap \{\tilde u <h\} = \{(x', g_h(x'))\}$$ that satisfies $$g_h \leq C(\rho)|x'|^2$$ and $$\frac \mu 2 |x'|^2 \leq \tilde u \leq 2\mu^{-1} |x'^2| \quad \text{on $\tilde G_h$}.$$
\end{enumerate}

The constant $k_0$ above depends on $\mu, \lambda, \Lambda, n$ and the constants $C(\rho), c(\rho)$ depend also on $\rho$.
\end{prop}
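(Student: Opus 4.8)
The plan is to prove Proposition \ref{TW} by combining the Alexandrov maximum principle with John's lemma on the section $S_h$. First I would establish the volume bound $|S_h| \le C h^{n/2}$, which is elementary: since $u \ge 0$, $u(0)=0$, and $x_{n+1}=0$ is the tangent plane at $0$, the function $u$ cannot be too flat. Using \eqref{commentstar}, $u$ grows quadratically along $\p\Omega \cap \{x_n \le \rho\}$, and since $\det D^2 u \ge \lambda$, the Alexandrov estimate applied to $u - h$ on $S_h$ forces $|S_h| \le C(\lambda, n) h^{n/2}$. This is the inequality referred to as \eqref{s_h_upbd} in the text. For the matching lower bound I would use $\det D^2 u \le \Lambda$ together with the fact that $S_h$ contains, by convexity, the set generated by the quadratic lower barrier on the boundary from \eqref{commentstar}; comparing $u$ with an explicit paraboloid of opening comparable to $\mu$ shows $S_h$ contains a half-ellipsoid of volume $\ge c h^{n/2}$.

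Next I would normalize $S_h$ by John's lemma: there is an ellipsoid $\mathcal{E}$ with $\mathcal{E} \subset S_h \subset n\mathcal{E}$ (the factor improved because $S_h$ is "half" of a convex body touching $\{x_n=0\}$ — more precisely one uses the John ellipsoid of $S_h$ and the fact that $0 \in \partial S_h$ with a flat boundary piece nearby). The key geometric point is that because $\Omega$ contains the interior ball $B_\rho(\rho e_n)$ and $u$ separates quadratically on $\partial\Omega$, the section $S_h$ is squeezed between $\{x_n = 0\}$ and a region of height $O(h^{1/2})$ in the tangential directions, so the John ellipsoid, after a volume-preserving linear map, can be taken to have its axes controlled. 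The linear map that sends the John ellipsoid of $S_h$ to a ball decomposes; the component that matters is a shear $A_h(x) = x - \nu x_n$ sliding along $x_n = 0$ (the $x_n$-direction and tangential-volume are pinned by the boundary geometry, so no genuine dilation in those directions is needed at this stage). After applying $A_h$ we may assume the center of mass of $\tilde S_h$ lies on the $x_n$-axis — this is achieved by choosing $\nu$ to kill the tangential components of the barycenter, and the bound $|\nu| \le C(\rho) h^{-n/(2(n+1))}$ comes from estimating how far off-axis the barycenter of a convex set of volume $\sim h^{n/2}$ trapped in $B_{1/\rho}$ can be, divided by its $x_n$-extent, which is at least $\sim h^{(n+1)/2}$ by the volume bound (height times tangential cross-section $\le$ volume).

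Finally, the boundary-graph statement (3): the portion of $\partial S_h$ lying strictly inside $\Omega$ is the level set $\{u = h\}$, while the remaining portion lies on $\partial\Omega$. Near $0$, $\partial\Omega$ is a graph $x_n = $ (convex function of $x'$) with $\partial\Omega \subset \{x_n \ge 0\} \cap B_{1/\rho}$ and containing the tangent ball, which bounds its curvature from one side; transferring this through $A_h$ and using that the shear fixes the $\{x_n=0\}$ tangent plane gives $g_h(x') \le C(\rho)|x'|^2$ on the graph part $\tilde G_h$. The two-sided bound $\frac{\mu}{2}|x'|^2 \le \tilde u \le 2\mu^{-1}|x'|^2$ on $\tilde G_h$ is just \eqref{commentstar} pulled back: for $h$ small, $S_h$ is contained in $\{x_n \le \rho\}$, so on $\partial\Omega \cap \partial S_h$ the original estimate applies, and the distortion by $A_h$ (which fixes $x'$ up to the already-controlled shear and equals the identity on $\{x_n=0\}$) changes the constants by at most a factor of $2$ once $h$ is small enough that $|\nu| x_n$ is a lower-order perturbation. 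I expect the main obstacle to be the quantitative bookkeeping in step two: showing that the John-ellipsoid normalization can be realized purely by a shear $A_h$ of the stated form with the stated bound on $|\nu|$, rather than by a general linear map, and verifying that this choice simultaneously puts the center of mass on the axis — this is where the interior-ball hypothesis \eqref{om_ass} and the quadratic separation \eqref{commentstar} must be used together rather than separately.
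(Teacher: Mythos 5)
Your outline gets the upper volume bound and the bookkeeping for the shear $A_h$ roughly right, but there is a genuine and central gap: you never actually use the hypothesis that $x_{n+1}=0$ is the \emph{tangent} plane at $0$ in the sense of \eqref{eq_u1} (i.e., that no hyperplane $x_{n+1}=\eps x_n$, $\eps>0$, supports $u$). In the paper's proof, this assumption carries the entire weight of both the lower volume bound and the bound \eqref{2star} on the $x_n$-coordinate of the barycenter. Both are established by contradiction with an explicit lower barrier of the form $w=\eps x_n+(\text{anisotropic quadratic})$: if $|S_h|$ (equivalently $\prod d_i$) were too small, or if $x_h^*\cdot e_n< c_0(\rho)h^{n/(n+1)}$, then $w$ would be a lower barrier forcing $u\ge \eps x_n$, which contradicts \eqref{eq_u1}. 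Without this mechanism, the section $S_h$ could in principle be extremely flat in the $x_n$-direction, and all the stated conclusions would fail.

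Your proposed route to the lower volume bound does not work as written. Taking ``the set generated by the quadratic lower barrier on the boundary'' --- i.e., the convex hull of $\p\Omega\cap\{u<h\}$, which by \eqref{commentstar} sits in a parabolic cap of tangential radius $\sim h^{1/2}$ and thickness $\sim h$ --- only produces a set of volume $\sim h^{(n+1)/2}$, which is much smaller than $h^{n/2}$. To inflate this to $h^{n/2}$ you would need an interior point of $S_h$ at height $x_n\sim h^{1/2}$, but that is precisely the hard content of Lemma \ref{l1} that the Localization Theorem eventually provides; it cannot be assumed here. The alternative you mention, comparison against a round paraboloid of opening $\sim\mu^{-1}$, also fails directly: for such a paraboloid to be an upper barrier you need its Monge--Amp\`ere measure $\le\lambda$, which is generally incompatible with the required boundary comparison at the scale of \eqref{commentstar}, and you would still have to control $u$ on the interior piece of $\p S_h$. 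Relatedly, your barycenter estimate quotes an $x_n$-extent $\gtrsim h^{(n+1)/2}$; even granting the volume lower bound, the correct exponent obtained by combining it with the confinement $S_h\subset\{|x'|\le C(\rho)(x_n+h)^{1/2}\}$ from \eqref{star} is $x_h^*\cdot e_n\gtrsim h^{n/(n+1)}$, and your weaker bound is insufficient to deduce $|\nu|\le C(\rho)h^{-n/(2(n+1))}$. Finally, the worry you flag about realizing the John normalization purely by a shear is a non-issue in the paper's argument: $A_h$ is chosen \emph{directly} as $\nu=x_h^{*'}/(x_h^*\cdot e_n)$ to center the barycenter, and John's lemma is applied separately afterwards to the already-sheared $\tilde S_h$.
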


\

 In this section we denote by $c$, $C$ positive constants that depend on $n$, $\mu$, $\lambda$, $\Lambda$. For simplicity of notation, their values may change from line to line whenever there is no possibility of confusion. Constants that depend also on $\rho$ are denote by $c(\rho)$, $C(\rho)$.

\begin{proof}
The function
$$v: = \mu |x'|^2 + \frac{\Lambda}{\mu^{n-1}} x_n^2 -C(\rho) x_n$$ is a lower barrier for $u$ in $\Omega \cap \{x_n \leq \rho\}$ if $C(\rho)$ is chosen large.

Indeed, then $$v \leq u \quad \text{on $\p \Omega \cap \{x_n \leq \rho\}$},$$

$$v \leq 0 \leq u \quad \text {on $\Omega \cap \{x_n=\rho\}$},$$
and
$$\det D^2 v > \Lambda.$$
In conclusion, $$v \leq u \quad \text{in $\Omega \cap \{x_n \leq \rho\}$},$$ hence
\begin{equation}\label{star} S_h \cap \{x_n \leq \rho\} \subset \{v <h\} \subset \{x_n > c(\rho)(\mu |x'|^2- h)\}.
\end{equation}

Let $x^*_h$ be the center of mass of $S_h.$ We claim that
\begin{equation}\label{2star}x^*_h \cdot e_n \geq c_0(\rho) h^{\alpha}, \quad \alpha= \frac{n}{n+1},\end{equation}
 for some small $c_0(\rho)>0$.

Otherwise, from \eqref{star} and John's lemma we obtain
$$S_h \subset \{x_n \leq C(n) c_0 h^{\alpha} \leq h^\alpha\} \cap \{|x'| \leq C_1 h^{\alpha/2}\},$$ for some large $C_1=C_1(\rho)$. Then the function
$$w= \eps x_n + \frac{h}{2} \left(\frac{|x'|}{C_1h^{\alpha/2}}\right)^2 + \Lambda C_1 ^{2(n-1)} h \left(\frac{x_n}{h^\alpha}\right)^2$$
is a lower barrier for $u$ in $S_h$ if $c_0$ is sufficiently small.

Indeed,
$$w \leq \frac h 4 + \frac h 2 + \Lambda C_1 ^{2(n-1)}(C(n)c_0 )^2 h < h \quad \text{in $S_h,$}$$
and for all small $h$,
$$w \leq \eps x_n + \frac{h^{1-\alpha}}{C_1 ^2} |x'|^2 + C(\rho)hc_0\frac{x_n}{h^\alpha} \leq \mu |x'|^2 \leq u \quad \text{on $\p \Omega$,}$$
and
$$\det D^2 w = 2\Lambda.$$

Hence $$w \leq u \quad \text{in $S_h$,}$$ and we contradict that 0 is the tangent plane at 0. Thus claim \eqref{2star} is proved.

Now, define $$A_h x = x - \nu x_n, \quad \nu = \frac{x^{*'}_h}{x_h^* \cdot e_n},$$
and
$$\tilde u(A_h x) = u(x).$$

The center of mass of $\tilde S_h=A_hS_h$ is $$\tilde x ^*_h=A_hx^*_h$$ and lies on the $x_n$-axis from the definition of $A_h$.
Moreover, since $x^*_h \in S_h$, we see from \eqref{star}-\eqref{2star} that
$$|\nu| \leq C(\rho) \frac{(x_h^*\cdot e_n)^{1/2}}{(x_h^*\cdot e_n)} \leq C(\rho) h^{-\alpha/2},$$
and this proves (i).

If we restrict the map $A_h$ on the set on $\p \Omega$ where $\{u < h\}$, i.e. on
$$\p S_h \cap \p \Omega \subset \{x_n \leq \frac{|x'|^2}{\rho}\} \cap \{|x'| < Ch^{1/2}\}$$
we have
$$|A_h x - x| = |\nu| x_n \leq C(\rho) h^{-\alpha/2} |x'|^2  \leq C(\rho) h^{\frac{1-\alpha}{2}} |x'|,$$
and part (iii) easily follows.

Next we prove (ii). From John's lemma, we know that after relabeling the $x'$ coordinates if necessary,
\begin{equation}\label{3star} D_h B_1 \subset \tilde S_h - \tilde x^*_h \subset C(n) D_h B_1\end{equation}where
\[
 D_h =
 \begin{pmatrix}
  d_1 & 0 & \cdots & 0 \\
  0 & d_{2} & \cdots & 0 \\
  \vdots  & \vdots  & \ddots & \vdots  \\
  0 & 0 & \cdots & d_{n}
 \end{pmatrix}.
\]

Since $$\tilde u \leq 2 \mu^{-1}|x'|^2 \quad \text{on $\tilde G_h = \{(x', g_h(x'))\}$},$$ we see that the domain of definition of $g_h$ contains a ball of radius $(\mu h/2)^{1/2}$. This implies that
$$d_i \geq c_1 h^{1/2}, \quad \quad i=1,\cdots, n-1,$$ for some $c_1$ depending only on $n$ and $\mu.$ Also from \eqref{2star} we see that $$\tilde x^*_h \cdot e_n =x^*_h \cdot e_n \ge c_0(\rho) h^\alpha$$ which gives $$d_n \ge c(n) \tilde x^*_h \cdot e_n \ge c(\rho) h^\alpha.$$

We claim that for all small $h$,
$$\prod_{i=1}^n d_i \geq k_0 h^{n/2},$$
with $k_0$ small depending only on $\mu, n, \Lambda,$ which gives the left inequality in (ii).

To this aim we consider the barrier,
$$w= \eps x_n + \sum_{i=1}^n ch\left(\frac{x_i}{d_i}\right)^2.$$ We choose $c$ sufficiently small depending on $\mu, n, \Lambda$ so that for all $h<c(\rho)$,
$$w \leq h \quad \text{on $\p \tilde S_h$,}$$
and on the part of the boundary $\tilde G_h$, we have $w \le \tilde u$ since
\begin{align*}w & \leq \eps x_n+\frac{c}{c_1^2}|x'|^2 + c h \left(\frac {x_n}{d_n}\right)^2 \\
 & \leq \frac \mu 4 |x'|^2 + c h C(n) \frac{x_n}{d_n} \\
 & \leq \frac \mu 4 |x'|^2 + c h^{1-\alpha}C(\rho)|x'|^2\\
 &\le \frac \mu 2 |x'|^2.
 \end{align*}
Moreover, if our claim does not hold, then
$$\det D^2 w = (2c h )^n (\prod d_i)^{-2n} > \Lambda,$$ thus $w \le \tilde u$ in $\tilde S_h$. By definition, $\tilde u$ is obtained from $u$ by a sliding along $x_n=0$, hence $0$ is still the tangent plane of $\tilde u$ at $0$. We reach again a contradiction since $\tilde u \ge w\ge \eps x_n$ and the claim is proved.

Finally we show that
\begin{equation}\label{s_h_upbd}
|\tilde S_h| \leq Ch^{n/2}
 \end{equation}
 for some $C$ depending only on $\lambda, n.$ Indeed, if $$v=h \quad \text{on $\p \tilde S_h$},$$ and $$\det D^2v= \lambda$$ then
$$v \geq u \geq 0 \quad \text{in $\tilde S_h$.}$$ Since
$$h \geq h-\min_{\tilde S_h} v \geq c(n,\lambda) |\tilde S_h|^{2/n}$$ we obtain the desired conclusion.

\end{proof}

\

In the proof above we showed that for all $h \leq c(\rho),$ the entries of the diagonal matrix $D_h$ from \eqref{3star} satisfy
$$d_i \geq c h^{1/2}, \quad i=1,\ldots n-1$$

$$d_n \geq c(\rho)h^{\alpha}, \quad \alpha= \frac{n}{n+1}$$

$$c h^{n/2} \leq \prod d_i \leq Ch^{n/2}.$$

The main step in the proof of Theorem \ref{main} is the following lemma that will be completed in Section 5.

\begin{lem}\label{l1}
There exist constants $c$, $c(\rho)$ such that
\begin{equation}\label{dn}d_n \geq ch^{1/2},\end{equation}
for all $h \le c(\rho)$.
\end{lem}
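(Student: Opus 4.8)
The plan is to argue by compactness and contradiction, within the solutions‑with‑boundary‑discontinuities framework developed in Section 2.

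\textbf{Reduction.} By Proposition \ref{TW} and the estimates following it, $d_i \ge c h^{1/2}$ for $i<n$, $\prod_i d_i \le C h^{n/2}$ (hence automatically $d_n \le C h^{1/2}$), and, since $0 \in \overline{\tilde S_h} \subset \{x_n \ge 0\}$ while $D_h B_1 \subset \tilde S_h - \tilde x^*_h \subset C(n) D_h B_1$, the center of mass obeys $\tilde x^*_h\cdot e_n \le C(n) d_n$. As $A_h$ preserves the $x_n$-coordinate, this says the $x_n$-extent of $S_h$ is comparable to $d_n$. Therefore $d_n \ge c h^{1/2}$ is equivalent to the assertion that the rescaled sections $h^{-1/2}S_h$ have $e_n$-thickness bounded below, uniformly in $h$.

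\textbf{Compactness.} Assume this fails: there are $h_k\to0$ with $d_n(h_k)=\eps_k h_k^{1/2}$, $\eps_k\to0$. Set $u_k(y):=h_k^{-1}u(h_k^{1/2}y)$ on $\Omega_k:=h_k^{-1/2}\Omega$. Then $\det D^2 u_k=f_k\in[\lambda,\Lambda]$, $u_k\ge0$, $u_k(0)=0$, the plane $\{x_{n+1}=0\}$ is tangent at $0$ with no plane $\{x_{n+1}=\eps x_n\}$ supporting, the $\Omega_k$ open up to $\{y_n>0\}$ with the boundary graph flattening onto $\{y_n=0\}$, the rescaled boundary data stays between $\mu|y'|^2$ and $\mu^{-1}|y'|^2$ by \eqref{commentstar}, and — the key point — $\{u_k<1\}=h_k^{-1/2}S_{h_k}$ has $e_n$-thickness $\asymp\eps_k\to0$. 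Since the far‑away boundary values of $u_k$ may run to $+\infty$, I would invoke Theorem \ref{comp} (together with the argument of Proposition \ref{con_env}) to extract a limit $u_\infty$: a convex function solving $\det D^2 u_\infty=f_\infty\in[\lambda,\Lambda]$ on the interior of its domain of finiteness, with boundary trace on $\{y_n=0\}$ squeezed between $\mu|y'|^2$ and $\mu^{-1}|y'|^2$, and whose unit section has collapsed onto the floor: $\{u_\infty<1\}\subset\{y_n=0\}$.

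\textbf{Contradiction.} On the floor, $u_\infty(y',0)\le\mu^{-1}|y'|^2<1$ for $|y'|<\mu^{1/2}$, so $\{u_\infty<1\}$ contains an $(n-1)$-disk of radius $\mu^{1/2}$ in $\{y_n=0\}$, whereas $u_\infty\ge1$ at every interior point with $y_n>0$. This is incompatible with $u_\infty$ being a limit of smooth solutions of $\det D^2=f_\infty\le\Lambda$: the graph of $u_\infty$ would have to rise vertically above the interior of that floor disk, placing infinite Monge–Ampère mass in a bounded region. (Quantitatively, at a floor point $p$ with $u_\infty(p)<1$, supporting planes of $u_\infty$ at points $(p',y_n)$ with $y_n\downarrow0$ have $e_n$-slope $\ge(1-u_\infty(p))/y_n\to\infty$, so $|\partial u_\infty(B_\eps(p))|=\infty$ for every $\eps>0$.) This contradiction proves the lemma.

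\textbf{Main obstacle.} The delicate step is the compactness argument. The rescaled domains expand to the unbounded half-space, and precisely in the regime being excluded the $u_k$ become infinitely steep in the $e_n$-direction, so the limit $u_\infty$ is genuinely singular; making precise in what sense $u_\infty$ solves the equation and identifying its boundary trace and its domain of finiteness is exactly what the Section 2 apparatus is designed to handle. I expect the actual proof to run this as a quantitative, finite-scale statement about the rescalings $u_k$ (the ``statement about the rescalings of $u$'' announced for Section 5) rather than through a literal limit, and that this is where the real work lies.
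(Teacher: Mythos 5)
The compactness strategy is the right instinct, but your specific rescaling does not produce a compact family, and this is exactly where the real work of the paper lies, not just a technicality you can wave at.

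Concretely: suppose $d_n(h_k)=\eps_k h_k^{1/2}$ with $\eps_k\to0$. From $\prod_i d_i\ge c\,h_k^{n/2}$ and $d_i\ge c\,h_k^{1/2}$ for $i<n$, some tangential $d_i$ must be of order $\eps_k^{-1}h_k^{1/2}$. Hence after your isotropic rescaling $u_k(y)=h_k^{-1}u(h_k^{1/2}y)$, the unit section $\{u_k<1\}=h_k^{-1/2}S_{h_k}$ is a slab of $y_n$-thickness $\sim\eps_k$ but $y'$-extent up to $\sim\eps_k^{-1}$: it is \emph{not} uniformly bounded. Theorem \ref{comp} and Proposition \ref{con_env} both require $\Omega_k$ and $\varphi_k$ uniformly bounded, so neither applies. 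Worse, since $u_k$ climbs from $0$ to $1$ over a $y_n$-distance $\sim\eps_k$, convexity forces $u_k(y)\to\infty$ for every fixed interior $y$ with $y_n>0$; the naive limit is $u_\infty\equiv\infty$ off the floor, which satisfies no equation and yields no contradiction. The ``infinite Monge--Amp\`ere mass'' picture you invoke is not a contradiction in this framework either: the offending mass would concentrate on $\partial\Omega$, where the equation does not hold, and Definition \ref{bv1} is precisely designed to accommodate such vertical boundary jumps without violating $\det D^2u\le\Lambda$ inside.

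The paper avoids this by normalizing \emph{affinely}, with $v(x)=\frac1h\,\tilde u(h^{1/2}Ax)$, $A=\gamma D_h$, $\det A=1$, so that $\Omega_v=\{v<1\}$ is always comparable to $B_1^+$ and the boundary data is capped at $1$; then Theorem \ref{comp} does apply. But this trades one difficulty for another: in the regime $b(h)\to0$ one has $a_n\to0$, hence some tangential $a_i\to\infty$, and the boundary data degenerates in the limit, concentrating the set $\{\varphi<1\}$ onto a lower-dimensional subspace. This is why Section 5 introduces the classes $\mathcal D_0^\mu(a_1,\dots,a_k,\infty,\dots,\infty)$ and proves Proposition \ref{HDprop1} by induction on the number of surviving finite directions, with Lemma \ref{lastlem} supplying the contradiction at each stage via an explicit barrier (and ultimately the volume bound \eqref{HD3}, which is the cleanest version of the ``section too thin'' idea). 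Also, the lemma is not proved by a one-shot limit at scale $h$ but via the doubling property of $b(h)$ in Lemma \ref{l2}, which is what turns the scale-by-scale information into a uniform lower bound. So the gap is not merely that the limit is ``delicate'': the isotropic rescaling is the wrong normalization, and the correct one opens a genuinely new degeneration (of the boundary data, not the domain) that requires the inductive barrier argument to close.
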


Using Lemma \ref{l1} we can easily finish the proof of our theorem.

\

{\it Proof of Theorem \ref{main}.} Since all $d_i$ are bounded below by $c h^{1/2}$ and their product is bounded above by $Ch^{n/2}$ we see that
$$C h^{1/2} \geq d_i \geq ch^{1/2} \quad \quad i=1,\cdots,n$$ for all $h\le c(\rho)$. Using \eqref{3star} we obtain $$\tilde S_h \subset Ch^{1/2}B_1.$$ Moreover, since $$\tilde x^*_h \cdot e_n \ge d_n \ge c h^{1/2}, \quad \quad  (\tilde x^*_h)'=0,$$ and the part $\tilde G_h$ of the boundary $\p \tilde S_h$ contains the graph of $\tilde g_h$ above $|x'| \le c h^{1/2}$, we find that $$ch^{1/2}B_1 \cap \tilde \Omega \subset \tilde S_h,$$ with $\tilde \Omega=A_h \Omega$, $\tilde S_h=A_h S_h$. In conclusion
$$ch^{1/2}B_1 \cap \tilde \Omega \subset A_h S_h \subset Ch^{1/2}B_1.$$
We define the ellipsoid $E_h$ as
$$E_h:=A_h^{-1}(h^{1/2}B_1),$$
hence $$c E_h \cap \overline \Omega \subset S_h \subset C E_h.$$
Comparing the sections at levels $h$ and $h/2$ we find
$$cE_{h/2} \cap \overline \Omega \subset C E_h$$
and we easily obtain the inclusion
$$ A_hA_{h/2}^{-1} B_1 \subset C B_1.$$
If we denote $$A_hx=x-\nu_h x_n$$ then the inclusion above implies $$|\nu_h-\nu_{h/2}| \le C,$$ which gives the desired bound $$|\nu_h| \le C|\log h|$$ for all small $h$.

\qed

\

In order to prove Lemma \ref{l1} we introduce a new quantity $b(h)$ which is proportional to $d_n h^{-1/2}$ and is appropriate when dealing with affine transformations.

\

\textbf{Notation.} Given a convex function $u$ we define $$b_u(h) = h^{-1/2} \sup_{S_h} x_n.$$ Whenever there is no possibility of confusion we drop the subindex $u$ and use the notation $b(h)$.

\

Below we list some basic properties of $b(h)$.

\

1) If $h_1 \le h_2$ then $$\left(\frac{h_1}{h_2}\right)^\frac 12 \le \frac{b(h_1)}{b(h_2)} \le \left(\frac{h_2}{h_1}\right)^\frac 12.$$

2) A rescaling $$ \tilde u (Ax) =u(x)$$ given by a linear transformation $A$ which leaves the $x_n$ coordinate invariant does not change the value of $b,$ i.e $$b_{\tilde u}(h)=b_u(h).$$

3) If $A$ is a linear transformation which leaves the plane $\{x_n=0\}$ invariant the values of $b$ get multiplied by a constant. However the quotients $b(h_1)/b(h_2)$ do not change values i.e $$\frac{b_{\tilde u}(h_1)}{b_{\tilde u}(h_2)}=\frac{b_u(h_1)}{b_u(h_2)}.$$

4) If we multiply $u$ by a constant, i.e.
$$\tilde u(x) = \beta u(x)$$
then $$ b_{\tilde u}(\beta h)= \beta^{-1/2}b_u(h),$$ and $$ \frac{b_{\tilde u}(\beta h_1)}{b_{\tilde u}(\beta h_2)}=\frac{b_u(h_1)}{b_u(h_2)}.$$

\

From \eqref{3star} and property 2 above,
$$c(n)d_n \leq b(h)h^{1/2} \leq C(n)d_n,$$
hence Lemma \ref{l1} will follow if we show that $b(h)$ is bounded below. We achieve this by proving the following lemma.

 \begin{lem}\label{l2} There exist $c_0$, $c(\rho)$ such that if $h \le c(\rho)$ and $b(h) \le c_0$ then \begin{equation}\label{quo}\frac{b(t h)}{b(h)} >2,
 \end{equation} for some $t \in [c_0, 1]$.
 \end{lem}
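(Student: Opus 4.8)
The plan is to argue by compactness and contradiction, the limiting object being a solution of the \emph{degenerate} equation $\det D^2 u=0$, for which the quadratic separation of the boundary data and the normalization $x_{n+1}=0$ of the tangent plane become incompatible with the flatness forced by the negated conclusion. Fix $n\ge 2$ (for $n=1$ the statement is vacuous) and the parameters $\lambda,\Lambda,\mu,\rho$. If Lemma~\ref{l2} failed we could find, for each $k$, a domain $\Omega_k$ and a function $u_k$ satisfying \eqref{om_ass}--\eqref{eq_u1} and \eqref{commentstar} with these parameters, together with a level $h_k$ such that, writing $b_k:=b_{u_k}(h_k)$, one has $b_k\le 1/k$ and
\[
b_{u_k}(t h_k)\le 2\,b_k\qquad\text{for all }t\in[1/k,1].
\]
From the estimates recorded just after the proof of Proposition~\ref{TW} ($d_n\ge c(\rho)h^{\alpha}$, $\alpha=\tfrac{n}{n+1}$, and $\sup_{S_h}x_n\ge d_n$) we get $b_k h_k^{1/2}=\sup_{S_{h_k}}x_n\ge c(\rho)h_k^{\alpha}$, hence $b_k\ge c(\rho)h_k^{(n-1)/(2(n+1))}$; since $b_k\to 0$ this forces $h_k\to 0$.

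Next I would renormalize at level $h_k$. Apply Proposition~\ref{TW}: after the sliding $A_{h_k}$ we obtain $\tilde u_k$ with $\tilde S_{h_k}$ centered on the $x_n$-axis, and then set
\[
u_k^*(y):=h_k^{-1}\,\tilde u_k(T_k y),\qquad T_k:=\mathrm{diag}\bigl(h_k^{1/2},\dots,h_k^{1/2},\bar b_k\bigr),\quad \bar b_k:=\sup_{\tilde S_{h_k}}x_n=b_k h_k^{1/2}.
\]
A direct computation gives: $\det D^2u_k^*=b_k^2 f_k\to 0$; $0\le u_k^*\le 1$; $u_k^*(0)=0$ and $x_{n+1}=0$ is still the tangent plane at $0$ (slidings and diagonal scalings preserve \eqref{eq_u1}); the part of $\p\{u_k^*<1\}$ that is a graph over $\{y_n=0\}$, say $y_n=g_k^*(y')$, obeys $0\le g_k^*(y')\le C(\rho)\,(h_k^{1/2}/b_k)\,|y'|^2\le C(\rho)h_k^{1/(n+1)}|y'|^2$, and on it $\tfrac\mu2|y'|^2\le u_k^*\le 2\mu^{-1}|y'|^2$ (from part~(iii) of Proposition~\ref{TW}); and, using property~2 of $b$ for the sliding, $b_{u_k^*}(t)=b_{u_k}(t h_k)/b_k$, so that by the assumption above and property~1
\[
t^{1/2}\le b_{u_k^*}(t)\le 2\quad\text{for }t\in[1/k,1],\qquad b_{u_k^*}(1)=1.
\]
(The tangential size of $\{u_k^*<1\}$ may blow up with $k$; this is harmless, since the whole argument is local near $0$, and $\{u_k^*<1\}$ contains a fixed half-neighbourhood of $0$ for $k$ large.)

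Now pass to the limit. By Theorem~\ref{comp}, applied in a fixed half-ball around $0$, a subsequence of $u_k^*$ converges to a convex $u^\infty$ with $0\le u^\infty\le 1$, $u^\infty(0)=0$, tangent plane $x_{n+1}=0$ at $0$, and $\det D^2u^\infty=0$; since $g_k^*\to 0$, the limiting domain agrees near $0$ with the half-space $\{y_n>0\}$, on $\{y_n=0\}$ near $0$ one has $\tfrac\mu2|y'|^2\le u^\infty\le 2\mu^{-1}|y'|^2$, and (since $\sup_{\{u<t\}}y_n$ is stable under Hausdorff convergence of the upper graphs) $b_{u^\infty}(t)\le 2$ for all $t\in(0,1]$. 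To contradict this, consider the marginal $m(s):=\inf\{u^\infty(y',s):(y',s)\in\Omega^\infty\}$. As a partial infimum of a convex function over parallel slices $m$ is convex, and $m\ge 0$, $m(0)=u^\infty(0)=0$. Hence $m(s)\ge m'(0^+)s$, so $u^\infty(y)\ge m(y_n)\ge m'(0^+)y_n$ for every $y$; were $m'(0^+)>0$, the plane $x_{n+1}=m'(0^+)y_n$ would support $u^\infty$ at $0$, against \eqref{eq_u1}; thus $m'(0^+)=0$ and $m(s)=o(s)$. On the other hand, $\p\Omega^\infty$ is flat near $0$, so for small $s>0$ the minimum defining $m(s)$ is attained at an interior point where $\nabla_{y'}u^\infty=0$; the structure of solutions of $\det D^2u^\infty=0$ (through each interior point of the graph passes a segment on which $u^\infty$ is affine, and at a slice–minimizer this segment has zero $y'$–slope) then forces $m$ to be affine on a one-sided neighbourhood of $0$. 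With $m(0)=0$ and $m'(0^+)=0$ this gives $m\equiv 0$ near $0$: there is $y_0$ with $(y_0)_n>0$ and $u^\infty(y_0)=0$, hence $u^\infty\equiv 0$ on $[0,y_0]$ by convexity, so $\sup_{\{u^\infty<t\}}y_n\ge(y_0)_n$ for all $t>0$ and $b_{u^\infty}(t)\ge(y_0)_n\,t^{-1/2}\to\infty$, contradicting $b_{u^\infty}(t)\le 2$. This proves Lemma~\ref{l2}, and hence Lemma~\ref{l1}.

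I expect the main obstacle to be precisely the analysis of the degenerate limit $u^\infty$: one must justify rigorously that $\det D^2u^\infty=0$ makes the marginal $m$ affine near $0$ (the ruled structure of graphs of convex functions with vanishing Monge--Ampère measure, together with the fact that near $0$ the minimizers of the slices are interior points), and one must handle carefully the convergence of the boundary data (Proposition~\ref{con_env}, including the convex–envelope subtlety on the flat part $\{y_n=0\}$) and the stability of the quantity $b$ under the convergence of Definition~\ref{conv_def}. The rescaling step and the estimate $b_k\ge c(\rho)h_k^{(n-1)/(2(n+1))}$ (which is what makes the boundary flatten and the right-hand side tend to $0$ simultaneously) are routine once Proposition~\ref{TW} is in hand.
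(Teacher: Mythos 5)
Your strategy (normalize so that $y_n$ becomes $O(1)$, pay for it by sending the right-hand side to zero, and analyze the degenerate limit $\det D^2 u^\infty=0$) is genuinely different from the paper's. The paper normalizes with a map $A=\gamma D_h$ of \emph{determinant one}, so the limit stays in the nondegenerate class $\lambda\le\det D^2u\le\Lambda$ and the price is paid in the boundary data (some $a_i\to\infty$); it then runs a delicate induction (Lemma~\ref{lastlem}, Lemma~\ref{base}, Lemma~\ref{ball}) with explicit barriers of the form $\delta|x|^2+\Lambda\delta^{1-n}(z_1-qx_n)^2+\dots$, which require the equation to be uniformly elliptic from above. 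Your normalization $T_k=\mathrm{diag}(h_k^{1/2},\dots,h_k^{1/2},\bar b_k)$ makes those barriers useless (and, as you note, makes the tangential diameter of $\{u_k^*<1\}$ blow up like $1/b_k$), and transfers all the difficulty to the structure theory of the limit. The preliminary computations (the estimate $b_k\ge c(\rho)h_k^{(n-1)/(2(n+1))}$, the identity $b_{u_k^*}(t)=b_{u_k}(th_k)/b_k$, the flattening $g_k^*\le C(\rho)h_k^{1/(n+1)}|y'|^2$, and the fact that $\det D^2u_k^*=b_k^2f_k\to 0$) are correct.

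However, the analysis of the degenerate limit has a gap that I do not see how to close within the scheme you propose, and it sits exactly where you suspected trouble. Your argument for $m'(0^+)=0$ rests on the tangent-plane condition \eqref{eq_u1} holding for $u^\infty$. Each $u_k^*$ does satisfy \eqref{eq_u1}, but the witnessing points $y$ with $u_k^*(y)<\eps y_n$ can lie at tangential distance comparable to the diameter of $\{u_k^*<1\}$, which is of order $b_k^{-1}\to\infty$; after you truncate to a fixed half-ball $B_R$ in order to extract a Hausdorff limit, these points are discarded, and \eqref{eq_u1} need not survive. Concretely, nothing in your list of limit properties excludes $u^\infty(y)=|y'|^2+\eps y_n$ near $0$: it is convex, has $\det D^2u^\infty=0$, matches the boundary data $\sim|y'|^2$ on $\{y_n=0\}$, and has $b_{u^\infty}(t)=t^{1/2}/\eps\le2$ for $t$ small, so it produces no contradiction. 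Since $m'(0^+)=\eps>0$ in this scenario, the rest of your proof never gets off the ground. Two further steps that would need justification even if \eqref{eq_u1} were secured: (i) the infimum defining $m(s)$ must be attained at an \emph{interior} point of the slice, but after truncating to $B_R$ the slice has a lateral boundary $\p B_R$ on which $u^\infty$ is not controlled, so the minimizer could sit there (or escape); (ii) the segment given by the vanishing Monge--Amp\`ere measure through a slice-minimizer may lie entirely \emph{inside} the slice $\{y_n=s\}$ (giving no information about $m$ off that slice), or may be very short, so the conclusion ``$m$ is affine on a one-sided neighbourhood of $0$'' does not follow from ``through each point passes a segment of affineness.'' The paper sidesteps all three difficulties by keeping the right-hand side bounded below, which makes the barrier $v=\delta|x'|^2+\Lambda\delta^{1-n}(z_1-qx_n)^2+N(z_1-qx_n)+\delta x_n$ admissible and turns the conclusion of Lemma~\ref{lastlem} into a self-improving cone estimate $u\ge p(|z|-qx_n)$, which replaces your marginal-function argument.
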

  This lemma states that if the value of $b(h)$ on a certain section is less than a critical value $c_0$, then we can find a lower section at height still comparable to $h$ where the value of $b$ doubled. Clearly Lemma \ref{l2} and property 1 above imply that $b(h)$ remains bounded for all $h$ small enough.

The quotient in \eqref{quo} is the same for $\tilde u$ which is defined in Proposition \ref{TW}. We normalize the domain $\tilde S_h$ and $\tilde u$ by considering the rescaling
$$v(x)= \frac{1}{h} \tilde u(h^{1/2}Ax)$$
where $A$ is a multiple of $D_h$ (see \eqref{3star}), $A=\gamma D_h$ such that $$\det A=1.$$ Then $$ch^{-1/2} \le \gamma \le C h^{-1/2},$$ and the diagonal entries of $A$ satisfy $$a_i \ge c , \quad \quad i=1,2,\cdots, n-1,$$ $$ c b_u(h) \le a_n \le Cb_u(h).$$

The function $v$ satisfies
$$\lambda \leq \det D^2v \leq \Lambda,$$

$$v \geq 0, \quad  v(0)=0,$$
is continuous and it is defined in $\bar \Omega_v$ with

$$\Omega_v:= \{v<1\} = h^{-1/2}A^{-1}\tilde S_h.$$ Then

$$x^* + cB_1 \subset \Omega_v \subset C B_1^+,$$ for some $x^*$, and

$$ct^{n/2} \leq |S_t(v)| \leq Ct^{n/2}, \quad \forall t\leq 1,$$
where $S_t(v)$ denotes the section of $v$. Since $$\tilde u=h \quad \mbox{in} \quad \p \tilde S_h \cap \{x_n \ge C(\rho) h\},$$ then
$$v=1 \quad \text{on $\p \Omega_v \cap \{x_n \geq \sigma\}, \quad \sigma:=C(\rho)h^{1-\alpha}$}.$$
Also, from Proposition \ref{TW} on the part $G$ of the boundary of $\p \Omega_v$ where $\{v<1\}$ we have
 \begin{equation}\label{ai}
 \frac 1 2 \mu \sum_{i=1}^{n-1} a_i^2 x_i^2 \leq v \leq 2 \mu^{-1} \sum_{i=1}^{n-1} a_i^2 x_i^2.
 \end{equation}

In order to prove Lemma \ref{l2} we need to show that if $\sigma$, $a_n$ are sufficiently small depending on $n, \mu, \lambda, \Lambda$ then the function $v$ above satisfies
\begin{equation}\label{v}
b_v (t) \geq 2b_v(1)
\end{equation} for some $1 > t \geq c_0.$

Since $\alpha<1$, the smallness condition on $\sigma$ is satisfied by taking $h<c(\rho)$ sufficiently small. Also $a_n$ being small is equivalent to one of the $a_i$, $1 \le i \le n-1$ being large since their product is 1 and $a_i$ are bounded below.

In the next section we prove property \eqref{v} above by compactness, by letting $\sigma \to 0$, $a_i \to \infty$ for some $i$ (see Proposition \ref{HDprop}).

\section{Proof of Theorem \ref{main_loc} II}

 In this section we consider the class of solutions $v$ that satisfy the properties above. After relabeling the constants $\mu$ and $a_i$, and by abuse of notation writing $u$ instead of $v$, we may assume we are in the following situation.

Fix $\mu$ small and $\lambda, \Lambda.$
For an increasing sequence $$a_1\leq a_2\leq \ldots \leq a_{n-1}$$
with $$a_1 \ge \mu,$$
we consider the family of solutions
$$u \in \mathcal{D}_\sigma^\mu(a_1, a_2, \ldots, a_{n-1})$$
of convex functions $u : \Omega \rightarrow \R$ that satisfy
\begin{equation}\label{HD1} \lambda \leq \det D^2u \leq \Lambda \quad \text{in $\Omega$,} \quad \text{$0 \le u \le 1$ in $\Omega$};
\end{equation}
\begin{equation}\label{HD2}0 \in \p\Omega, \quad B_{\mu}(x_0) \subset \Omega \subset B_{1/\mu}^+ \quad \text{for some $x_0$;}
\end{equation}
\begin{equation}\label{HD3} \mu h^{n/2} \leq |S_h| \leq \mu^{-1} h^{n/2}.
\end{equation}
 Moreover we assume that the boundary $\p \Omega$ has a closed subset $G$
 \begin{equation}\label{HD3.1}G \subset \{x_n \le \sigma\} \cap \p \Omega
  \end{equation}
  which is a graph in the $e_n$ direction with projection $\pi_n(G)\subset \R^{n-1}$ along $e_n$
\begin{equation}\label{HD3.2}
\{\, \mu^{-1}   \sum_{1}^{n-1} a_i^2 x_i^2 \le 1  \, \} \subset \pi_n(G) \subset  \{ \, \mu   \sum_{1}^{n-1} a_i^2 x_i^2 \le 1   \, \},
\end{equation}
and (see Definition \ref{bv2}), the boundary values of $u=\varphi$ on $\p \Omega$ satisfy
\begin{equation}\label{HD4}
\varphi =1 \quad \text{on $\p \Omega \setminus G$}; \end{equation}
and
\begin{equation}\label{HD5}  \mu \sum_{1}^{n-1} a_i^2 x_i^2  \leq \varphi \leq  \min \{ \, 1, \quad  \mu^{-1} \sum_{1}^{n-1} a_i^2 x_i^2 \, \} \quad \quad \text{on $G$.}
\end{equation}

In this section we prove

\begin{prop}\label{HDprop} For any $M>0$ there exists $C_*$ depending on $M, \mu, \lambda, \Lambda,n$ such that if $u \in \mathcal{D}_\sigma^\mu(a_1, a_2, \ldots, a_{n-1})$ with $$a_{n-1} \geq C_*, \quad \sigma \leq C_*^{-1}$$ then $$b(h)= (\sup_{S_h} x_n) h^{-1/2} \geq M$$ for some $h$ with $ C_*^{-1} \leq h \leq 1 .$\end{prop}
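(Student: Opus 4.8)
\textbf{Proof proposal for Proposition \ref{HDprop}.}
The plan is to argue by compactness and contradiction. Suppose the statement fails: then there is $M>0$ and a sequence $u_k \in \mathcal{D}_{\sigma_k}^\mu(a_1^k,\ldots,a_{n-1}^k)$ with $a_{n-1}^k \to \infty$, $\sigma_k \to 0$, but $b_{u_k}(h) < M$ for all $h \in [1/k, 1]$. In particular $\sup_{S_h(u_k)} x_n \le M h^{1/2}$ for all such $h$; taking $h=1$ gives $u_k$ supported in a region that is being squeezed toward $\{x_n \le M\}$ — more importantly, iterating property 1 of $b$ downward from $h=1$ shows $\sup_{S_h(u_k)} x_n \le M h^{1/2}$ persists, so the sections degenerate in the $x_n$ direction at the critical rate. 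First I would invoke the compactness Theorem \ref{comp}: after passing to a subsequence, $u_k \to u_\infty$, $\varphi_k \to \varphi_\infty$, with $\lambda \le \det D^2 u_\infty \le \Lambda$ and $u_\infty = \varphi_\infty$ on $\p\Omega_\infty$ in the sense of Definition \ref{bv2}. I must identify the limiting domain and boundary data: since $a_{n-1}^k \to \infty$, the ``good'' boundary piece $G_k$, whose $e_n$-projection sits between the ellipsoidal regions $\{\mu^{-1}\sum (a_i^k)^2 x_i^2 \le 1\}$ and $\{\mu \sum (a_i^k)^2 x_i^2 \le 1\}$, collapses in the $x_{n-1}$ direction (and possibly others), and since $\sigma_k \to 0$ it collapses onto $\{x_n=0\}$. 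So $\Omega_\infty$ should be a limiting convex set contained in $\{x_n \ge 0\}$, containing a fixed ball $B_\mu(x_0^\infty)$, whose boundary near the origin contains a segment or lower-dimensional convex piece inside $\{x_n=0\}$ on which $\varphi_\infty$ is pinched between two multiples of $\mu \sum_{i<n-1}(\text{remaining directions})$ — in particular $\varphi_\infty = 0$ along a whole segment through the origin in, say, the $x_{n-1}$-axis direction (because $a_{n-1}^k|x_{n-1}| \le 1$ forces $x_{n-1}$ to range over an interval shrinking only like $1/a_{n-1}^k$, yet $\varphi_k \le \mu^{-1}(a_{n-1}^k)^2 x_{n-1}^2$ does NOT go to zero there — so I need to be careful: the right conclusion is that on the limiting set $\varphi_\infty$ still has the quadratic pinching $\mu \sum_{i<n-1} a_i^2 x_i^2 \le \varphi_\infty \le \mu^{-1}\sum_{i<n-1} a_i^2 x_i^2$ in the surviving directions, and is flat/degenerate in the $x_{n-1}$ direction).

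Granting this, the key structural fact I would extract is that the limiting solution $u_\infty$ is defined on a convex domain whose section $S_1(u_\infty)$ still has volume bounded above and below by multiples of $1$ (this passes to the limit from \eqref{HD3}), yet $u_\infty$ lies in the slab $0 \le u_\infty$, with all sections $S_h(u_\infty)$ trapped in $\{x_n \le M h^{1/2}\}$. Next I would derive a contradiction with the volume bound. The heuristic is that a genuine solution with $\det D^2 u_\infty \ge \lambda$ cannot have its sections so thin in the $x_n$-direction unless they are correspondingly fat in the tangential directions, but the tangential fatness is controlled by the quadratic boundary separation $\varphi_\infty \ge \mu \sum_{i<n-1} a_i^2 x_i^2$ which caps how far $S_h$ can extend in those directions — once one tangential direction (the $x_{n-1}$ one) is lost to the limit, we have effectively reduced dimension, and the Alexandrov/volume estimate $|S_h| \le C h^{n/2}$ sharpened in the degenerate geometry beats the lower bound $|S_h| \ge \mu h^{n/2}$. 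Concretely: build an explicit lower barrier for $u_\infty$ of the form $w = \eps x_n + \sum_{i<n-1} c\, a_i^2 x_i^2 + C x_n^2/(Mh^{1/2})^2$ type (mirroring the barrier constructions in the proof of Proposition \ref{TW}), using that $\p\Omega_\infty$ near $0$ is squeezed into $\{x_n \le Mh^{1/2}\}$ and that $\varphi_\infty$ has the stated tangential quadratic lower bound in the $n-2$ surviving directions; arrange $\det D^2 w > \Lambda$ while $w \le \varphi_\infty$ on $\p\Omega_\infty$ and $w < h$ on $\p S_h$, forcing $w \le u_\infty$ and hence $u_\infty \ge \eps x_n$, contradicting that $x_{n+1}=0$ is the tangent plane at $0$ (which also passes to the limit). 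Alternatively, and perhaps cleaner, I would directly compare volumes: the constraint $S_h(u_\infty) \subset \{x_n \le M h^{1/2}\}$ together with the quadratic side-constraint in $n-2$ directions gives $|S_h(u_\infty)| \le C(M) h^{1/2} \cdot h^{(n-2)/2} \cdot (\text{length in the lost direction})$, and by convexity and the degeneracy the lost-direction length is $o(h^{1/2})$ along the relevant subsequence of scales, contradicting $|S_h| \ge \mu h^{n/2}$ in the limit.

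The main obstacle I anticipate is the bookkeeping of the limiting geometry: exactly which of the $a_i^k$ blow up (only $a_{n-1}^k$ is forced to, since the $a_i$ are increasing and only the largest is controlled, but others may or may not), and consequently the precise form of $\Omega_\infty$, $G_\infty$ and $\varphi_\infty$ — in particular keeping track of the boundary data in the sense of Definition \ref{bv2} (the vertical-segment phenomenon) through the Hausdorff limit, and making sure $\varphi_\infty$ is not simply $\equiv 1$ near the origin (which would be useless) but retains the nontrivial quadratic lower bound $\mu \sum_{i: a_i^k \not\to\infty, i<n} a_i^2 x_i^2 \le \varphi_\infty$. One must also confirm that the normalizations \eqref{HD2}, \eqref{HD3} are stable under the limit, and that $0$ remaining the tangent plane survives — this uses that if it failed, a supporting plane $\eps x_n$ would appear in the limit and one could pull it back to large $k$. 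Once the limit object is correctly pinned down, the contradiction itself is a barrier argument of the type already rehearsed several times in Section 4, so it should be routine; the delicate step is purely the identification of the degenerate limit.
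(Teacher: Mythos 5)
Your high-level strategy is the same as the paper's: assume the statement fails, take a degenerate sequence, pass to a compactness limit, and derive a contradiction via barriers and a volume estimate. You also correctly flag the main bookkeeping danger, namely that only the largest $a_i$ is forced to blow up and the others may converge, so the limit object has a nontrivial partial-degenerate structure. However, there are two structural ideas in the paper's proof that your sketch does not supply, and without them the contradiction does not close.

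First, the paper organizes the limit objects into the classes $\mathcal{D}_0^\mu(a_1,\ldots,a_k,\infty,\ldots,\infty)$ for $0\le k\le n-2$ and proves the result by \emph{induction on $k$} (Proposition \ref{HDprop1}, Lemmas \ref{base} and \ref{ball}). This induction is not a convenience: in the inductive step one must again take a limit, and whether $b$ stays bounded in that limit is controlled precisely by the level-$(k-1)$ statement. Your proposal takes a single limit and then tries to argue directly, which leaves unaddressed the case where several, but not all, of the $a_i$ survive; that case has to be peeled off recursively. In particular, the crucial Lemma \ref{ball} (that after a sliding along the surviving directions the section $\tilde S_h$ is comparable to $C_0 h^{1/2}B_1^+$) is proved \emph{using} the inductive hypothesis; it does not come for free from the normalization $|S_h|\sim h^{n/2}$.

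Second, and more importantly, a single barrier cannot force $u_\infty\ge\eps x_n$, nor does the volume argument close without additional input. A barrier of the type you propose gives at best $u\ge p(|z|-qx_n)$ for some $q>0$, which only traps $S_h$ in $\{|z|\lesssim qx_n+h\}$; combined with $x_n\le Mh^{1/2}$ this makes $|z|\lesssim h^{1/2}$ — a section comparable to a \emph{ball}, not to a slab, so $|S_h|\sim h^{n/2}$ is still perfectly compatible. The paper's Lemma \ref{lastlem} resolves this by an \emph{iteration}: starting from $u\ge p(|z|-qx_n)$, one rescales by $T_h$ at scale $h$, solves a new barrier problem in the normalized section, and extracts the improved cone $u\ge p'(|z|-(q-\eta)x_n)$; after finitely many steps the slope crosses zero and one obtains $u\ge\eps(|z|+x_n)$, whence $S_h\subset\{x_n\le\eps^{-1}h\}$ and $|S_h|=O(h^{(n+1)/2})$, which finally contradicts $|S_h|\ge ch^{n/2}$. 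Your "directly compare volumes" alternative asserts the lost-direction length is $o(h^{1/2})$, but this is exactly the content that the cone iteration has to establish; it is not available a priori. So the proposal, while pointed in the right direction and using the same tools (compactness, barriers, volume bounds, the tangent-plane rigidity), is missing the inductive decomposition of the degenerate limits and the slope-improvement iteration that actually produces the contradiction.
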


Property \eqref{v} (hence Theorem \ref{main_loc}), easily follows from this proposition. Indeed, by choosing $$M= 2 \mu^{-1} \ge 2b(1)$$ in Proposition \ref{HDprop} we prove the existence of a section $S_h$ with $h \geq c_0$ such that $$b(h) \geq 2 b(1).$$ Clearly the function $v$ of the previous section satisfies the hypotheses above (after renaming the constant $\mu$) provided that $\sigma$, $a_n$ are sufficiently small.

We prove Proposition \ref{HDprop} by compactness. We introduce the limiting solutions from the class $\mathcal{D}_\mu^\sigma(a_1,\ldots,a_{n-1})$ when $a_{k+1} \rightarrow \infty$ and $\sigma \rightarrow 0.$

If $\mu \le a_1 \le \ldots \le a_k$, we denote by $$\mathcal{D}_0^\mu(a_1,\ldots, a_k, \infty, \infty, \ldots, \infty) , \quad 0 \le k \le n-2$$ the class of functions $u$ that satisfy properties \eqref{HD1}-\eqref{HD2}-\eqref{HD3} with,
\begin{equation}
G \subset \{x_i=0, \quad i > k\} \cap \p \Omega
\end{equation}
and if we restrict to the space generated by the first $k$ coordinates then
\begin{equation}
\{\, \mu^{-1}   \sum_{1}^k a_i^2 x_i^2 \le 1  \, \} \subset G \subset  \{ \, \mu   \sum_{1}^k a_i^2 x_i^2 \le 1   \, \}.
\end{equation}
Also, $u=\varphi$ on $\p \Omega$ with
\begin{equation}\label{4'} \varphi=1 \quad \text{on $\p \Omega \setminus G$};\end{equation}
\begin{equation}  \mu \sum_{1}^k a_i^2 x_i^2  \leq \varphi \leq  \min \{ \, 1, \quad  \mu^{-1} \sum_{1}^k a_i^2 x_i^2  \, \} \quad \quad \text{on $G$.}
\end{equation}
The compactness theorem (Theorem \ref{comp}) implies that if $$u_m \in D_{\sigma_m}^\mu(a^m_1,\ldots, a^m_{n-1})$$ is a sequence with $$\sigma_m \to 0 \quad \mbox{ and } \quad a^m_{k+1} \to \infty$$ for some fixed $0 \le k \le n-2$, then we can extract a convergent subsequence to a function $u$ (see Definition \ref{conv_def}) with
$$u \in D_0^\mu(a_1,..,a_l,\infty,..,\infty),$$
for some $l \le k$ and $a_1 \le \ldots \le a_l.$

Proposition \ref{HDprop} follows easily from the next proposition.

\begin{prop}\label{HDprop1} For any $M>0$ and $0 \leq k \leq n-2$ there exists $c_k$ depending on $M, \mu, \lambda, \Lambda,n, k$ such that if
\begin{equation}\label{d_0}u \in \mathcal{D}_0^\mu(a_1, \ldots,a_k, \infty, \ldots, \infty)
 \end{equation}
 then $$b(h)= (\sup_{S_h} x_n) h^{-1/2} \geq M$$ for some $h$ with $ c_k \leq h \leq 1 .$
\end{prop}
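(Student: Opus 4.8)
The plan is to argue by induction on $k$, the number of finite $a_i$'s, running from $k=0$ up to $k=n-2$, and at each step to use a compactness argument that sends $a_{k+1},\ldots$ to infinity (and thereby reduces to a class with fewer finite directions, i.e. smaller $k$) combined with the Localization Theorem applied in one dimension lower. The base case $k=0$ is the class $\mathcal{D}_0^\mu(\infty,\ldots,\infty)$: here $G$ is contained in a single point $\{x'=0\}\cap\p\Omega$, so the boundary data $\varphi$ essentially equals $1$ on all of $\p\Omega$ except at the origin where it is $0$, and $u$ is a solution vanishing only at $0$ with $0\in\p\Omega$. Such a $u$ is, up to the constants $\mu,\lambda,\Lambda$, a fixed compact family, and $0$ is genuinely an interior-type vanishing point of the trace in the sense that the section $S_h$ must grow like a full (interior) section; in particular $b(h)=h^{-1/2}\sup_{S_h}x_n\to\infty$ as $h\to 0$ by strict convexity, so some $h\ge c_0$ works with $b(h)\ge M$. (Alternatively one observes directly that for $k=0$ the set $G$ being a single point forces $\sup_{S_h} x_n \gtrsim $ the full width of $S_h$, which is $\sim h^{1/2}$ times a factor blowing up.)

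For the inductive step, suppose the statement holds for $k-1$ with constant $c_{k-1}$, and suppose it fails for $k$: then there is a sequence $u_m\in\mathcal{D}_0^\mu(a_1^m,\ldots,a_k^m,\infty,\ldots,\infty)$ with $b_{u_m}(h)<M$ for all $h\in[c_k^m,1]$, where $c_k^m\to 0$. Passing to a subsequence, if the largest finite parameter $a_k^m$ stays bounded, the limit $u$ lies in $\mathcal{D}_0^\mu(a_1,\ldots,a_k,\infty,\ldots)$ with all $a_i$ finite and nonzero (they are bounded below by $\mu$); then one runs the Localization Theorem (Theorem \ref{main_loc}) in the $k$-dimensional subspace $\{x_i=0,\ i>k\}$ — more precisely, $u$ restricted near $0$ behaves like a solution in $\R^{k+1}$ with genuinely quadratic boundary separation in the $k$ tangential directions, so $b_u(h)$ is bounded below by a positive constant for all small $h$, and in fact $b_u(h)\to\infty$ is not needed: we need $b_u(h)\ge M$ for some $h\ge c_k$. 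The point is that the localization theorem in dimension $k+1$ gives $S_h$ comparable to a half-ellipsoid with $x_n$-extent $\sim h^{1/2}$, hence $b_u(h)$ bounded below uniformly; but to beat an arbitrary $M$ we instead use that if $a_k^m\to\infty$ we are in the other case. So the real dichotomy is: either $a_k^m\to\infty$, and then the limit lies in $\mathcal{D}_0^\mu(a_1,\ldots,a_l,\infty,\ldots)$ with $l\le k-1$, and the inductive hypothesis applied to the limit $u$ produces $h_0\ge c_{k-1}$ with $b_u(h_0)\ge M+1$; by Hausdorff convergence of the upper graphs (Theorem \ref{comp}) we get $b_{u_m}(h_0)\ge M$ for $m$ large, and $h_0\ge c_{k-1}\ge c_k$ once we set $c_k:=\min(c_{k-1},\ldots)$, contradicting the choice of $u_m$; or $a_k^m$ stays bounded, and then the limit keeps all $k$ parameters finite, and we use the Localization Theorem in the $(k+1)$-dimensional slice to get $b_u(h)\ge c>0$ for all small $h$ — and here we need an extra observation to reach $M$ rather than just a fixed constant.

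That extra observation — how to go from "bounded below by a constant" to "exceeds an arbitrary $M$" in the bounded-$a_k$ case — is where I expect the main subtlety, and I suspect the actual argument arranges matters so that this case does not need to produce a large $b$ at all: instead, one rescales. Concretely, given $u$ in the bounded-$a_k$ limiting class, rescale by the linear map normalizing $S_{h_1}$ for a suitable small $h_1$; the rescaled function lies again in a class $\mathcal{D}_0^\mu(\tilde a_1,\ldots,\tilde a_k,\infty,\ldots)$ but now with $\tilde a_k\ge a_k/(\text{width ratio})$ — and iterating, either some rescaled $a_k$ becomes large (back to the first case, which we've handled), or $b(h)$ has already doubled along the way (by an argument parallel to Lemma \ref{l2}). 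This self-improving/rescaling mechanism, threaded through the compactness, is the heart of the matter; the rest is bookkeeping of the constants $c_k$ down the induction and routine application of the comparison principle and barrier constructions already used in Proposition \ref{TW}. The main obstacle, then, is setting up the compactness so that the limiting object genuinely lands in a class with strictly fewer finite tangential parameters (justifying that $l\le k-1$ when $a_{k}^m\to\infty$, including the possibility that further $a_i^m$ also blow up), and handling the degenerate geometry of $G$ in the limit via Definition \ref{bv2} so that the boundary data passes to the limit correctly.
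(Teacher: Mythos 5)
Your overall scaffolding — induction on $k$, compactness to pass to a limiting solution, the observation that blowing up the $a_i$'s reduces the number of finite tangential parameters — matches the paper. But you are explicitly guessing at the decisive mechanism ("I suspect the actual argument arranges matters so that..."), and the guess misses the actual key lemma, so the proposal has a genuine gap.

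The crux of the paper's argument is a slope-improvement lemma (Lemma \ref{lastlem}): if $u$ lies in the degenerate limiting class $\mathcal{D}_0^\mu(1,\ldots,1,\infty,\ldots,\infty)$, satisfies $u \geq p(|z| - q x_n)$ for some $p,q>0$, and every section $S_h$ is comparable to a ball of radius $h^{1/2}$ after a sliding $T_h$ along the $y$-directions, then one can build a barrier of the form $\delta|x|^2 + \frac{\Lambda}{\delta^{n-1}}(z_1-qx_n)^2 + N(z_1-qx_n) + \delta x_n$ in the rescaled section to improve the slope from $q$ to $q - \eta$ with a fixed $\eta$ depending only on $q_0,\mu,C_0,\Lambda,n$. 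Iterating finitely many times gives $u \geq \eps(|z| + x_n)$, forcing $|S_h| = O(h^{(n+1)/2})$, which contradicts the volume normalization $|S_h|\sim h^{n/2}$. This iteration is what rules out the limiting object, both in the base case $k=0$ and in the inductive step. Your proposal has no analogue of it. Your base-case argument, that "$b(h)\to\infty$ by strict convexity" because $G$ is a single point, is not a proof: without the slope-improvement iteration there is no contradiction, and solutions of $\det D^2u\in[\lambda,\Lambda]$ with boundary data jumping from $0$ at a point to $1$ elsewhere are not obviously barred from having $b(h)$ bounded.

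Two further, smaller inaccuracies. First, in the bounded-$a_k$ branch you propose to "run the Localization Theorem in the $(k+1)$-dimensional slice"; but $u$ does not solve a $(k+1)$-dimensional Monge-Amp\`ere equation on a slice, so this step is not available. What the paper actually does there is normalize the bounded $a_i$'s into $\tilde\mu$, so the limit lands in $\mathcal{D}_0^{\tilde\mu}(1,\ldots,1,\infty,\ldots,\infty)$, and then attacks it via Lemma \ref{lastlem}. Second, the induction hypothesis is used in a more delicate way than you describe: it enters inside Lemma \ref{ball}, where, after sliding and rescaling a single section $S_h$ to a function $w \in \mathcal{D}_0^{\bar\mu}(\beta_1,\ldots,\beta_k,\infty,\ldots,\infty)$, the induction hypothesis applied to $w$ bounds $\theta_n$ and the $\gamma_j$'s from below (so that $|A|$ is bounded and $T_hS_h \subset C_0 h^{1/2}B_1^+$), which is precisely the ball-comparability input that Lemma \ref{lastlem} needs. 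This section-by-section use of the induction inside a geometric lemma is different from, and sharper than, the dichotomy you sketched.
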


Indeed, if Proposition \ref{HDprop} fails for a sequence of constants $C_* \to \infty$ then we obtain a limiting solution $u$ as in \eqref{d_0} for which $b(h) \le M$ for all $h>0$. This contradicts Proposition \ref{HDprop1} (with $M$ replaced by $2M$).

\

We prove Proposition \ref{HDprop1} by induction on $k$. We start by introducing some notation.

Denote $$x= (y,z,x_n), \quad y=(x_1, \ldots, x_k) \in \R^k, \quad z=(x_{k+1}, \ldots, x_{n-1}) \in \R^{n-1-k}.$$

\begin{defn}\label{sliding}
We say that a linear transformation $T:\R^n \to \R^n$ is a {\it sliding along the $y$ direction} if
$$Tx := x+ \nu_1 z_1+ \nu_2 z_2+ \ldots + \nu_{n-k-1} z_{n-k-1} + \nu_{n-k}x_n$$ with $$\nu_1, \nu_2, \ldots, \nu_{n-k} \in span\{e_1, \ldots, e_k\}$$
\end{defn}

\

 We see that $T$ leaves the $(z,x_n)$ components invariant together with the subspace $(y,0,0)$. Clearly, if $T$ is a sliding along the $y$ direction then so is $T^{-1}$ and $$\det T=1.  $$

The key step in the proof of Proposition \ref{HDprop1} is the following lemma.

\begin{lem}\label{lastlem}
Assume that $$u \geq p(|z| - q x_n),$$ for some $p,q>0$ and assume that for each section $S_h$ of $u$, $h\in (0,1)$, there exists $T_h$ a sliding along the $y$ direction such that $$T_hS_h \subset C_0 h^{1/2} B_1^+,$$ for some constant $C_0$. Then
$$u \, \not \in \, D_0^\mu(1,\ldots,1, \infty, \ldots, \infty).$$

\end{lem}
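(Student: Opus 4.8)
The plan is to argue by contradiction. Suppose $u \in D_0^\mu(1,\ldots,1,\infty,\ldots,\infty)$ with $u \geq p(|z| - qx_n)$ and with the uniform inclusions $T_h S_h \subset C_0 h^{1/2}B_1^+$. The geometric meaning of the hypotheses is that, in the $z$-directions, $S_h$ is flat (squeezed between the graph constraint $x_i = 0$ for $i > k$ and the barrier $u \geq p(|z| - qx_n)$), so that all the ``size" of $S_h$ in the $z$-variables is of order $h^{1/2}$; meanwhile the slidings $T_h$ only rearrange the $y$-coordinates, so they do not help spread $S_h$ out in the $z$- or $x_n$-directions. Combining $T_h S_h \subset C_0 h^{1/2}B_1^+$ with $\det T_h = 1$ and the volume bound $|S_h| = |T_hS_h| \geq \mu h^{n/2}$, I would first extract that the cross-section of $S_h$ in the $y$-variables must be comparable to $h^{1/2}$ in every direction as well; that is, $S_h$ is, up to the sliding $T_h$, comparable to a ball of radius $h^{1/2}$, hence $b(h) = h^{-1/2}\sup_{S_h} x_n \leq C_0$ for all $h \in (0,1)$.

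The key step is then to show that this forces $u$ to be, in a quantitative sense, essentially a function of $(|z|, x_n)$ near the origin with bounded second-order behavior --- and to contradict one of the defining properties of the class $D_0^\mu$. I expect to use the following mechanism: because the tangential sections in all $n-1$ directions transverse to $e_n$ collapse at rate $h^{1/2}$, and because on the graph part $G$ (which lies in $\{x_i = 0,\ i>k\}$) the boundary data satisfies $\varphi \geq \mu \sum_1^k a_i^2 x_i^2 = \mu|y|^2$, one builds a lower barrier of the form $w = \mu|y|^2/C + p(|z|-qx_n) + \beta x_n + (\text{quadratic in } x_n)$ in a section $S_h$, choosing the coefficient of $x_n^2$ so that $\det D^2 w > \Lambda$. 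The inclusion $T_h S_h \subset C_0 h^{1/2}B_1^+$ controls $\sup_{S_h}|y|^2$ (after sliding) and $\sup_{S_h} x_n$ by $Ch$, so $w < h$ on $\partial S_h$ provided $h$ is small, while $w \geq \varphi$ on $G$ and $w = \beta x_n + \ldots \leq 1$ elsewhere on $\partial\Omega$; hence $w \leq u$ in $S_h$. Since $w$ contains the term $\beta x_n$ with $\beta > 0$ and $w(0) = 0$, this contradicts the fact that $x_{n+1} = 0$ is the tangent plane of $u$ at $0$ (which is built into properties \eqref{eq_u1}, preserved through the rescalings), exactly as in the barrier arguments of Proposition \ref{TW} and Lemma \ref{l2}.

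More carefully, I would need to iterate this: a single barrier in one section may not immediately contradict the tangent-plane condition if $\beta$ is forced to be small, so instead I would track how the bound on $b(h)$ propagates down through a geometric sequence of heights $h, \theta h, \theta^2 h, \ldots$ and show that the slidings $T_{\theta^j h}$ cannot drift too much (their difference is controlled, as in the $|\nu_h| \leq C|\log h|$ estimate in the proof of Theorem \ref{main_loc}), so that in a fixed universal section one obtains $u \geq \mu' |x|^2$ for a genuinely positive $\mu'$ --- contradicting that $u \in D_0^\mu$ has a genuine flat direction coming from the $\infty$'s (the $z$-variables), along which $u$ can only grow linearly via $p(|z|-qx_n)$, not quadratically, which is incompatible with the volume lower bound $|S_h|\geq \mu h^{n/2}$ once the $y$- and $x_n$-extents are pinned at order $h^{1/2}$.

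The main obstacle I anticipate is the bookkeeping that reconciles three things simultaneously: the sliding transformations $T_h$ (which change with $h$ and must be shown to vary slowly), the volume normalization $\det T_h = 1$ with $|S_h| \sim h^{n/2}$, and the linear lower barrier $p(|z| - qx_n)$ which obstructs quadratic growth in the $z$-directions. Making the barrier construction genuinely contradict the tangent-plane normalization --- rather than just giving a weaker estimate --- is the delicate point, and it is exactly where the hypothesis ``$a_i = 1$ for $i \leq k$" (as opposed to general $a_i$) is used: it guarantees that after the sliding the $y$-section is not just $\leq C h^{1/2}$ but comparable to $h^{1/2}$, closing the volume accounting and forcing the extra linear term $\beta x_n$ in $w$ to be bounded below by a universal positive constant.
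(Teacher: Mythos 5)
Your proposal correctly identifies the contradiction target (the volume lower bound $|S_h|\ge \mu h^{n/2}$) and correctly observes that the sliding $T_h$ is harmless in the $z$ and $x_n$ directions, but it misses the central mechanism of the paper's proof and the barrier you write down cannot work as stated. I see three concrete problems.

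First, the barrier $w=\mu|y|^2/C+p(|z|-qx_n)+\beta x_n+(\text{quadratic in }x_n)$ has a degenerate Hessian. The function $|z|$ has Hessian $\frac{p}{|z|}(I-\hat z\otimes\hat z)$, which vanishes in the radial direction $\hat z$; the extra terms you add ($\mu|y|^2/C$ in the $y$-block, a pure $x_n^2$ term in the $x_n$-slot) do not couple $z$ with $x_n$ and so cannot remove this kernel direction. Hence $\det D^2w=0<\Lambda$ and $w$ is not a subsolution. The paper circumvents exactly this by working one $z$-coordinate at a time with the barrier $v=\delta|x|^2+\frac{\Lambda}{\delta^{n-1}}(z_1-qx_n)^2+N(z_1-qx_n)+\delta x_n$; the rank-one perturbation $(z_1-qx_n)^2$ on top of $\delta|x|^2$ produces $\det D^2v=(2\delta)^n+2^n\Lambda(1+q^2)>\Lambda$, and the infimum of the resulting bound over all directions in the $z$-plane is then taken.

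Second, and more fundamentally, you do not identify the iterative improvement of $q$, which is the heart of the proof. The paper shows that the hypothesis $u\ge p(|z|-qx_n)$ self-improves to $u\ge p'(|z|-(q-\eta)x_n)$ with a universal gain $\eta>0$ (this is where the rescaling $w(x)=\frac{1}{h}u(h^{1/2}T_h^{-1}x)$ is essential: the lower bound becomes $w\ge\frac{p}{h^{1/2}}(|z|-qx_n)$, with a coefficient that blows up as $h\to0$, which is used to absorb the $N(z_1-qx_n)$ term on the boundary). Iterating finitely many times turns $q$ negative and yields $u\ge\eps(|z|+x_n)$. Your ``iteration'' instead tracks $b(h)$ and drift of $T_h$ down a geometric sequence of heights, which is a different (and, here, irrelevant) argument borrowed from the proof of the Localization Theorem.

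Third, your stated endgame does not actually close. Pinning the $y$-, $z$- and $x_n$-extents of $S_h$ at order $h^{1/2}$ gives $|S_h|\sim h^{n/2}$, which is \emph{consistent} with (HD3), not contradictory; and reaching $u\ge\mu'|x|^2$ contradicts nothing in the definition of $D_0^\mu(1,\ldots,1,\infty,\ldots,\infty)$. Moreover the tangent-plane normalization is not part of that class, so a lower barrier containing $\beta x_n$ cannot contradict it. The correct contradiction is that $u\ge\eps(|z|+x_n)$ forces $S_h\subset\{x_n\le\eps^{-1}h\}$, pinning the $x_n$-extent at order $h$ (not $h^{1/2}$), so that together with $T_hS_h\subset C_0h^{1/2}B_1^+$ one gets $|S_h|=|T_hS_h|=O(h^{(n+1)/2})$, violating the lower bound in (HD3) for small $h$.
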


\begin{proof}
Assume by contradiction that $u\in D_0^\mu$ and it satisfies the hypotheses with $q \le q_0$ for some $q_0$. We show that
\begin{equation}\label{5.1}
u \geq p'(|z| - q' x_n), \quad \quad q'=q-\eta,
\end{equation}
 for some $0<p' \ll p,$ where the constant $\eta>0$ depends only on $q_0$ and $\mu, C_0, \Lambda, n$.

Then, since $q'\le q_0$, we can apply this result a finite number of times and obtain
$$u \ge \eps(|z|+x_n),$$ for some small $\eps>0$. This gives $S_h \subset \{x_n \le \eps^{-1} h\}$ hence $$T_h S_h \subset \{x_n \le \eps^{-1} h\}  $$ and by the hypothesis above $$|S_h|=|T_hS_h|=O(h^{(n+1)/2}) \quad \mbox{as $h \to 0$},$$ and we contradict \eqref{HD3}.

Now we prove \eqref{5.1}. Since $u\in D_0^\mu$ as above, there exists a closed set $$G_h \subset \p S_h \cap \{z=0,x_n=0\}$$ such that on the subspace $(y,0,0)$
$$\{\mu^{-1}|y|^2 \le h\} \subset G_h \subset \{\mu |y|^2 \le h\},$$
and the boundary values $\varphi_h$ of $u$ on $\p S_h$ satisfy (see Section 2)
$$ \varphi_h=h \quad \text{on $\p S_h \setminus G_h$};$$
$$ \mu |y|^2  \leq \varphi_h \leq  \min \left\{h, \mu^{-1}|y|^2  \right \} \quad \quad \text{on $G_h$.}$$

Let $w$ be a rescaling of $u$, $$w(x):= \frac{1}{h} u(h^{1/2} T_h^{-1} x)$$ for some small $h\ll a.$  Then $$S_1(w) := \Omega_w=h^{-1/2}T_h S_h \subset B^+_{C_0}$$ and our hypothesis becomes
\begin{equation}\label{w} w \geq \frac{p}{h^{1/2}} (|z| -q x_n).
 \end{equation} Moreover the boundary values $\varphi_w$ of $w$ on $\p \Omega_w$ satisfy
 $$\varphi_w=1 \quad \text{on $\p \Omega_w \setminus G_w$}$$
 $$\mu |y|^2 \le \varphi_w \le \min\{1, \mu^{-1}|y|^2\} \quad \mbox{on} \quad G_w:=h^{-1/2}G_h.$$

Next we show that $\varphi_w \ge v$ on $\p \Omega_w$ where $v$ is defined as
$$v := \delta |x|^2 + \frac{\Lambda}{\delta^{n-1}}(z_1-qx_n)^2 + N(z_1-qx_n) +\delta x_n, $$ and $\delta$ is small depending on $\mu$ and $C_0$, and $N$ is chosen large such that $$\frac{\Lambda}{\delta^{n-1}} t^2 + Nt$$ is increasing in the interval $|t|\le (1+q_0)C_0.$

From the definition of $v$ we see that $$\det D^2v > \Lambda.$$

On the part of the boundary $\p \Omega_w $ where $z_1 \le qx_n$ we use that $\Omega_w \subset B_{C_0}$ and obtain
$$v \le \delta (|x|^2+x_n) \le \varphi_w.$$

On the part of the boundary $\p \Omega_w $ where $z_1 > qx_n$ we use \eqref{w} and obtain
$$1=\varphi_w \geq C(|z| -q x_n) \ge C(z_1-qx_n)$$ with $C$ arbitrarily large provided that $h$ is small enough. We choose $C$ such that
the inequality above implies $$\frac{\Lambda}{\mu^{n-1}}(z_1-qx_n)^2 + N(z_1 -qx_n) <\frac 1 2.$$ Then
$$\varphi_w=1 > \frac 12 + \delta (|x|^2+x_n) \ge v.$$

In conclusion $\varphi_w \ge v$ on $\p \Omega_w$ hence the function $v$ is a lower barrier for $w$ in $\Omega_w$. Then
$$w \geq N(z_1 -qx_n)+\delta x_n$$ and, since this inequality holds for all directions in the $z$-plane, we obtain  $$w \geq N(|z| -(q-\eta)x_n), \quad \quad \eta := \frac \delta N.$$ Scaling back we get $$u \ge p'(|z| -(q-\eta)x_n) \quad \quad \mbox{in $S_h$}.$$ Since $u$ is convex and $u(0)=0$, this inequality holds globally, and \eqref{5.1} is proved.

\end{proof}

\begin{lem}\label{base} Proposition \ref{HDprop1} holds for $k=0$.\end{lem}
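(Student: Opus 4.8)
The plan is to prove the base case by contradiction, using compactness together with Lemma \ref{lastlem}. Suppose the statement fails: there is a fixed $M>0$ and a sequence $u_m \in \mathcal D_0^\mu(\infty,\ldots,\infty)$ (the case $k=0$, so \emph{all} tangential directions are degenerate) for which $b_{u_m}(h) \le M$ for all $h\in[1/m,1]$. Since the sections satisfy $|S_h| \sim h^{n/2}$ by \eqref{HD3}, the bound $b_{u_m}(h)\le M$ says that $S_h \subset \{x_n \le M h^{1/2}\}$; combined with John's lemma applied to $S_h$ this gives that $S_h$, after an affine normalization, is comparable to an ellipsoid, and since $x_n$ is controlled by $Mh^{1/2}$ while the volume is $\sim h^{n/2}$, the remaining $n-1$ semi-axes are forced to be comparable to $h^{1/2}$ as well. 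Hence there is a sliding $T_h$ along $\{x_n=0\}$ (here $y=x'$, $z$ is empty when $k=0$) with $T_h S_h \subset C_0 h^{1/2} B_1^+$ for a constant $C_0=C_0(M,\mu,\lambda,\Lambda,n)$. Extracting a subsequence via Theorem \ref{comp}, we get a limit $u \in \mathcal D_0^\mu(\infty,\ldots,\infty)$ (no $a_i$'s survive, since $k=0$) which still satisfies $b_u(h)\le 2M$ for all $h>0$ and still enjoys the uniform sliding-containment property $T_h S_h \subset C_0 h^{1/2} B_1^+$.

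Now I want to produce the lower barrier hypothesis $u \ge p(|z| - q x_n)$ required by Lemma \ref{lastlem}. When $k=0$ the variable $z$ is the full tangential variable $(x_1,\ldots,x_{n-1})$, so I need $u \ge p(|x'| - q x_n)$ for some $p,q>0$. This should follow from the structure of $\mathcal D_0^\mu$ together with a barrier argument: the boundary data $\varphi$ vanishes only at the single point $0$ (since for $k=0$ the set $G$ is just $\{x' = 0, x_n=0\}$, a point, and $\varphi \equiv 1$ off $G$), so using a barrier of the form $\delta|x|^2 + \Lambda\delta^{-(n-1)}(x_i')^2 + N x_i' + \delta x_n$ in each coordinate direction — exactly as in the proof of Lemma \ref{lastlem} — one gets $u \ge N(|x'| - q x_n)$ away from the boundary, hence globally by convexity and $u(0)=0$. (Alternatively, the classical boundary barriers of \cite{CNS} applied at $0$ give directly a cone-type lower bound $u \ge p(|x'| - q x_n)$.) With this in hand, the triple $(u, p, q, C_0, T_h)$ satisfies all the hypotheses of Lemma \ref{lastlem} with the constant vector $(1,\ldots,1)$ (all the $a_i$ being irrelevant in the $k=0$ class), and Lemma \ref{lastlem} concludes $u \notin \mathcal D_0^\mu(1,\ldots,1,\infty,\ldots,\infty) = \mathcal D_0^\mu(\infty,\ldots,\infty)$, contradicting $u \in \mathcal D_0^\mu$.

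The main obstacle I anticipate is the first step: deducing the uniform sliding-containment $T_h S_h \subset C_0 h^{1/2} B_1^+$ from merely $b(h) \le M$ and the volume bound \eqref{HD3}. The subtlety is that $S_h$ touches $\p\Omega$, so it is only a \emph{half}-ellipsoid-type region, and one must check that the section's center of mass sits at height $\sim h^{1/2}$ on the $x_n$-axis (so that the graph part $G_h$ of $\p S_h$ genuinely lies below) and that the tangential semi-axes cannot be too large — the latter is where the $C^{1,1}$/quadratic-separation information encoded in \eqref{HD3.2}, \eqref{HD5} is used, via the barrier $\mu|x'|^2 \le \varphi$ on $G$. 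This is essentially a rerun of parts (i)–(iii) of Proposition \ref{TW}, specialized to the normalized class $\mathcal D_0^\mu$, and I expect it to go through but require some care with the geometry of half-sections. Once the containment is established, invoking Lemma \ref{lastlem} is immediate, so the real content of the base case is this normalization step plus the cone lower bound.
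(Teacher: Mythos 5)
Your overall plan (compactness reduction, barrier to get a cone lower bound, then invoke Lemma \ref{lastlem}) matches the paper's, but there is a genuine gap in the way you establish the containment $T_h S_h \subset C_0 h^{1/2} B_1^+$. You argue via John's lemma that, since $x_n \le M h^{1/2}$ in $S_h$ and $|S_h| \sim h^{n/2}$, ``the remaining $n-1$ semi-axes are forced to be comparable to $h^{1/2}$ as well.'' That inference is false: the volume bound only controls the \emph{product} of the semi-axes, so individual tangential axes could still be arbitrarily long or short. The quadratic separation \eqref{HD3.2}, \eqref{HD5} on $G$ that you invoke to salvage this cannot help either, because for $k=0$ the set $G$ reduces to the single point $\{0\}$ and provides no constraint on tangential directions. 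So the step you flag as ``the main obstacle'' is indeed not justified as written.

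The fix is already in your own proposal but not connected: the cone lower bound is what produces the containment, not John's lemma. Concretely (as in the paper), one builds the barrier $v = \delta(|x'| + \tfrac12|x'|^2) + \Lambda\delta^{1-n}x_n^2 - Nx_n$, which lies below $\varphi$ on $\p\Omega$ (since $\varphi = 1$ off the single point $G=\{0\}$) and has $\det D^2 v > \Lambda$, so $u \ge v$ in $\Omega$, whence $u \ge \delta|x'| - N x_n$. Then on $S_h$ one has $|x'| \le \delta^{-1}(N x_n + h)$, and since $b(h)\le M$ gives $x_n \le M h^{1/2}$, it follows immediately that $S_h \subset C h^{1/2} B_1^+$ with $T_h = \mathrm{id}$ (which for $k=0$ is the only sliding along the empty $y$-direction). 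No John normalization, no worry about the center of mass, no analysis of tangential semi-axes is needed. With that reordering your argument becomes correct and is essentially the paper's.
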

\begin{proof} By compactness we need to show that there does not exist $u \in \mathcal{D}_0^\mu(\infty, \ldots, \infty)$ with $b(h) \leq M$ for all $h$. If such $u$ exists then $G=\{0\}$. Let
$$v:= \delta (|x'| + \frac 1 2 |x'|^2) + \frac{\Lambda}{\delta^{n-1}} x_n^2 - N x_n$$ with $\delta$ small depending on $\mu$, and $N$ large so that
$$\frac {\Lambda} {\delta^{n-1}} x_n^2 - Nx_n \le 0$$ in $B_{1/\mu}^+.$
Then
$$v \leq \varphi \quad \text{on $\p \Omega$}, \quad \det D^2 v > \Lambda,$$
hence $$v \leq u \quad \text{in $\Omega$}.$$ This gives
$$u \geq \delta |x'| - Nx_n,$$
and we obtain $$S_h \subset \{|x'| \le C(x_n+h)\}.$$ Since $b(h) \le M$ we conclude
$$S_h \subset Ch^{1/2}B_1^+,$$
and we contradict Lemma \ref{lastlem} for $k=0$.

\end{proof}

Now we prove Proposition \ref{HDprop1} by induction on $k$.

\

\textit{Proof of Proposition \ref{HDprop1}.} In this proof we denote by $c$, $C$ positive constants that depend on $M, \mu, \lambda, \Lambda, n$ and $k$.

We assume that the proposition holds for all nonnegative integers up to $k-1$, $1 \le k<n-2$, and we prove it for $k.$ Let $$u \in D_0^\mu(a_1,\ldots,a_k, \infty,\ldots, \infty).$$
By the induction hypotheses and compactness we see that there exists a constant $$C_k(\mu, M, \lambda, \Lambda,n)$$ such that if $a_k \ge C_k$ then $b(h) \ge M$ for some $h \ge C_k^{-1}$. Thus, it suffices to consider only the case when $a_k < C_k$.

If no $c_{k+1}$ exists then we can find a limiting solution that, by abuse of notation, we still denote by $u$ such that
\begin{equation}\label{mubar}
u \in \mathcal{D}_0^{\tilde \mu}(1,1,\ldots, 1, \infty, \ldots, \infty)
\end{equation}
with \begin{equation}\label{HDstar}b(h) \le M h^{1/2}, \quad \forall h>0\end{equation} where $\tilde \mu $ depends on $\mu$ and $C_k.$

We show that such a function $u$ does not exist.

Denote as before $$x= (y,z,x_n), \quad y=(x_1, \ldots, x_k) \in \R^k, \quad z=(x_{k+1}, \ldots, x_{n-1}) \in \R^{n-1-k}.$$
On $\p \Omega$ we have
$$\varphi(x) \geq \delta |x'|^2 + \delta |z| + \frac{\Lambda}{\delta^{n-1}} x_n^2 - N x_n$$ where $\delta$ is small depending on $\tilde \mu$, and $N$ is large so that $$ \frac{\Lambda}{\delta^{n-1}} x_n^2 - N x_n \le 0 $$ in $B_{1/\tilde \mu}^+.$ As before we obtain that the inequality above holds in $\Omega$, hence
\begin{equation}\label{HD2star}u(x) \geq \delta |z| - N x_n.\end{equation}
From \eqref{HDstar}-\eqref{HD2star} we see that the section $S_h$ of $u$ satisfies
\begin{equation}\label{HD3star}S_h \subset \{|z| < \delta^{-1}(Nx_n + h)\} \cap \{x_n \leq Mh^{1/2}\}.\end{equation}

From John's lemma we know that $S_h$ is equivalent to an ellipsoid $E_h$ of the same volume i.e
\begin{equation}\label{JL}
c(n)E_h \subset S_h-x_h^* \subset C(n) E_h, \quad |E_h|=|S_h|,
\end{equation}
with $x_h^*$ the center of mass of $S_h$.

For any ellipsoid $E_h$ in $\R^n$ of positive volume we can find $T_h$, a sliding along the $y$ direction (see Definition \ref{sliding}), such that
\begin{equation}\label{T_h}
T_h E_h = |E_h|^{1/n}A B_1,
\end{equation} with a matrix $A$ that leaves the $(y,0,0)$ and $(0,z,x_n)$ subspaces invariant, and $\det A=1$.
By choosing an appropriate system of coordinates in the $y$ and $z$ variables we may assume in fact that
 $$A(y,z,x_n) = (A_1 y, A_2(z,x_n))$$ with
\[
 A_{1} =
 \begin{pmatrix}
  \beta_{1} & 0 & \cdots & 0 \\
  0 & \beta_{2} & \cdots & 0 \\
  \vdots  & \vdots  & \ddots & \vdots  \\
  0 & 0 & \cdots & \beta_{k}
 \end{pmatrix}
\]
with $0<\beta_1 \le \cdots \le \beta_k$, and

\[
 A_{2} =
 \begin{pmatrix}
  \gamma_{k+1} & 0 & \cdots & 0 &\theta_{k+1} \\
  0 & \gamma_{k+2} & \cdots & 0 & \theta_{k+2} \\
  \vdots  & \vdots  & \ddots & \vdots & \vdots  \\
  0 & 0 & \cdots & \gamma_{n-1} & \theta_{n-1}\\
  0 & 0 & \cdots & 0& \theta_{n}
 \end{pmatrix}
\]
with $\gamma_j$, $\theta_n >0$.

The $h$ section $\tilde S_h=T_hS_h$ of the rescaling
$$\tilde u(x) = u(T_h^{-1}x)$$ satisfies \eqref{HD3star} and since $u \in \mathcal{D}_0^\mu$, there exists $\tilde G_h=G_h$,
$$\tilde G_h\subset \{z=0,x_n=0\} \cap \p \tilde S_h$$ such that on the subspace $(y,0,0)$
$$\{\mu^{-1}|y|^2 \le h\} \subset \tilde G_h \subset \{\mu |y|^2 \le h\},$$
and the boundary values $\tilde \varphi_h$ of $\tilde u$ on $\p \tilde S_h$ satisfy
$$ \tilde \varphi_h=h \quad \text{on $\p \tilde S_h \setminus \tilde G_h$};$$
$$ \mu |y|^2  \leq \tilde \varphi_h \leq  \min \left\{h, \mu^{-1}|y|^2  \right \} \quad \quad \text{on $\tilde G_h$.}$$
Moreover, using that $$|S_h| \sim h^{n/2}$$ in \eqref{JL}, \eqref{T_h} and that $0 \in \p S_h$, we obtain
 \begin{equation}\label{S_h}
 \tilde x_h^* + c h^{1/2} AB_1 \subset \tilde S_h \subset C h^{1/2} AB_1, \quad \det A=1,
 \end{equation}
for the matrix $A$ as above and with $\tilde x_h^*$ the center of mass of $\tilde S_h$.

 Next we use the induction hypothesis and show that $\tilde S_h$ is equivalent to a ball.

\begin{lem} \label{ball}
There exists $C_0$ such that $$ T_h S_h=\tilde S_h \subset C_0 h^{n/2} B_1^+.$$
 \end{lem}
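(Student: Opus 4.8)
The plan is to bootstrap the control on the matrix $A$ coming from John's lemma into a control showing that the $z$-scales $\gamma_j$ are comparable to $h^{1/2}$, which forces the $y$-scales $\beta_i$ to be comparable to $h^{1/2}$ as well, and hence $\tilde S_h$ is equivalent to a ball. First I would record the inclusion \eqref{HD3star}, $\tilde S_h \subset \{|z| < \delta^{-1}(Nx_n+h)\} \cap \{x_n \le Mh^{1/2}\}$ (this holds for $\tilde S_h$ too since $\tilde u$ differs from $u$ by a sliding along $y$, which leaves the $(z,x_n)$ components invariant). Combined with \eqref{S_h}, the component of the ellipsoid $h^{1/2}AB_1$ in the $(z,x_n)$ directions is trapped in a cylinder of size $\sim h^{1/2}$ in $x_n$ and $\sim h^{1/2}$ in $z$ (using $x_n \le Mh^{1/2}$ to bound the $z$-extent), so each $\gamma_j \le C h^{1/2}$ for $j=k+1,\ldots,n-1$ and $\theta_n \le C h^{1/2}$.

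Next I would get the reverse inequality for the $z$-scales. Here is where the induction hypothesis enters: the function $\tilde u$ restricted to the subspace spanned by the first $k$ coordinates — or more precisely a suitable rescaling — together with the fact that $a_{k+1}=\cdots = \infty$ was used to obtain the barrier \eqref{HD2star}, should be compared with a member of the class $\mathcal{D}_0^\mu$ with fewer infinite indices. Concretely, if some $\gamma_j$ were much smaller than $h^{1/2}$, then after rescaling $S_h$ to size one the section would be very thin in the $z_j$ direction; passing to a compactness limit one would produce a solution in $\mathcal{D}_0^{\tilde\mu}(1,\ldots,1,\infty,\ldots,\infty)$ with $k-1$ (or fewer) finite indices for which $b$ stays bounded, contradicting Proposition \ref{HDprop1} at the lower level. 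So all $\gamma_j \ge c h^{1/2}$, and since $\det A_2 = \theta_n \prod \gamma_j = 1 / \det A_1 = 1/\prod\beta_i$ and $\det A = 1$, we get $\prod \beta_i \cdot \theta_n \prod \gamma_j = 1$; combined with $\gamma_j, \theta_n \sim h^{1/2}$... actually since $\det A = 1$ and $A = \mathrm{diag}(A_1,A_2)$ in block form with $\det A_1 \det A_2 = 1$, and $\det A_2 = \theta_n\prod_{j}\gamma_j \sim h^{(n-k)/2} \cdot$(const), I get $\det A_1 = \prod \beta_i \sim h^{-(n-k)/2}$. Wait — that needs $|E_h| \sim h^{n/2}$ to be folded in properly: the ellipsoid is $h^{1/2}AB_1$ with $\det A = 1$, so $|{\tilde S_h}| \sim h^{n/2}$ is automatic, and the constraint is that the $A_1$-block must compensate. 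Using $\{\mu^{-1}|y|^2 \le h\} \subset \tilde G_h \subset \partial \tilde S_h$ on the $y$-subspace gives $\beta_i \ge c h^{1/2}$ (the section contains a $y$-ball of radius $\sim h^{1/2}$), hence $\prod\beta_i \ge c h^{k/2}$; but also $\prod \beta_i = 1/(\theta_n \prod\gamma_j)$, so bounding $\theta_n \prod \gamma_j$ above and below by $\sim h^{(n-k)/2}$ pins $\prod\beta_i \sim h^{k/2}$, whence each $\beta_i \sim h^{1/2}$. Then $A$ is comparable to $h^0$ times... more carefully, $h^{1/2}A$ has all singular values comparable to $h^{1/2}$, so \eqref{S_h} yields $\tilde S_h \subset C h^{1/2} B_1^+$, which is the claim (reading $C_0 h^{n/2}$ as a typo for $C_0 h^{1/2}$, consistent with the statement of Lemma \ref{lastlem}).

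The main obstacle I expect is the middle step: extracting the lower bound $\gamma_j \ge c h^{1/2}$ from the induction hypothesis in a clean way. One must set up the rescaling $w(x) = h^{-1} \tilde u(h^{1/2} A^{-1} x)$ (or a variant), verify it lands in the appropriate class $\mathcal{D}_{\sigma'}^{\mu'}$ with the correct number of large/infinite indices — the indices $k+1,\ldots,n-1$ stay infinite, and a $\gamma_j/h^{1/2} \to 0$ behaves like making one more index infinite, lowering the effective $k$ — and then invoke Proposition \ref{HDprop1} for that smaller $k$ together with the standing assumption \eqref{HDstar} that $b(h) \le M h^{1/2}$ for all $h$, which is inherited by $w$ under the rescaling (property 2 of $b$, since sliding along $y$ and the diagonal-plus-shear $A$ interact with the $x_n$-coordinate in a controlled way — one needs $A$ to rescale $x_n$ by $\theta_n \sim h^{1/2}$, so $b_w$ relates to $b_{\tilde u}$ with an extra harmless factor). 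The bookkeeping of which class the limit lands in, and checking the barrier \eqref{HD2star} survives the rescaling to keep the $z$-direction non-degenerate, is the delicate part; everything else is John's lemma plus the volume normalization \eqref{HD3}.
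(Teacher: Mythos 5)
Your reading of $C_0 h^{n/2}$ as a typo for $C_0 h^{1/2}$ is correct, and the overall strategy --- bound the entries of $A$ uniformly by combining \eqref{HD3star}, \eqref{S_h}, the determinant constraint, and the induction hypothesis --- is the paper's strategy. However, there are two real problems with the execution.

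First, the normalization is confused. Since \eqref{S_h} reads $\tilde x_h^* + ch^{1/2}AB_1 \subset \tilde S_h \subset Ch^{1/2}AB_1$ with $\det A = 1$, the factor $h^{1/2}$ is already explicit, and the entries $\beta_i, \gamma_j, \theta_n$ of $A$ are dimensionless quantities whose uniform bounds should be $c \le \cdot \le C$, not $\sim h^{1/2}$. You half-notice this (``Wait --- that needs $|E_h|\sim h^{n/2}$ to be folded in properly\dots''), but the subsequent bookkeeping ($\prod\gamma_j \sim h^{(n-k)/2}$, $\prod\beta_i \sim h^{k/2}$, etc.) never gets corrected and would not survive being written out.

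Second, and more seriously, you never establish the lower bound $\theta_n \geq c_\star$ (equivalently $b_u(h) \geq c_\star$). You go straight to $\gamma_j \ge c$ and then conclude ``$\prod\beta_i \sim 1$'' from $\det A = 1$. But $\det A = 1$ together with $\gamma_j \geq c$, $\beta_i \geq c$, and the upper bounds $\gamma_j,\theta_n \leq C$ does \emph{not} force $\beta_i \leq C$: one has $\prod\beta_i = (\theta_n\prod\gamma_j)^{-1}$, which blows up if $\theta_n$ is small. The paper's proof therefore has two separate claims, in a specific order: first $\theta_n \geq c_\star$ (so that $b_u(h)$ is bounded below), and only \emph{then} $\gamma_j \geq c$, where the threshold for applying the induction hypothesis in the second claim actually depends on $c_\star$ from the first. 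Both claims run through the quotient identity $\frac{b_u(h\tilde h)}{b_u(h)} = \frac{b_w(\tilde h)}{b_w(1)}$ --- not through the picture you describe, in which the compactness limit simply ``has $b$ bounded''. In fact $b_w$ need not be uniformly bounded in $h$ a priori; what the paper exploits is that the induction hypothesis forces a large multiplicative jump $b_w(\tilde h)/b_w(1) \geq 2$ or $\geq 2M/c_\star$, which translates back to a multiplicative jump for $b_u$, contradicting the standing bound $b_u \leq M$. That transfer is where the ``sliding along $y$'' structure of $T_h$ and the block structure of $A$ (both leaving $\{x_n=0\}$ invariant, so properties 2--4 of $b$ apply) is doing the work, and it needs to be made explicit rather than waved through.
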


\begin{proof}
We need to show that $$|A| \le C.$$

Since $\tilde S_h$ satisfies \eqref{HD3star} we see that
$$\tilde S_h \subset \{ |(z,x_n)| \le C h^{1/2}\},$$
which together with the inclusion \eqref{S_h} gives $|A_2| \le C$ hence
$$\gamma_j, \theta_n \leq C, \quad |\theta_j| \leq C.$$
Also, since $$\tilde G_h \subset \tilde S_h,$$ we find from \eqref{S_h} $$\beta_i \geq c >0, \quad  i=1,\cdots,k.$$

We define the rescaling
$$w(x) = \frac 1 h \tilde u (h^{1/2} Ax)$$ defined in a domain $\Omega_w=S_1(w)$. Then \eqref{S_h} gives
$$B_c(x_0) \subset \Omega_w \subset B^+_C, $$
and $w=\varphi_w$ on $\p \Omega_w$ with
$$\varphi_w=1 \quad \text{on $\p \Omega_w \setminus G_w$},$$
$$\tilde \mu \sum_1^k \beta_i^2 x_i^2 \leq \varphi_w \leq \min \{1, \tilde \mu^{-1} \sum_1^k \beta_i^2 x_i^2\} \ \quad \text{on $G_w:=h^{-1/2}A^{-1}\tilde G_h$}.$$
This implies that $$w \in \mathcal{D}^{\bar \mu}_0(\beta_1, \beta_2, \ldots, \beta_k, \infty, \ldots, \infty)$$ for some small $\bar {\mu}$ depending on $\mu, M, \lambda, \Lambda, n,k$.

We claim that $$b_u(h) \ge c_\star.$$ First we notice that $$b_u(h)=b_{\tilde u}(h) \sim \theta_n.$$

Since
$$\theta_n \prod \beta_i \prod \gamma_j  =\det A=1$$ and $$\gamma_j \leq C,$$ we see that if $b_u(h)$ (and therefore $\theta_n$) becomes smaller than a critical value $c_*$ then
$$\beta_k \geq C_k(\bar \mu, \bar M, \lambda, \Lambda,n),$$
with $\bar M:=2 \bar \mu^{-1}$, and by the induction hypothesis
$$b_w(\tilde h) \geq \bar M \ge 2 b_w(1)$$ for some $\tilde h > C_k^{-1}$. This gives
$$\frac{b_u(h \tilde h)}{b_u(h)}=\frac{b_w(\tilde h)}{b_w(1)} \ge 2,$$
which implies $b_u(h \tilde h) \ge 2 b_u(h)$ and our claim follows.

Next we claim that $\gamma_j$ are bounded below by the same argument. Indeed, from the claim above $\theta_n$ is bounded below and if some $\gamma_j$ is smaller than a small value $\tilde c_*$ then
$$\beta_k \geq C_k(\bar \mu, \bar M_1,\lambda, \Lambda,n)$$ with $$\bar M_1:=\frac{2M}{\bar \mu c_\star}.$$ By the induction hypothesis $$b_w(\tilde h) \geq \bar M_1 \geq \frac{2M}{c_\star} b_w(1),$$ hence
$$\frac{b_u(h \tilde h)}{b_u(h)} \geq \frac{2M}{c_\star}$$ which gives $b_u(h \tilde h) \ge 2M$, contradiction.
In conclusion $\theta_n$, $\gamma_j$ are bounded below which implies that $\beta_i$ are bounded above. This shows that $|A|$ is bounded and the lemma is proved.

\end{proof}

{\it End of the proof of Proposition \ref{HDprop1}.}

The proof is finished since Lemma \ref{ball}, \eqref{mubar}, \eqref{HD2star} contradict Lemma \ref{lastlem}.

\qed

\section{Pogorelov estimate in half-domain}

In this section we obtain a version of Pogorelov estimate at the boundary (Theorem \ref{pe} below). A similar estimate was proved also in \cite{TW}. We start with the following a priori estimate.

\begin{prop}\label{pe0}
Let $u:\bar \Omega \to \R$, $u \in C^4(\bar \Omega)$ satisfy the Monge-Ampere equation $$\det D^2 u=1 \quad \mbox{in $\Omega$}.$$ Assume that for some constant $k>0$,
$$B_k^+ \subset \Omega \subset B^+_{k^{-1}},$$
and
$$\left \{
\begin{array}{l}
u=\frac 12 |x'|^2 \quad \mbox{on} \quad \p \Omega \cap \{x_n=0\}\\
u=1 \quad \quad \mbox{on} \quad \p \Omega \cap \{x_n>0\}.
\end{array}
\right.
$$
Then
$$\|u\|_{C^{3,1}(\{u<\frac{1}{16}k^2\})} \le C(k,n).$$
\end{prop}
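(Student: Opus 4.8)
The plan is to run a Pogorelov-type argument adapted to the half-domain, working with the auxiliary function that combines a pure second derivative with weight factors that vanish on the boundary where the data is flat. Since $u$ is assumed $C^4$ up to the closure and the equation is $\det D^2u=1$, one can differentiate the equation twice and use that $u^{ij}$ (the inverse of the Hessian, which appears as the coefficient matrix of the linearized operator $L=u^{ij}\partial_{ij}$) is positive definite. The natural quantity is
$$w = (\text{weight}) \cdot e^{\frac12 |\nabla u|^2}\, u_{\tau\tau},$$
where $u_{\tau\tau}$ is a second derivative in a direction $\tau$, and the weight is chosen to make $w$ vanish on $\partial\{u<\tfrac1{16}k^2\}$. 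The classical interior choice of weight is $(h_0-u)$ for the section $\{u<h_0\}$; here, because part of the boundary carries the nontrivial data $u=\tfrac12|x'|^2$, the weight must be modified near $\{x_n=0\}$ so that $w$ still vanishes on the full boundary of the section — roughly, one replaces $(h_0-u)$ by something like $(h_0-u)$ times a power of $x_n$, or subtracts off the boundary quadratic before forming $u_{\tau\tau}$. The inclusion $B_k^+\subset\Omega\subset B^+_{k^{-1}}$ together with $\det D^2u=1$ gives, via the Alexandrov estimate and barriers, two-sided bounds $0<c(k)\le u_{nn}$ and $|\nabla u|\le C(k)$ on $\{u<\tfrac1{16}k^2\}$, which is what keeps the weight and the exponential factor under control.

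The key steps, in order, are: (1) establish the a priori $C^1$ bound and the lower bound on the normal second derivative $u_{nn}\ge c(k)$ on the relevant section, using the explicit barriers $\tfrac12|x'|^2 \pm C(k)x_n + (\text{const})x_n^2$ and the prescribed boundary values; (2) form the test function $w$ with the boundary-adapted weight and locate its maximum over the closed section $\overline{\{u<\tfrac1{16}k^2\}}$ — the point being to show the maximum cannot be attained on the boundary (where $w=0$ or where a direct computation using the boundary data controls it) so it occurs at an interior point; (3) at that interior maximum, apply the maximum principle for the linearized operator $L$: compute $L\log w\le 0$, expand using the twice-differentiated equation ($u^{ij}u_{ij\tau\tau} = u^{is}u^{jt}u_{ij\tau}u_{st\tau} \ge 0$, the standard concavity/convexity identity), and the contributions of the exponential factor $e^{\frac12|\nabla u|^2}$ (which was inserted precisely to absorb the bad third-order cross terms), to derive an algebraic inequality bounding $w$ at that point by $C(k,n)$; (4) conclude a bound on $u_{\tau\tau}$ hence on $\|D^2u\|$ on a slightly smaller section, and finally bootstrap: with $D^2u$ bounded and bounded below, the equation is uniformly elliptic with $C^2$ coefficients, so Schauder theory upgrades the estimate to the claimed $C^{3,1}$ bound (equivalently, $C^{2,\alpha}$ for every $\alpha$, with the stated constant depending on $k,n$).

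The main obstacle I expect is step (2)–(3): choosing the weight so that $w$ genuinely vanishes on the portion of $\partial\{u<\tfrac1{16}k^2\}$ lying in $\{x_n=0\}$ where $u=\tfrac12|x'|^2$ is \emph{not} constant, while simultaneously keeping the extra terms that the non-standard weight contributes to $L\log w$ of the right (favorable) sign or at least absorbable. In the pure interior Pogorelov estimate the weight $(h_0-u)$ is harmless; here the modification interacts with the cross terms $u^{ij}\partial_i(\log\text{weight})\,\partial_j(\log w)$ at the maximum, and controlling these is exactly where the geometric hypothesis $B_k^+\subset\Omega\subset B_{k^{-1}}^+$ and the lower bound $u_{nn}\ge c(k)$ from step (1) have to be used quantitatively. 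Once the correct test function is identified, the rest is the routine (if lengthy) Pogorelov computation together with the final Schauder bootstrap, both of which I would not write out in detail here.
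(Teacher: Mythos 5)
Your outline shares the paper's overall shape (gradient bound, second-derivative bound, Pogorelov, Schauder bootstrap), but the crucial middle step is different and, as you present it, unresolved. You propose to modify the Pogorelov weight so that the test function vanishes on the full boundary of the section, including the flat part $\{x_n=0\}$ where $u=\tfrac12|x'|^2$ is not constant, and you correctly flag that controlling the extra terms this modification introduces into $L\log w$ is the sticking point — but you never produce a weight that works. This is the gap: the modified-weight route is genuinely delicate, and you have not closed it.

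The paper avoids this difficulty entirely by first proving a \emph{boundary} second-derivative estimate and then running the \emph{unmodified} Pogorelov argument. Concretely, after the gradient bound (your step (1), which matches Step~1 of the paper), the paper's Step~2 bounds $\|D^2u\|$ on $E:=\{x_n=0\}\cap\{|x'|\le k/2\}$. The tangential second derivatives $u_{ij}=\delta_{ij}$ ($i,j<n$) are read off from the boundary data, so only the mixed derivatives $u_{in}$ need control; this is done by observing that $u_i$ solves the linearized equation $Lu_i=u^{jk}(u_i)_{jk}=0$ and comparing it with explicit barrier functions of the form $v_{x_0}=x_i+\gamma_1\bigl[\delta|x'-x_0|^2+\delta^{1-n}(x_n^2-\gamma_2 x_n)-(u-l_{x_0})\bigr]$, which are sub/supersolutions of $L$ touching $u_i$ at $x_0$. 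With this boundary $D^2$ bound in hand, the paper applies the \emph{classical} Pogorelov quantity $\log(\tfrac14 k^2-u)+\log u_{ii}+\tfrac12 u_i^2$ on $F=\{u<k^2/4\}$: if its maximum is interior, the standard computation controls it; if it is on $\p F\cap\{u=k^2/4\}$, the weight factor kills it; and if it is on $\{x_n=0\}$, Step~2 controls it. No modification of the weight is needed. (Also note that your claimed a~priori lower bound $u_{nn}\ge c(k)$ in step (1) is not actually available before the tangential second derivatives are bounded above — it is an output of Step~2 plus $\det D^2u=1$, not an input — so be careful not to use it circularly in constructing a modified weight.)

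In short, the missing idea is the boundary second-derivative estimate for $u_{in}$ via barriers for the linearized operator; once you have it, you do not need a modified Pogorelov weight at all. Your steps (1) and (4) are sound and match the paper's Steps~1 and~4.
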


\begin{proof} We divide the proof into four steps.

{\it Step 1:} We show that $$|\nabla u|\le C(k,n) \quad \mbox{in the set} \quad D:=\{u< k^2 /2 \}.$$

For each $$x_0 \in \{|x'|\le k, \quad x_n=0\},$$ we consider the barrier
$$w_{x_0}(x):=\frac 1 2 |x_0|^2 + x_0 \cdot (x-x_0) + \delta |x'-x_0|^2 + \delta^{1-n}(x_n^2-k^{-1}x_n),$$
where $\delta$ is small so that
$$w_{x_0} \le 1 \quad \mbox{in} \quad B^+_{k^{-1}}.$$ Then
$$w_{x_0} (x_0) = u (x_0), \quad w_{x_0} \le u \quad \mbox{on} \quad \p \Omega \cap \{x_n=0\},$$
$$w_{x_0} \le 1 =u  \quad \mbox{on} \quad \p \Omega \cap \{x_n>0\}, $$
and
$$\det D^2 w_{x_0} >1,$$
thus in $\Omega$
$$u \ge w_{x_0} \ge u(x_0) + x_0 \cdot (x-x_0) -\delta^{1-n}k^{-1}x_n.$$

This gives a lower bound for $u_n(x_0)$. Moreover, 
writing the inequality for all $x_0$ with $|x_0|=k$ we obtain $$D \subset \{x_n \ge c(|x'|-k) \}.$$ 
From the values of $u$ on $\{x_n=0\}$ and the inclusion above 
we obtain a lower bound on $u_n$ on $\p D $ in a neighborhood of $\{x_n=0\}$. Since $\Omega$ contains the cone generated by $k e_n$ and $\{|x'|\le 1, x_n=0\}$ and $u \le 1$ in $\Omega$, we can use the convexity of $u$ and obtain also an upper bound for $u_n$ and all $|u_i|$, $1\le i \le n-1$,  on $\p D $ in a neighborhood of $\{x_n=0\}$. We find $$|\nabla u| \le C \quad \mbox{on} \quad \p D \cap \{x_n \le c_0\},$$ where $c_0>0$ is a small constant depending on $k$ and $n$.
We obtain a similar bound on $\p D \cap \{x_n \ge c_0\}$ by bounding below $$dist(\p D \cap \{x_n \ge c_0\}, \p \Omega)$$ by a small positive constant.
Indeed, if $$y \in \p \Omega \cap \{x_n \ge c_0/2\},$$ then there exists a linear function $l_y$ with bounded gradient so that
$$u(y)=l_y(y), \quad u \ge l_y \quad \mbox{on} \quad \p \Omega.$$ Then, using Alexandrov estimate for $(u-l_y)^-$ we obtain $$u(x) \ge l_y(x)-Cd(x)^{1/n}, \quad \quad d(x):=dist(x,\p \Omega)$$ hence $D$ stays outside a fixed neighborhood of $y$.

\

{\it Step 2:} We show that $$\|D^2 u\| \le C(k,n) \quad \mbox{on} \quad E:=\{x_n=0\} \cap \{|x'|\le k/2\}.$$

It suffices to prove that $|u_{in}|$ are bounded in $E$ with $i=1,..,n-1$. Let $$L\, \varphi:=u^{ij} \varphi_{ij}$$ denote the linearized Monge-Ampere operator for $u$. Then
\begin{align*}
L\,u_i&=0, \quad \quad u_i=x_i \quad \mbox{on} \quad \{x_n=0\},\\
L\,u&=n,
\end{align*}
and if we define $P(x)=\delta |x'|^2+\delta^{1-n}x_n^2$ then
\begin{align*}
L\, P&=Tr\left ((D^2u)^{-1}D^2P \right) \\
& \ge n \left( \det (D^2u)^{-1} \det D^2P\right)^\frac 1n \\
& \ge n.
\end{align*}

Fix $x_0 \in E$. We compare $u_i$ and
$$v_{x_0}(x):=x_i + \gamma_1 \left [ \delta|x'-x_0|^2+\delta^{1-n}(x_n^2-\gamma_2 x_n) - (u-l_{x_0})\right ],$$
where $l_{x_0}$ denotes the supporting linear function for $u$ at $x_0$, $\delta=1/4$, and $\gamma_1$, $\gamma_2 \ge 0$. Clearly, $$L \, v_{x_0} \ge 0,$$ and, since $u$ is Lipschitz in $D$ we can choose $\gamma_1$, $\gamma_2$ large, depending only on $k$ and $n$ such that
$$v_{x_0} \le u_i \quad \mbox{on}\quad \p D.$$
This shows that the inequality above holds also in $D$ and we obtain a lower bound on $u_{in}(x_0)$. Similarly we obtain an upper bound.

\

{\it Step 3:} We show that $$\|D^2 u\| \le C \quad \mbox{on} \quad \{u<  k^2/8\}.$$ We apply the classical Pogorelov estimate in the set $$F:=\{u<k^2/4 \}.$$ Precisely if the maximal value of $$\log \left ( \frac 1 4 k^2 -u \right ) + \log u_{ii} + \frac 1 2 u_i^2$$ occurs in the interior of $F$ then this value is bounded by a constant depending only on $n$ and $\max_F|\nabla u|$ (see \cite{C2}). From step 2, the expression is bounded above on $\p F$ and the estimate follows.

\

{\it Step 4:} The Monge-Ampere equation is uniformly elliptic in $\{u < k^2 / 8 \}$ and by Evans-Krylov theorem and Schauder estimates we obtain the desired $C^{3,1}$ bound.

\end{proof}

\begin{rem}\label{rem61} Assume the boundary values of $u$ are given by $$\left \{
\begin{array}{l}
u=p(x') \quad \mbox{on} \quad \p \Omega \cap \{x_n=0\}\\
u=1 \quad \quad \mbox{on} \p \Omega \cap \{x_n>0\},
\end{array}
\right.
$$
with $p(x')$ a quadratic polynomial that satisfies $$\rho |x'|^2 \le p(x') \le \rho^{-1} |x'|^2,$$
for some $\rho>0$. Then
$$\|u\|_{C^{3,1}(\{u<\frac{1}{16}k^2\})} \le C(\rho,k,n).$$

Indeed, after an affine transformation we can reduce the problem to the case $p(x')=|x'|^2/2$.
\end{rem}

\begin{rem}\label{rem62} Proposition \ref{pe0} holds as well if we replace the half-space $\{x_n \ge 0\}$ with a large ball of radius $\eps^{-1}$
$$ \mathcal{B}_\eps:=\{ \, |x-\eps^{-1}e_n| \le \eps^{-1}\}.$$

Precisely, if
$$ B_k \cap \mathcal{B}_\eps \subset \Omega \subset B_{k^{-1}} \cap \mathcal{B}_\eps,$$
and the boundary values of $u$ satisfy
$$\left \{
\begin{array}{l}
u=\frac 12 |x'|^2 \quad \mbox{on} \quad B_1 \cap \p \mathcal{B}_\eps \subset \p \Omega \\
u \in [1,2] \quad \quad \mbox{on} \quad \p \Omega \setminus (B_1 \cap \p \mathcal{B}_\eps),
\end{array}
\right.
$$
then for all small $\eps$,
$$\|u\|_{C^{3,1}(\{u<k^2/16 \})} \le C,$$
with $C$ depending only on $k$ and $n$.

The proof is essentially the same except that in the barrier functions $w_{x_0}$, $v_{x_0}$ we need to replace $x_n$ by $(x-x_0)\cdot \nu_{x_0}$ where $\nu_{x_0}$ denotes the inner normal to $\p \Omega$ at $x_0$, and in step 2 we work (as in \cite{CNS}) with the tangential derivative
$$T_i:=(1-\eps x_n)\p _{x_i}+\eps x_i \p_{x_n},$$
instead of $\p_{x_i}$.
\end{rem}

As a consequence of the Proposition \ref{pe0} and the remarks above we obtain

\begin{thm}\label{pe}
Let $u: \Omega \to \R$ satisfy the Monge-Ampere equation $$\det D^2 u=1 \quad \mbox{in $\Omega$}.$$ Assume that for some constants $\rho,k>0$,
$$B_k^+ \subset \Omega \subset B^+_{k^{-1}},$$
and (see Definition \ref{bv2}) the boundary values of $u$ are given by
$$\left \{
\begin{array}{l}
u=p(x') \quad \mbox{on} \quad \{p(x') \le 1\} \cap \{x_n=0\} \subset \p \Omega\\
u=1 \quad \quad \mbox{on the rest of $\p \Omega$,}
\end{array}
\right.
$$
where $p$ is a quadratic polynomial that satisfies $$\rho|x'|^2 \le p(x') \le \rho^{-1}|x'|^2.$$

Then
\begin{equation}\label{c4b}
\|u\|_{C^{3,1}(B^+_{c_0})} \le c_0^{-1},
\end{equation}
with $c_0>0$ small, depending only on $k$, $\rho$ and $n$.
\end{thm}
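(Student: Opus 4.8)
The plan is to re-run the proof of Proposition \ref{pe0} (equivalently, of Remark \ref{rem61}); the only new feature is that the boundary datum is prescribed only on $\{p(x')\le 1\}\cap\{x_n=0\}$, and that by Definition \ref{bv2} the actual boundary value of $u$ is the convex envelope $\varphi^*$ of the nominal datum $\varphi$, which on the ``curved'' part of $\p\Omega$ may lie strictly below $1$. First, as in Remark \ref{rem61}, I would normalize $p$: writing $p(x')=\tfrac12 x'^TQx'$ with $2\rho I\le Q\le 2\rho^{-1}I$, the linear map sending $(x',x_n)$ to $(Q^{1/2}x',(\det Q)^{-1/2}x_n)$ has determinant $\pm1$, hence preserves $\det D^2u=1$, preserves $\{x_n=0\}$ and $\{x_n>0\}$, carries $p$ to $\tfrac12|x'|^2$ and $\{p\le1\}$ to $\{|x'|\le\sqrt2\}$, and turns $B_k^+\subset\Omega\subset B_{k^{-1}}^+$ into analogous inclusions with a new $k=k(\rho,k,n)$. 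So from now on $p(x')=\tfrac12|x'|^2$, with $u=\tfrac12|x'|^2$ on $\{|x'|\le\sqrt2\}\cap\{x_n=0\}$ and $u=1$ on the rest of $\p\Omega$; note also that since $B_k^+\subset\Omega$ is open, $\p\Omega\cap B_k=\{x_n=0\}\cap B_k$, so $\p\Omega$ is flat near the origin.

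The device that disposes of the convex envelope is the following remark: if $w$ is convex on $\R^n$, satisfies $w\le\varphi$ pointwise on $\p\Omega$ (with $\varphi$ the \emph{un-convexified} datum above), and $\det D^2w\ge1$ in $\Omega$, then $w\le u$ in $\Omega$. Indeed the upper graph of $\varphi$ is then contained in the convex set $\{x_{n+1}\ge w(x)\}$, hence so is its convex hull; restricting to $\p\Omega\times\R$ gives $w\le\varphi^*$ on $\p\Omega$, and the comparison principle following Definition \ref{bv2} finishes. With this in hand I would take the barriers $w_{x_0}$ and $v_{x_0}$ from the proof of Proposition \ref{pe0}, but only for base points $x_0$ in a small flat disk $\{|x'|\le r_0,\ x_n=0\}$ about the origin, with $r_0$ and the parameter $\delta$ small depending on $k,n$. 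For such $x_0$ one checks, exactly as there, that $w_{x_0}$ is convex on $\R^n$, has $\det D^2w_{x_0}>1$, equals $u(x_0)=\tfrac12|x_0|^2$ at $x_0$, and satisfies $w_{x_0}\le\varphi$ on all of $\p\Omega$: on the flat part where $p\le1$ this is the inequality $\tfrac12|x'-x_0|^2\ge\delta|x'-x_0|^2$, and on the remaining part of $\p\Omega$ (where $\varphi=1$, including the flat piece $\{|x'|>\sqrt2\}$ if present) smallness of $r_0,\delta$ gives $w_{x_0}\le1$. Comparing with $u$ and using $\varphi^*(x_0)\le\varphi(x_0)=\tfrac12|x_0|^2$ also shows $u=\tfrac12|x'|^2$ on the small flat disk, so the tangential second derivatives of $u$ there are those of $p$.

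With these ingredients the four steps of Proposition \ref{pe0} go through in a fixed small half-ball around $0$: Step 1 (the barriers $w_{x_0}$, together with the convexity of $u$ and the bound $u\le1$ on the cone contained in $B_k^+$) gives $|\nabla u|\le C$ near $0$; Step 2 (the linearized Monge--Ampère operator together with the barriers $v_{x_0}$, both used only near $0$) gives $\|D^2u\|\le C$ on the flat boundary near $0$; Step 3 is the classical interior Pogorelov estimate, fed this second-derivative bound on the relevant boundary piece; Step 4 is Evans--Krylov and Schauder. This produces $\|u\|_{C^{3,1}(B^+_{c_0})}\le c_0^{-1}$ with $c_0=c_0(k,\rho,n)$.

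The main obstacle is precisely the point of the second paragraph: on the curved part of $\p\Omega$ one only controls $u|_{\p\Omega}=\varphi^*\le1$, not $u=1$, so the naive domination $w_{x_0}\le1=u$ used in Proposition \ref{pe0} is unavailable; insisting that every barrier be globally convex and dominated by the pointwise datum $\varphi$ repairs this automatically. The secondary, purely bookkeeping, point is to verify that each of the four steps can be localized to a neighborhood of $0$ lying over $\{p\le1\}\cap\{x_n=0\}$ — this is immediate, since each step is assembled from the above barriers together with interior Pogorelov/Evans--Krylov estimates, none of which sees the far-away boundary.
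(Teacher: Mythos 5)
Your observations about the convex envelope and the barriers $w_{x_0}$ are correct and are a genuine (if small) point, but they address a secondary concern while missing the main obstruction: Proposition~\ref{pe0} is an \emph{a priori} estimate, stated and proved under the hypothesis $u\in C^4(\bar\Omega)$. The function $u$ in Theorem~\ref{pe} is only an Alexandrov/viscosity solution with boundary values in the sense of Definition~\ref{bv2}; it is not known in advance to be $C^2$ (let alone $C^4$) up to the flat boundary piece, and indeed its boundary regularity there is precisely what the theorem is asserting. Step~1 of Proposition~\ref{pe0} (barriers) does work in the viscosity sense, but Step~2 differentiates the equation to get $Lu_i=0$ with $L=u^{ij}\p_i\p_j$, and Step~3 (the Pogorelov estimate) applies the maximum principle to $\log(\tfrac14 k^2-u)+\log u_{ii}+\tfrac12 u_i^2$; both computations require the pointwise existence and positivity of $D^2u$ up to $\{x_n=0\}$, which is not available. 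Your claim that ``the four steps go through'' for $u$ itself therefore begs the question.

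The paper resolves this in the standard way: one approximates $u$ by smooth solutions $u_m$ on smooth, \emph{uniformly convex} domains $\Omega_m$ (this is the reason for Remark~\ref{rem62}, which replaces the half-space by a large ball $\mathcal{B}_\eps$ so that the approximating domains can be made uniformly convex), invokes \cite{CNS} to get $u_m\in C^\infty(\bar\Omega_m)$, applies the a priori $C^{3,1}$ bound of Proposition~\ref{pe0}/Remark~\ref{rem62} to each $u_m$, and passes to the limit. Your normalization making $\p\Omega$ flat near $0$ is harmless but does nothing toward this; in fact the flat boundary piece is exactly what prevents a direct appeal to \cite{CNS}. To repair the argument you would need to add an approximation layer of this kind, at which point it collapses into the paper's proof, and you would also need the curved-boundary version of the barriers (the modifications indicated in Remark~\ref{rem62}, e.g.\ replacing $x_n$ by $(x-x_0)\cdot\nu_{x_0}$ and $\p_{x_i}$ by the tangential derivative $T_i$), not merely the half-space barriers of Proposition~\ref{pe0}.
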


\begin{proof}
We approximate $u$ on $\p \Omega$ by a sequence of smooth functions $u_m$ on $\p \Omega_m$, with $\Omega_m$ smooth, uniformly convex, so that $u_m$, $\Omega_m$ satisfy the conditions of Remark \ref{rem62} above. Notice that $u_m$ is smooth up to the boundary by the results in \cite{CNS}, thus we can use Proposition \ref{pe0} for $u_m$. We let $m \to \infty$ and obtain \eqref{c4b} since $$B_{c_0}^+ \subset \{u<k^2/16\},$$ by convexity.

\end{proof}

\section{Pointwise $C^{2, \alpha}$ estimates at the boundary}

Let $\Omega$ be a bounded convex set with
\begin{equation}\label{om_ass6}
B_\rho(\rho e_n) \subset \, \Omega \, \subset \{x_n \geq 0\} \cap B_{\frac 1\rho},
\end{equation}
for some small $\rho>0$, that is $\Omega \subset (\R^n)^+$ and $\Omega$ contains an interior ball tangent to $\p \Omega$ at $0.$

Let $u : \overline \Omega \rightarrow \R$ be convex, continuous, satisfying
\begin{equation}\label{eq_u6}
\det D^2u =f, \quad \quad 0 <\lambda \leq f \leq \Lambda \quad \text{in $\Omega$},
\end{equation}
and
\begin{equation}\label{eq_u16}
\mbox{$x_{n+1}=0$ is a tangent plane to $u$ at $0$,}
\end{equation}
in the following sense:
$$u \geq 0, \quad u(0)=0,$$
and any hyperplane $x_{n+1}= \eps x_n$, $\eps>0$ is not a supporting plane for $u$.

We also assume that on $\p \Omega$, in a neighborhood of $\{x_n=0\}$, $u$ separates quadratically from the tangent plane $\{x_{n+1}=0\}$,
\begin{equation}\label{commentstar6}\rho |x|^2 \leq u(x) \leq \rho^{-1} |x|^2 \quad \text{on $\p \Omega \cap \{x_n \leq \rho\}.$}\end{equation}

Our main theorem is the following.

\begin{thm}\label{c2alpha}
Let $\Omega$, $u$ satisfy \eqref{om_ass6}-\eqref{commentstar6} above with $f \in C^\alpha$ at the origin, i.e $$|f(x)-f(0)| \le M|x|^\alpha \quad \mbox{in} \quad  \Omega \cap B_\rho,$$
for some $M>0$, and $\alpha \in (0,1)$. Suppose that $\p \Omega$ and $u|_{\p \Omega}$ are $C^{2, \alpha}$ at the origin, i.e we assume that
on $\p \Omega \cap B_\rho$ we satisfy
$$ |x_n-q(x')|\le M|x'|^{2+\alpha},$$
$$ |u-p(x')|\le M|x'|^{2+\alpha},$$
where $p(x')$, $q(x')$ are quadratic polynomials.

Then $u \in C^{2, \alpha}$ at the origin, that is there exists a quadratic polynomial $\mathcal{P}_0$ with
$$\det D^2 \mathcal{P}_0=f(0), \quad \|D^2 \mathcal{P}_0\| \le C(M),$$
such that
$$|u-\mathcal{P}_0|\le C(M)|x|^{2+\alpha} \quad \mbox{in} \quad  \Omega \cap B_\rho,$$ where $C(M)$ depends on $M$, $\rho$, $\lambda$, $\Lambda$, $n$, $\alpha$.
\end{thm}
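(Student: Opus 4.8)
The plan is a perturbative iteration in the spirit of Caffarelli's interior estimate \cite{C2}: at each scale one approximates $u$ by the solution of a \emph{constant} right hand side Monge-Amp\`ere equation with quadratic boundary data in a half-ball --- for which the half-domain Pogorelov estimate (Theorem \ref{pe}) provides a priori $C^{3,1}$ regularity --- and then propagates closeness to a fixed quadratic polynomial down to all scales. After a dilation in the $x_{n+1}$ variable we may take $f(0)=1$. Fix $h_0=c(\rho)$ small; by the Localization Theorem \ref{main_loc} (whose hypothesis \eqref{commentstar} is exactly \eqref{commentstar6}, with $\mu=\rho$) each section $S_h=\{u<h\}$, $h\le h_0$, is comparable to $A_h^{-1}(h^{1/2}B_1)\cap\overline\Omega$ for a sliding $A_h$. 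I would then introduce the rescaling
$$u_h(x):=\frac1h\,u\bigl(h^{1/2}A_h^{-1}x\bigr),\qquad x\in\Omega_h:=h^{-1/2}A_h\Omega .$$
Then $\det D^2u_h=f_h$ with $\lambda\le f_h\le\Lambda$ and $f_h\to1$; $\Omega_h\to\{x_n\ge0\}$ locally and $\{u_h<1\}$ is comparable to $B_1^+$; the portion of $\partial\{u_h<1\}$ lying on $\partial\Omega_h$ is a graph over a neighborhood of $0$ in $\{x_n=0\}$, flattening onto $\{x_n=0\}$, on which the values of $u_h$ approach a quadratic polynomial $p(x')$ with no linear part, while $u_h\equiv1$ on the rest of $\partial\{u_h<1\}$. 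The two pointwise $C^{2,\alpha}$ hypotheses on $\partial\Omega$ and on $u|_{\partial\Omega}$ make all these convergences happen at a power rate in $h$, whereas \eqref{commentstar6} forces $\rho|x'|^2\le p(x')\le\rho^{-1}|x'|^2$, i.e.\ the limiting datum is nondegenerate --- the feature that keeps the iteration from collapsing.

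The engine is a one-step improvement lemma, proved by compactness. There are universal $\theta\in(0,1)$, $C$, $\delta_0>0$ with the following property: if $u$ lies in the normalized class above and the relevant error quantities --- $\|f-1\|_\infty$, the deviation of $\partial\Omega$ from a quadratic graph, and of $u|_{\partial\Omega}$ from a quadratic $p(x')$ --- are all $\le\delta\le\delta_0$, then there is a quadratic $\mathcal P$ with $\det D^2\mathcal P=1$, no linear part, $\|D^2\mathcal P\|\le C$ and $D^2\mathcal P$ nondegenerate, with $|u-\mathcal P|\le C\delta$ on $\{u<\theta\}$, such that the associated $\theta^{-1}$-rescaling of $u$ about $0$ again lies in the normalized class with a strictly improved error (a fixed fraction of $\delta$, plus a fixed multiple of the data oscillation at the finer scale). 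The proof is the standard contradiction argument: were it false, a sequence $u_j$ with $\delta_j\to0$ would violate it; by the compactness theorem (Theorem \ref{comp}) a subsequence converges to $w$ solving $\det D^2w=1$ in a half-ball, with $w=p_\infty(x')$ on the flat face (a nondegenerate quadratic with no linear part) and $w\equiv1$ on the cap; Theorem \ref{pe} gives $\|w\|_{C^{3,1}(B^+_{c_0})}\le c_0^{-1}$, so $w$ agrees with its second order Taylor polynomial $\mathcal Q$ at $0$ up to $O(|x|^3)$, where $\det D^2\mathcal Q=1$, $\|D^2\mathcal Q\|\le C$, and $\mathcal Q$ has no linear part (since $w\ge0=w(0)$ and $x_{n+1}=0$ is tangent to $w$ at $0$); since $3>2+\alpha$, fixing $\theta$ small makes $\mathcal Q$ an improved approximation at scale $\theta^{1/2}$, and passing to the limit $u_j\to w$ contradicts the supposed failure for $j$ large.

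Finally one iterates the lemma along $h_k=\theta^k$, starting from the base case furnished by $S_{h_0}$. Because the data oscillation at scale $h_k$ is $O(M h_k^{\alpha/2})$, the improved-error recursion yields errors decaying like $M h_k^{\alpha/2}$; composing the slidings and summing the increments of the polynomials $\mathcal P$ produces a limiting quadratic $\mathcal P_0$ with $\det D^2\mathcal P_0=f(0)$ and, after undoing the $x_{n+1}$-dilation, $\|D^2\mathcal P_0\|\le C(M)$, together with $|u-\mathcal P_0|\le C(M)|x|^{2+\alpha}$ in $\Omega\cap B_\rho$ --- the desired conclusion. The point requiring care, and the main obstacle, is that the whole scheme must close with constants independent of $k$: one verifies inductively that the tangential Hessians of the $\mathcal P_k$ stay bounded above and below (so the geometry of $\{u<h_k\}$, now governed by $\mathcal P_k$, remains comparable to a half-ball of radius $h_k^{1/2}$ and the normalizing slidings stay bounded), which in turn upgrades the a priori logarithmic bound $|A_h-I|\lesssim|\log h|$ of Theorem \ref{main_loc} to genuine boundedness --- so that no logarithmic losses accumulate and the final estimate is exactly $C^{2,\alpha}$. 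This nondegeneracy bookkeeping is anchored at every scale by \eqref{commentstar6}; the remainder is routine Schauder iteration.
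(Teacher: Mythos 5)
Your overall architecture matches the paper's: Localization Theorem to normalize, compactness against the half-domain Pogorelov model (Theorem \ref{pe}), then an iteration down dyadic scales; these correspond roughly to Lemma \ref{lem1}, Lemma \ref{lem2}, and Lemma \ref{lem3}.

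There is, however, a genuine gap in the engine. You state a one-step improvement lemma with a \emph{quantitative} rate --- $|u-\mathcal P|\le C\delta$ on $\{u<\theta\}$, the error of the rescaling dropping to a fixed fraction of $\delta$ --- and you claim to prove it by the standard contradiction/compactness argument ($u_j\to w$ with $\delta_j\to 0$, then Pogorelov on $w$). But that argument only yields the qualitative statement: for every target $\eps$ there is $\delta_0(\eps)$ so that $\delta\le\delta_0$ forces $|u-\mathcal P|\le\eps$. Compactness gives no rate in $\delta$. Without the linear-in-$\delta$ rate, the error at scale $\theta^k$ does not decay geometrically, and the increments $\|D^2\mathcal P_{k+1}-D^2\mathcal P_k\|$ are not summable; the ``nondegeneracy bookkeeping'' you correctly flag as the crux then fails, because the Hessians of $\mathcal P_k$ can drift out of the admissible range. (Relatedly, the claim that the Taylor polynomial $\mathcal Q$ of the limit $w$ has no linear part is not justified: the tangent-plane condition on $u$ does not pass to the limit, and indeed the paper carries a nontrivial $\gamma_\infty x_n$ term throughout.)

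The paper supplies exactly the missing quantitative step in Lemma \ref{lem3}. There, instead of appealing to compactness for the increment, one compares $v$ directly with the solution $w$ of the constant-right-hand-side half-domain problem with boundary data $P_m$; the bound $|v-w|\lesssim\delta_0\eps$ follows from a barrier built on the algebraic inequality $\det(A+aI)\ge\det A+a/4$, and the drift of the Taylor polynomial is controlled by the observation that $w-P_m$ solves a \emph{linear} uniformly elliptic equation (both $\det D^2w=1$ and $\det D^2P_m=1$), so Schauder gives $\|w-P_m\|_{C^{2,1}}\lesssim\|w-P_m\|_{L^\infty}\lesssim\eps$; the polynomial increment is therefore $O(\eps)=O(\eps_0 r_0^{m\alpha})$, which sums. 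Your proposal would need this barrier-plus-linearization argument in place of the pure compactness step; the compactness is appropriate only for the single initial approximation (Lemma \ref{lem2}), not for the iterated improvement.
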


From \eqref{om_ass6} and \eqref{commentstar6} we see that $p$, $q$ are homogenous of degree 2 and $$\|D^2 p\|, \|D^2 q\| \le \rho^{-1}.$$

A consequence of the proof of Theorem \ref{c2alpha} is that if $f\in C^\alpha$ near the origin, then $u \in C^{2, \alpha}$ in any cone $\mathcal{C}_\theta$ of opening $\theta < \pi /2$ around the $x_n$-axis i.e
$$\mathcal{C}_\theta:=\{x \in (\R^n)^+| \quad |x'| \le x_n \tan \theta \}.$$

\begin{cor}\label{cor1}
Assume $u$ satisfies the hypotheses of Theorem \ref{c2alpha} and $$\|f\|_{C^\alpha(\bar \Omega)} \le M.$$ Given any $\theta < \pi /2$ there exists $\delta(M, \theta)$ small, such that
$$\|u\|_{C^{2, \alpha}(\mathcal{C}_\theta \cap B_\delta)} \le C(M, \theta).$$
\end{cor}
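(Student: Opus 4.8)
The plan is to bootstrap Theorem \ref{c2alpha}, which gives a pointwise $C^{2,\alpha}$ expansion at the single boundary point $0$, into a uniform $C^{2,\alpha}$ estimate on a whole conical neighborhood $\mathcal C_\theta\cap B_\delta$. The strategy is a dichotomy for each point $x\in\mathcal C_\theta\cap B_\delta$ based on the distance $d(x):=\mathrm{dist}(x,\p\Omega)$ compared with $|x|$: either $x$ is ``deep inside'' $\Omega$ relative to its distance to the origin, in which case Caffarelli's interior estimate from \cite{C2} applies in a ball of radius comparable to $d(x)$; or $x$ is close to $\p\Omega$, in which case the nearest boundary point $\bar x\in\p\Omega$ inherits the hypotheses of Theorem \ref{c2alpha} (after a translation and a controlled linear normalization), and the pointwise boundary expansion at $\bar x$ covers $x$.

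First I would record the quantitative geometry: since $x\in\mathcal C_\theta$, the interior ball condition \eqref{om_ass6} and convexity give $d(x)\ge c(\theta)|x|$ for $x$ in a small enough cone around the $x_n$-axis — this is exactly what the cone opening $\theta<\pi/2$ buys us, and is where $\delta(M,\theta)$ enters. Then for the interior alternative, I apply the interior pointwise $C^{2,\alpha}$ estimate of \cite{C2} to $u$ in $B_{d(x)/2}(x)$: the hypothesis $\|f\|_{C^\alpha(\bar\Omega)}\le M$ and the normalization of $u$ (strict convexity / the volume bounds $|S_h|\sim h^{n/2}$ coming from $\lambda\le\det D^2u\le\Lambda$, together with the global information from Theorem \ref{c2alpha} that $D^2u$ is bounded near $0$) give that the sections of $u$ around $x$ are equivalent to balls of the right size, so Caffarelli's theorem yields $[D^2u]_{C^\alpha(B_{d(x)/4}(x))}\le C(M,\theta)$. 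For the boundary alternative, one translates so that $\bar x$ becomes the origin and subtracts the supporting plane there; the $C^{2,\alpha}$ regularity of $\p\Omega$ and of $u|_{\p\Omega}$ at $0$, restricted to a neighborhood of $\bar x$, transfers (with constants depending only on $M,\rho$) to the analogous pointwise $C^{2,\alpha}$ hypotheses at $\bar x$, and the quadratic separation \eqref{commentstar6} likewise localizes; applying Theorem \ref{c2alpha} at $\bar x$ then gives a quadratic polynomial $\mathcal P_{\bar x}$ with $|u-\mathcal P_{\bar x}|\le C(M)|y-\bar x|^{2+\alpha}$ on a fixed neighborhood, which in particular controls $u$ near $x$ since $|x-\bar x|=d(x)\le C|x|$.

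Finally I would patch these local estimates together. Having $[D^2u]_{C^\alpha}\le C(M,\theta)$ on each ball $B_{cd(x)}(x)$ with a uniform constant, a standard covering/interpolation argument (in the spirit of the remark in the introduction that a uniform pointwise $C^{2,\alpha}$ bound yields a classical $C^{2,\alpha}$ bound on Lipschitz domains) promotes this to $\|u\|_{C^{2,\alpha}(\mathcal C_\theta\cap B_\delta)}\le C(M,\theta)$; one uses that for two nearby points $x,y$ either they lie in a common such ball, or $|x-y|$ is comparable to $\max(d(x),d(y))$ and one estimates $|D^2u(x)-D^2u(y)|$ by comparing both to the boundary polynomial supplied by Theorem \ref{c2alpha}. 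The main obstacle I expect is the careful bookkeeping in the boundary alternative: verifying that translating to $\bar x$ and subtracting the appropriate affine function preserves all the structural hypotheses \eqref{om_ass6}--\eqref{commentstar6} with constants depending only on $M,\rho$ (not on $\bar x$), i.e.\ the uniformity of the localization as $\bar x$ ranges over $\p\Omega\cap B_\delta$; the pure PDE input is already packaged in Theorem \ref{c2alpha} and \cite{C2}.
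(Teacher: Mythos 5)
Your core argument is in the same spirit as the paper's, but the dichotomy you set up is spurious and one branch of it is in fact unavailable. Having observed, correctly, that $d(x)\ge c(\theta)|x|$ for all $x\in\mathcal{C}_\theta\cap B_\delta$, you should draw the conclusion: \emph{every} point of the cone falls into your ``interior'' alternative (at the natural scale $\sim|x|$), so the ``boundary'' alternative never arises and can be deleted. Worse, as written the boundary alternative cannot be carried out: Corollary~\ref{cor1} inherits only the hypotheses of Theorem~\ref{c2alpha}, which concern the single boundary point $0$ (quadratic separation and $C^{2,\alpha}$ boundary data \emph{at the origin}). Nothing in the statement gives you the analogous hypotheses at a nearby boundary point $\bar x$, so you cannot invoke Theorem~\ref{c2alpha} there with uniform constants. (You may be conflating this corollary with Theorem~\ref{global}, which assumes $\p\Omega,\,u|_{\p\Omega}\in C^{2,\alpha}$ and uniform quadratic separation globally, and for which your two-case scheme would be the right framework.)

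On the part that does matter: your route via Caffarelli's interior pointwise $C^{2,\alpha}$ estimate from \cite{C2} is morally the same as the paper's, but the paper implements it internally rather than by an external citation. After the rescaling $\tilde u$ of Lemma~\ref{lem1}--Lemma~\ref{lem2}, a cone point becomes, after one more dilation, an interior point $x_0$ with $B_1(x_0)\subset\tilde\Omega$ at which the hypotheses of Lemma~\ref{lem3} hold for some $\eps\le\eps_0$; Remark~\ref{rem7} (the interior version of the iteration, using Pogorelov's interior estimate in place of Theorem~\ref{pe}) then yields the pointwise $C^{2,\alpha}$ bound at $x_0$ uniformly. Either route needs a bit more than ``$D^2u$ is bounded near $0$'': for the patching to close when $|x-y|\sim\max(d(x),d(y))$, you need the quantitative output $|D^2u(x)-D^2\mathcal P_0|\le C|x|^\alpha$, which follows from the iteration (or a quantitative reading of \cite{C2}) once you observe that the seed deviation $|u-\mathcal P_0|\le C|y|^{2+\alpha}$ is $O(|x|^\alpha)$ in the units scaled to $B_{cd(x)}(x)$. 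With that in hand, your covering argument goes through.
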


We also mention the global version of Theorem \ref{c2alpha}.

\begin{thm}\label{global}
Let $\Omega$ be a bounded, convex domain and let $u : \overline \Omega \rightarrow \R$ be convex, Lipschitz continuous, satisfying
$$\det D^2u =f, \quad \quad 0<\lambda \leq f \leq \Lambda \quad \text{in $\Omega$}.$$
Assume that
$$\p \Omega, \quad u|_{\p \Omega} \in C^{2,\alpha}, \quad \quad f \in C^\alpha(\bar \Omega), $$ for some $\alpha \in (0,1)$ and
there exists a constant $\rho>0$ such that $$u(y)-u(x)-\nabla u(x) \cdot (y-x) \ge \rho |y-x|^2 \quad \quad \forall x,y \in \p \Omega,$$
where $\nabla u(x)$ is understood in the sense of \eqref{eq_u16}. Then $u \in C^{2,\alpha}(\bar \Omega)$ and
$$\|u\|_{C^{2,\alpha}(\bar \Omega)} \le C,$$
with $C$ depending on $\|\p \Omega\|_{C^{2,\alpha}}$, $\| u|_{\p \Omega}\|_ { C^{2,\alpha}}$, $\|u\|_{C^{0,1}(\bar \Omega)}$, $\|f\|_{C^\alpha(\bar \Omega)}$, $\rho$, $\lambda$, $\Lambda$, $n$, $\alpha$.
\end{thm}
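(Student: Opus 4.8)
The plan is to derive Theorem \ref{global} from the pointwise estimate of Theorem \ref{c2alpha} by a covering argument, treating boundary points and interior points separately and then patching. First I would reduce to the local normalization: near any boundary point $x_0 \in \p \Omega$, after subtracting the supporting plane $l_{x_0}$ (in the sense of \eqref{eq_u16}) and applying a rotation sending the inner normal to $e_n$, the hypotheses $\p\Omega, u|_{\p\Omega} \in C^{2,\alpha}$ together with the uniform quadratic separation $u(y)-l_{x_0}(y) \ge \rho|y-x_0|^2$ on $\p\Omega$ give exactly conditions \eqref{om_ass6}--\eqref{commentstar6} with constants depending only on the stated norms; here one uses that the uniform convexity lower bound $\rho|y-x|^2$ forces $\Omega$ to contain an interior ball tangent at $x_0$ of a uniform radius. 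The Lipschitz bound on $u$ and the interior ball condition make all the reductions uniform in $x_0$.

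Next I would invoke Theorem \ref{c2alpha} at every boundary point $x_0$: this produces a quadratic polynomial $\mathcal{P}_{x_0}$ with $\det D^2\mathcal{P}_{x_0}=f(x_0)$, $\|D^2\mathcal{P}_{x_0}\|\le C$, and $|u-\mathcal{P}_{x_0}|\le C|x-x_0|^{2+\alpha}$ on $\Omega\cap B_\rho(x_0)$, with $C$ and $\rho$ uniform. For interior points $x_0 \in \Omega$ at distance $d=\mathrm{dist}(x_0,\p\Omega)$, I would use that by the boundary estimate $u$ is $C^{2,\alpha}$ (with uniform bound) on the boundary layer, which in particular gives a uniform modulus of strict convexity for the sections of $u$; then Caffarelli's pointwise interior estimate from \cite{C2} applies. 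The standard way to make the interior estimate scale correctly near the boundary is to note that when $d$ is comparable to the size of the section, one can compare with the boundary polynomial $\mathcal{P}_{x_0'}$ at the nearest boundary point $x_0'$, so that the interior $C^{2,\alpha}$ estimate in the section of appropriate height, combined with $|u - \mathcal{P}_{x_0'}| \le Cd^{2+\alpha}$, yields $|u-\mathcal{P}_{x_0}| \le C|x-x_0|^{2+\alpha}$ with the \emph{same} uniform constant; for sections deep in the interior one uses the interior estimate directly together with the uniform strict convexity.

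Finally, having a quadratic polynomial $\mathcal{P}_{x_0}$ at \emph{every} point $x_0\in\bar\Omega$ with $|u(x)-\mathcal{P}_{x_0}(x)|\le M'|x-x_0|^{2+\alpha}$ for $|x-x_0|\le\delta$ and a uniform $M'$, I would apply the elementary fact stated in the introduction (pointwise $C^{2,\alpha}$ at all points of a Lipschitz domain, uniformly, implies $u\in C^{2,\alpha}(\bar\Omega)$ with $[D^2u]_{C^\alpha(\bar\Omega)}\le C(\delta,\Omega)M'$) to conclude. I expect the main obstacle to be the \textbf{uniformity of the constants across the boundary-to-interior transition}: one must check carefully that the normalization constants in reducing each boundary point to the setting of Theorem \ref{c2alpha} are controlled solely by the stated data (this is where the hypothesis $u(y)-u(x)-\nabla u(x)\cdot(y-x)\ge\rho|y-x|^2$ on $\p\Omega$ is essential, playing the role of \eqref{commentstar6}), and that the interior estimate matches up with the boundary polynomial at the right scale so that no blow-up of constants occurs as $d\to 0$. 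The Lipschitz continuity of $u$ on $\bar\Omega$ is used to bound the slopes of the supporting planes $l_{x_0}$ uniformly, which keeps the whole family of normalizations in a compact regime.
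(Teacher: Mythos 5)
Your proposal follows the same route as the paper: apply the pointwise boundary estimate (Theorem \ref{c2alpha}) uniformly at all boundary points (which, via Remark \ref{rem7} and the argument of Corollary \ref{cor1}, yields a $C^{2,\alpha}$ bound on a boundary layer $D_\delta$), then combine with Caffarelli's interior $C^{2,\alpha}$ estimate from \cite{C2} and patch via the elementary pointwise-to-classical $C^{2,\alpha}$ fact. Your discussion of the boundary-to-interior matching at the section scale is essentially the content of Remark \ref{rem7} and Corollary \ref{cor1}, and your identification of the uniform quadratic-separation hypothesis as the source of uniformity in the normalizations is exactly the role it plays in the paper.
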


In general, the Lipschitz bound is easily obtained from the boundary data $u|_{\p \Omega}$. We can always do this if for example $\Omega$ is uniformly convex. 

The proof of Theorem \ref{c2alpha} is similar to the proof of the interior $C^{2,\alpha}$ estimate from \cite{C2}, and it  has three steps. First we use the localization theorem to show that after a rescaling it suffices to prove the theorem only in the case when $M$ is arbitrarily small (see Lemma \ref{lem1}). Then we use Pogorelov estimate in half-domain (Theorem \ref{pe}) and reduce further the problem to the case when $u$ is arbitrarily close to a quadratic polynomial (see Lemma \ref{lem2}). In the last step we use a standard iteration argument to show that $u$ is well-approximated by quadratic polynomials at all scales.

We assume for simplicity that $$f(0)=1,$$ otherwise we divide $u$ by $f(0)$.

Constants depending on $\rho$, $\lambda$, $\Lambda$, $n$ and $\alpha$ are called universal. We denote them by $C$, $c$ and they may change from line to line whenever there is no possibility of confusion. Constants depending on universal constants and other parameters i.e M, $\sigma$, $\delta$, etc. are denoted as $C(M,\sigma, \delta)$.

We denote linear functions by $l(x)$ and quadratic polynomials which are homogenous and convex we denote by $p(x')$, $q(x')$, $P(x)$.

The localization theorem says that the section $S_h$ is comparable to an ellipsoid $E_h$ which is obtained from $B_{h^{1/2}}$ by a sliding along $\{x_n=0\}$. Using an affine transformation we can normalize $S_h$ so that it is comparable to $B_1$. In the next lemma we show that, if $h$ is sufficiently small, the corresponding rescaling $u_h$ satisfies the hypotheses of $u$ in which the constant $M$ is replaced by an arbitrary small constant $\sigma$.

\begin{lem}\label{lem1}
 Given any $\sigma>0$, there exist small constants $h=h_0(M,\sigma)$, $k>0$ depending only on $\rho$, $\lambda$, $\Lambda$, $n$, and a rescaling of $u$
 $$u_h(x):=\frac{u(h^{1/2}A^{-1}_hx)}{h}$$ where $A_h$ is a linear transformation with $$\det A_h=1, \quad \|A_h^{-1}\|, \,\|A_h\| \le k^{-1} |\log h|,$$ so that

 a) $$B_k \cap \bar \Omega_h \subset S_1(u_h) \subset B_{k^{-1}}^+, \quad \quad S_1(u_h):=\{u_h < 1\},$$

 b) $$\det D^2 u_h=f_h, \quad \quad |f_h(x)-1| \le \sigma |x|^\alpha \quad \mbox{in} \quad \Omega_h \cap B_{k^{-1}},$$

 c) On $\p \Omega_h \cap B_{k^{-1}}$ we have
 $$|x_n-q_h(x')| \le \sigma |x'|^{2+\alpha}, \quad \quad |q_h(x')| \le \sigma,$$
 $$|u_h-p(x')|\le \sigma |x'|^{2+\alpha},$$
where $q_h$ is a quadratic polynomial.
\end{lem}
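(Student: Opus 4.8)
The plan is to extract the map $A_h$ from the Localization Theorem and then verify (a)--(c) by direct computation. The hypotheses \eqref{om_ass6}--\eqref{commentstar6} are exactly the assumptions of Theorem \ref{main_loc} (with $\mu=\rho$), so for each $h<c(\rho)$ there is an ellipsoid $E_h=A_h^{-1}(h^{1/2}B_1)$ with $A_hx=x-\nu_hx_n$, $(\nu_h)_n=0$, $|\nu_h|\le k_0^{-1}|\log h|$, and $k_0E_h\cap\ov\Omega\subset S_h\subset k_0^{-1}E_h\cap\ov\Omega$. Since $A_h$ fixes the $x_n$-coordinate, $\det A_h=1$ and $\|A_h^{\pm1}\|\le 1+|\nu_h|\le k^{-1}|\log h|$ for a universal $k\le k_0$. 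I would set $T_h:=h^{-1/2}A_h$ and $u_h(x):=h^{-1}u(T_h^{-1}x)=h^{-1}u(h^{1/2}A_h^{-1}x)$ on $\Omega_h:=T_h\Omega$. Applying the linear map $T_h$ to the inclusions above and using $A_hE_h=h^{1/2}B_1$ gives $kB_1\cap\ov\Omega_h\subset S_1(u_h)\subset k^{-1}B_1^+$ (using that $T_h$ preserves $\{x_n\ge0\}$), which is part (a). For (b), from $v=u\circ L$ one has $\det D^2v=(\det L)^2(\det D^2u)\circ L$; taking $L=h^{1/2}A_h^{-1}$ (so $\det L=h^{n/2}$) yields $\det D^2u_h(x)=f(h^{1/2}A_h^{-1}x)=:f_h(x)$. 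For $x\in\Omega_h\cap B_{k^{-1}}$ we have $|h^{1/2}A_h^{-1}x|\le k^{-2}h^{1/2}|\log h|<\rho$ once $h$ is small, so the $C^\alpha$ bound on $f$ applies and $|f_h(x)-1|=|f(h^{1/2}A_h^{-1}x)-f(0)|\le M(k^{-1}h^{1/2}|\log h|)^\alpha|x|^\alpha$. Since $h^{1/2}|\log h|\to0$, choosing $h=h_0(M,\sigma)$ small makes the prefactor $\le\sigma$, proving (b).

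For (c), writing $y=T_hx$ we get $x_n=h^{1/2}y_n$ and, because $A_h$ slides along $\{x_n=0\}$, $x'=h^{1/2}(y'+\nu_hy_n)$; homogeneity of $p,q$ then gives $p(x')=h\,p(y'+\nu_hy_n)$ and $q(x')=h\,q(y'+\nu_hy_n)$. I would take $q_h(y'):=h^{1/2}q(y')$ and keep $p$ unchanged, so $|q_h|\le h^{1/2}\rho^{-1}k^{-2}\le\sigma$ for $h$ small. Substituting into the hypotheses of Theorem \ref{c2alpha} turns $|x_n-q(x')|\le M|x'|^{2+\alpha}$ into $|y_n-h^{1/2}q(y'+\nu_hy_n)|\le Mh^{(1+\alpha)/2}|y'+\nu_hy_n|^{2+\alpha}$ and $|u-p(x')|\le M|x'|^{2+\alpha}$ into $|u_h(y)-p(y'+\nu_hy_n)|\le Mh^{\alpha/2}|y'+\nu_hy_n|^{2+\alpha}$; it remains to replace $y'+\nu_hy_n$ by $y'$. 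Using $0\le x_n\le C|x'|^2$ on $\p\Omega\cap B_\rho$ (from $q\ge0$, $\Omega\subset\{x_n\ge0\}$ and $\|D^2q\|\le\rho^{-1}$) together with $|x'|^2=h|y'+\nu_hy_n|^2$, one obtains $y_n\le Ch^{1/2}(|y'|^2+|\nu_h|^2y_n^2)$ on $\p\Omega_h\cap B_{k^{-1}}$, hence $y_n\le Ch^{1/2}|y'|^2$ after absorbing the last term (valid since $h^{1/2}|\nu_h|^2\to0$). Then $|\nu_h|\,y_n\le Ck^{-1}h^{1/2}|\log h|\,|y'|^2\le|y'|$ for $h$ small, so $|y'+\nu_hy_n|\le2|y'|$, and
$$|p(y'+\nu_hy_n)-p(y')|+h^{1/2}|q(y'+\nu_hy_n)-q(y')|\le C\big(|\nu_h|\,y_n\,|y'|+|\nu_h|^2y_n^2\big)\le Ck^{-1}k^{-(1-\alpha)}h^{1/2}|\log h|\,|y'|^{2+\alpha},$$
where $|y'|\le k^{-1}$ is used to trade $|y'|^{1-\alpha}$ for $k^{-(1-\alpha)}$. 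Since $h^{1/2}|\log h|\to0$, all the error terms are $\le\sigma|y'|^{2+\alpha}$ once $h=h_0(M,\sigma)$ is small enough, which gives (c).

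The main obstacle is part (c): one must keep careful track of the cross-terms produced by the sliding $A_h$, whose norm $\sim|\log h|$ blows up as $h\to0$. The point is that each such term also carries a positive power of $h$ coming from the parabolic scaling $x_n\sim|x'|^2$ of the boundary, which beats the logarithm, and that the quadratic growth $|y'|^2$ can be upgraded to the required $|y'|^{2+\alpha}$ at the cost of the harmless bounded factor $k^{-(1-\alpha)}$ since all the estimates take place inside $B_{k^{-1}}$. Parts (a) and (b) are essentially immediate once the Localization Theorem is invoked.
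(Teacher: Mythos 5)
Your proposal is correct and follows essentially the same route as the paper: both invoke Theorem \ref{main_loc} to produce $A_h$ and then verify (a)--(c) by direct computation, using the parabolic scaling of the boundary ($x_n\lesssim|x'|^2$ near $0$) to show that the cross-terms generated by the sliding map, which carry factors of $|\nu_h|\lesssim|\log h|$, are dominated by positive powers of $h$. The only cosmetic differences are that you change variables in the opposite direction ($y=T_hx$ rather than $y=h^{1/2}A_h^{-1}x$) and derive the key bound $y_n\lesssim h^{1/2}|y'|^2$ directly on $\p\Omega_h$, whereas the paper first shows $|y|\le h^{1/4}$ and uses the interior tangent ball to get $|y_n|\le\rho^{-1}|y'|^2$ in the original coordinates; both yield the same estimate $|y'+\nu_hy_n|\sim|y'|$ that makes the replacement work.
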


\begin{proof}
 By the localization theorem Theorem \ref{main_loc}, for all $h \le c$, $$S_h:=\{u<h\} \cap \bar \Omega,$$ satisfies $$k E_h \cap \bar \Omega  \subset S_h \subset k^{-1} E_h,$$ with
$$E_h=A^{-1}_hB_{h^{1/2}}, \quad \quad A_hx=x-\nu_hx_n$$
$$\nu_h \cdot e_n=0, \quad \|A_h^{-1}\|, \,\|A_h\| \le k^{-1} |\log h|.$$
Then we define $u_h$ as above and obtain $$S_1(u_h)=h^{-1/2}A_hS_h,$$ hence $$B_k \cap \bar \Omega_h \subset S_1(u_h) \subset B_{k^{-1}}^+,$$
where $$\Omega_h:= h^{-1/2}A_h \Omega.$$

Then  $$\det D^2 u_h=f_h(x)=f(h^{1/2}A_h^{-1}x),$$ and
\begin{align*}|f_h(x)-1| &\le M|h^{1/2} A_h^{-1} x|^\alpha \\
& \le M (h^{1/2}k^{-1}|\log h|)^\alpha |x|^\alpha \\
& \le \sigma |x|^\alpha
\end{align*}
if $h_0(M,\sigma)$ is sufficiently small.

Next we estimate $|x_n-h^{1/2}q(x')|$ and $|u_h-p(x')|$ on $\p \Omega_h \cap B_{k^{-1}}$. We have $$x \in \p \Omega_h \quad \Leftrightarrow \quad y:=h^{1/2} A_h^{-1} x \in \Omega,$$ or $$h^{1/2}x_n=y_n, \quad \quad h^{1/2} x'=y'-\nu_h y_n.$$
If $|x| \le k^{-1}$ then $$|y| \le k^{-1} h^{1/2} |\log h| |x| \le h^{1/4},$$ if $h_0$ is small hence, since $\Omega$ has an interior tangent ball of radius $\rho$, we have $$|y_n| \le \rho^{-1} |y'|^2.$$ Then $$|\nu_h y_n| \le k^{-1}|\log h| |y'|^2 \le |y'|/2,$$ thus $$\frac 12 |y'| \le |h^{1/2}x'|\le \frac 32 |y'|.$$ We obtain
\begin{align*}
|x_n-h^{1/2}q(x')| &\le h^{-1/2}|y_n-q(y')|+ h^{1/2}|q(h^{-1/2}y')-q(x')|\\
& \le M h^{-1/2}|y'|^{2+\alpha} + C h^{1/2}\left(|x'||\nu_h x_n|+|\nu_h x_n|^2\right) \\
& \le 2M h^{(\alpha+1)/2}|x'|^{2+\alpha} + C h^{1/2} \left (h^{1/2} |\log h||x'|^3+ h |\log h|^2|x'|^4 \right )\\
&\le \sigma |x'|^{2+\alpha},
\end{align*}
if $h_0$ is chosen small. Hence on $\p \Omega_h \cap B_{k^{-1}}$,
$$|x_n-q_h(x')| \le \sigma |x'|^{2+\alpha}, \quad \quad q_h:=h^{1/2}q(x'),$$
$$|q_h| \le \sigma,$$
and also
\begin{align*}
|u_h-p(x')| &\le h^{-1}|u(y)-p(y')|+|p(h^{-1/2}y')-p(x')|\\
&\le Mh^{-1}|y'|^{2+\alpha} + C \left(|x'||\nu_h x_n|+|\nu_h x_n|^2\right)\\
&\le 2M h^{\alpha/2}|x'|^{2+\alpha} + C \left (h^{1/2} |\log h||x'|^3+ h |\log h|^2|x'|^4 \right )\\
& \le \sigma |x'|^{2+\alpha}.
\end{align*}

\end{proof}

In the next lemma we show that if $\sigma$ is sufficiently small, then $u_h$ can be well-approximated by a quadratic polynomial near the origin.

\begin{lem}\label{lem2}
For any $\delta_0$, $\eps_0$ there exist $\sigma_0(\delta_0, \eps_0)$, $\mu_0(\eps_0)$ such that for any function $u_h$ satisfying properties a), b), c) of Lemma \ref{lem1} with $\sigma \le \sigma_0$ we can find
a rescaling $$\tilde u(x):=\frac{(u_h-l_h)(\mu_0 x)}{\mu_0^2},$$ with $$l_h(x)=\gamma_h x_n, \quad |\gamma_h| \le C_0,\quad \quad \mbox{$C_0$ universal},$$
that satisfies

a) in $\tilde \Omega \cap B_1,$

$$\det D^2 \tilde u=\tilde f, \quad \quad |\tilde f(x)-1| \le \delta_0 \eps_0 |x|^\alpha \quad \mbox{in} \quad \tilde \Omega \cap B_1,$$
and
$$|\tilde u-P_0|\le \eps_0 \quad \mbox{in} \quad \tilde \Omega \cap B_1,$$
for some $P_0$, quadratic polynomial,
$$\det D^2 P_0=1, \quad \|D^2 P_0\|\le C_0;$$

 b) On $\p \tilde \Omega \cap B_1$ there exist $\tilde p_0$, $\tilde q_0$ such that

 $$|x_n-\tilde q_0(x')| \le \delta_0 \eps_0 |x'|^{2+\alpha}, \quad \quad |\tilde q_0(x')| \le \delta_0 \eps_0,$$

and

 $$|\tilde u-\tilde p_0(x')|\le \delta_0 \eps_0 |x'|^{2+\alpha}, $$
$$\tilde p_0(x') =P_0(x'), \quad  \quad \frac \rho 2 |x'|^2 \le \tilde p_0(x') \le 2\rho |x'|^2.  $$
\end{lem}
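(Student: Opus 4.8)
The plan is to argue by compactness, sending $\sigma \to 0$ and reducing to a model solution to which the half-domain Pogorelov estimate (Theorem~\ref{pe}) applies. First I would fix $\mu_0 = \mu_0(\eps_0)$ using only that estimate. Any convex $w$ solving $\det D^2 w = 1$ on a domain $\Omega_0$ with $B_k^+ \subset \Omega_0 \subset B_{k^{-1}}^+$, with boundary values $w = p(x')$ on $\{p(x')\le 1\}\cap\{x_n=0\}\subset\p\Omega_0$ and $w=1$ on the rest of $\p\Omega_0$, where $\rho|x'|^2 \le p(x') \le \rho^{-1}|x'|^2$, satisfies $\|w\|_{C^{3,1}(B_{c_0}^+)} \le c_0^{-1}$ with $c_0$ universal. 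Since $w\ge 0$, $w(0)=0$ and $w|_{\{x_n=0\}} = p$ is homogeneous of degree $2$, the gradient of $w$ at $0$ equals $\gamma_0 e_n$ with $0 \le \gamma_0 \le c_0^{-1}=:C_0$; putting $l_0 := \gamma_0 x_n$ and $P_0 := \frac12 x^T D^2 w(0)\,x$, one gets $\det D^2 P_0 = 1$, $\|D^2 P_0\| \le C_0$ and $P_0(x',0) = p(x')$. Taylor's theorem gives $|(w-l_0)(y) - P_0(y)| \le c_0^{-1}|y|^3$ for $|y|\le c_0$, so the rescaling $(w-l_0)(\mu_0 x)/\mu_0^2$ lies within $c_0^{-1}\mu_0$ of $P_0$ on $B_1$; I would choose $\mu_0 = \mu_0(\eps_0)\le c_0$ with $c_0^{-1}\mu_0 \le \eps_0/4$.

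Having fixed $\mu_0$, I would note that with $l_h := \gamma_h x_n$ and $\tilde u(x):=(u_h-l_h)(\mu_0 x)/\mu_0^2$, all of part~b) and all of part~a) except the closeness $|\tilde u - P_0| \le \eps_0$ follow from the hypotheses of Lemma~\ref{lem1} by direct substitution once $\sigma \le \sigma_0(\delta_0,\eps_0)$: indeed $\tilde f(x) = f_h(\mu_0 x)$, hence $|\tilde f - 1| \le \sigma\mu_0^\alpha|x|^\alpha$; and using that $p$, $q_h$ are homogeneous of degree $2$ with $|q_h|\le\sigma$, one checks that $\p\tilde\Omega\cap B_1$ is the graph $x_n = \tilde q_0(x') + O(\sigma\mu_0^{1+\alpha}|x'|^{2+\alpha})$ with $\tilde q_0 := \mu_0 q_h$, $|\tilde q_0|\le\mu_0\sigma$, while on it $\tilde u = \tilde p_0(x') + O(\sigma\mu_0^\alpha|x'|^{2+\alpha})$ with $\tilde p_0 := p - \gamma_h q_h$, which retains the quadratic bounds of $p$ up to a factor $2$; one then defines $P_0$ to be the quadratic with $P_0(x',0)=\tilde p_0$ and $\det D^2 P_0 = 1$. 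Thus the only point that is not automatic is $|\tilde u - P_0| \le \eps_0$ in $\tilde\Omega\cap B_1$.

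For this I would argue by contradiction. If no $\sigma_0$ works, pick $u_{h_j}$ satisfying a), b), c) of Lemma~\ref{lem1} with $\sigma_j \to 0$ but admitting no good rescaling. By Theorem~\ref{comp} (compactness of solutions, allowing boundary discontinuities), after passing to a subsequence the restrictions of $u_{h_j}$ to their sections $\{u_{h_j}<1\}$ converge to a limit $w$ on a domain $\Omega_0$; by b), $\det D^2 w = 1$; since $q_{h_j}\to 0$ and $|x_n - q_{h_j}(x')| \le \sigma_j|x'|^{2+\alpha}$ on the boundary portion lying in the section, $\Omega_0$ acquires a flat boundary facet in $\{x_n=0\}$ containing $B_k^+\cap\{x_n=0\}$, on which $w = p(x')$ by c), while $w = 1$ on the remaining part of $\p\Omega_0$ and $B_k^+ \subset \Omega_0 \subset B_{k^{-1}}^+$. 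Hence $w$ is a model solution as above; taking $\gamma_{h_j} := \gamma_0$ (the slope of $w$ at $0$, which is $\le C_0$), the rescalings $\tilde u_j$ converge uniformly on $B_1$ to $(w-l_0)(\mu_0 x)/\mu_0^2$, which is $\eps_0/4$-close to $P_0$; so for $j$ large $\tilde u_j$ is within $\eps_0$ of $P_0$ and satisfies all of a), b), c), contradicting the choice of the $u_{h_j}$.

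The hard part will be the convergence step: one must be sure the varying domains $\Omega_{h_j}$ flatten at the origin to a half-space-type domain carrying the quadratic boundary data $p(x')$ on a genuine flat facet, so that Theorem~\ref{pe} applies and yields real $C^{3,1}$ bounds (these are uniform, since $k$ and $\rho$ are fixed), and that the lower bound $\rho|x'|^2 \le p$ survives so the limiting equation is uniformly elliptic near $0$. That the quadratic boundary data persists in the limit is exactly what the formalism of Section~2 (Definition~\ref{bv2}, Theorem~\ref{comp}) was set up to guarantee; the rest is bookkeeping with the localization map $A_h$ and the dilation $\mu_0$.
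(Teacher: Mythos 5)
Your proposal is correct and follows essentially the same compactness argument as the paper: pass to a limit $u_\infty$ with flat boundary data $p_\infty$ on $\{x_n=0\}$ via Theorem~\ref{comp}/Proposition~\ref{comp0}, apply the half-domain Pogorelov estimate (Theorem~\ref{pe}) to extract a cubic-order Taylor expansion $l_\infty + P_\infty$ at $0$, fix $\mu_0$ from the universal $c_0$ in that estimate, and read off the conclusion for $u_m$ with $m$ large. The only (harmless) organizational difference is that you fix $\mu_0$ up front from the uniform Pogorelov bound and verify parts (a) and (b) by direct substitution before the compactness step, whereas the paper carries out the full verification inside the contradiction argument; the content is the same.
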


\begin{proof}
 We prove the lemma by compactness. Assume by contradiction that the statement is false for a sequence $u_m$ satisfying a), b), c) of Lemma \ref{lem1} with $\sigma_m \to 0$. Then, we may assume after passing to a subsequence if necessary that
 $$p_m \to p_\infty, \quad q_m \to 0 \quad \mbox{uniformly on} \quad B_{k^{-1}},$$
 and $$u_m: S_1(u_m) \to \R$$ converges to (see Definition \ref{conv_def})$$u_\infty: \Omega_\infty \to \R. $$
 Then, by Theorem \ref{comp0}, $u_\infty$ satisfies
 $$ B_k^+ \subset \Omega_\infty \subset B_{k^{-1}}^+,  \quad \quad \det D^2u_\infty=1, $$

 $$\left \{
\begin{array}{l}
u_\infty=p_\infty(x') \quad \mbox{on} \quad \{p_\infty(x')<1\} \cap \{x_n=0\} \subset \p \Omega_\infty\\
u_\infty=1 \quad \quad \mbox{on the rest of} \quad \p \Omega_\infty.
\end{array}
\right.
$$

From Pogorelov estimate in half-domain (Theorem \ref{pe}) there exists $c_0$ universal such that
$$|u_\infty - l_\infty -P_\infty|\le c_0^{-1}|x|^3 \quad \mbox{in} \quad \mbox B_{c_0}^+,$$
where $$l_\infty:=\gamma_\infty x_n, \quad |\gamma_\infty|\le c_0^{-1},$$
and $P_\infty$ is a quadratic polynomial such that $$P_\infty(x')=p_\infty(x'), \quad \det D^2 P_\infty=1, \quad \|D^2 P_\infty\| \le c_0^{-1} .$$

Choose $\mu_0$ small such that $$c_0^{-1} \mu_0=\eps_0/32,$$ hence
$$|u_\infty-l_\infty-P_\infty| \le \frac 14 \eps_0 \mu_0^2 \quad \mbox{in} \quad B_{2 \mu_0}^+,$$
which together with $p_m \to p_\infty$ implies that for all large $m$
$$|u_m-l_\infty-P_\infty| \le \frac 12 \eps_0 \mu_0^2 \quad \mbox{in} \quad S_1(u_m) \cap B_{\mu_0}^+.$$

Then, for all large $m$, $$\tilde u_m:=\frac{(u_m-l_\infty)(\mu_0x)}{\mu_0^2}$$ satisfies in $\tilde \Omega_m \cap B_1$
$$|\tilde u_m-P_\infty| \le \eps_0 /2,$$
and
$$\det D^2 \tilde u_m =\tilde f_m(x)=f_m(\mu_0x),$$
$$|\tilde f_m(x)-1| \le \sigma_m (\mu_0|x|)^\alpha \le \delta_0 \eps_0 |x|^\alpha.$$
We define
$$\tilde q_m:=\mu_0 q_m, \quad \quad \tilde p_m:=p_m-\gamma_\infty q_m,$$
and clearly $$\tilde p_m \to p_\infty, \quad \tilde q_m \to 0 \quad \mbox{uniformly in} \quad B_1.$$

On $\p \tilde \Omega_m \cap B_1$ we have
\begin{align*}
|x_n-\tilde q_m(x')|&=\mu_0^{-1}|\mu_0x_n-q_m(\mu_0x')|\\
& \le \mu_0^{-1}\sigma_m |\mu_0x'|^{2+\alpha} \\
&\le \delta_0 \eps_0 |x'|^{2+\alpha},
\end{align*}
and
\begin{align*}
|\tilde u_m-\tilde p_m(x')| &= \mu_0^{-2}|(u_m-l_\infty)(\mu_0x)-p_m(\mu_0x')+\gamma_\infty q_m(\mu_0x')|\\
&\le \mu_0^{-2}(|(u_m-p_m)(\mu_0x')| +|\gamma_\infty||\mu_0x_n-q_m(\mu_0x')|)\\
&\le \sigma_m\mu_0^\alpha(1+|\gamma_\infty|)|x'|^{2+\alpha}\\
&\le \delta_0 \eps_0|x'|^{2+\alpha}.
\end{align*}

Finally, we let $P_m$ be a perturbation of $P_\infty$ such that $$P_m(x')=\tilde p_m(x'), \quad  \det D^2P_m=1, \quad P_m \to P_\infty \quad \mbox{uniformly in $B_1$.}$$

Then $\tilde u_m$, $P_m$, $\tilde p_m$, $\tilde q_m$ satisfy the conclusion of the lemma for all large $m$, and we reached a contradiction.
\end{proof}

From Lemma \ref{lem1} and Lemma \ref{lem2} we see that given any $\delta_0$, $\eps_0$ there exist a linear transformation $$T:=\mu_0 h_0^{1/2} A^{-1}_{h_0}$$ and a linear function $$l(x):=\gamma x_n$$ with $$|\gamma|, \|T^{-1}\|,\|T\| \le C(M,\delta_0, \eps_0),$$ such that the rescaling $$\tilde u(x):= \frac{(u-l)(Tx)}{(\det T)^{2/n}},$$ defined in $\tilde \Omega \subset \R^n$ satisfies

1) in $\tilde \Omega \cap B_1$ $$\det D^2 \tilde u= \tilde f, \quad |\tilde f-1|\le \delta_0 \eps_0 |x|^ \alpha,$$ and $$|\tilde u-P_0| \le \eps_0,$$ for some $P_0$ with $$\det D^2 P_0 =1, \quad \|D^2 P_0\| \le C_0;$$

2) on $\p \tilde \Omega \cap B_1$ we have $\tilde p$, $\tilde q$ so that

 $$|x_n-\tilde q(x')| \le \delta_0 \eps_0 |x'|^{2+\alpha}, \quad \quad |\tilde q(x')| \le \delta_0 \eps_0,$$

 $$|\tilde u-\tilde p(x')|\le \delta_0 \eps_0 |x'|^{2+\alpha}, \quad \quad \frac \rho 2 |x'|^2 \le \tilde p(x')=P_0(x') \le 2\rho |x'|^2.$$

 By choosing $\delta_0$, $\eps_0$ appropriately small, universal, we show in Lemma \ref{lem3} that there exist $\tilde l$, $\tilde P$ such that $$|\tilde u - \tilde l -\tilde P|\le C|x|^{2+\alpha}   \quad \mbox{in} \quad \tilde \Omega \cap B_1, \quad \quad \mbox{and} \quad |\nabla \tilde l|,\, \|D^2 \tilde P\| \le C,$$ with $C$ a universal constant. Rescaling back, we obtain that $u$ is well approximated by a quadratic polynomial at the origin i.e
 $$|u-l-P|\le C(M)|x|^{2+\alpha} \quad \mbox{in} \quad \Omega \cap B_\rho,\quad \quad \mbox{and} \quad |\nabla l|,\, \|D^2 P\| \le C(M)$$
 which, by \eqref{eq_u16}, proves Theorem \ref{c2alpha}.

  Since $\alpha \in (0,1)$, in order to prove that $\tilde u \in C^{2, \alpha}(0)$ it suffices to show that $\tilde u$ is approximated of order $2+\alpha$ by quadratic polynomials $l_m+P_m $ in each ball of radius $r_0^m$ for some small $r_0>0$, and then $\tilde l+\tilde P$ is obtained in the limit as $m \to \infty$ (see \cite{C2}, \cite{CC}). Thus Theorem \ref{c2alpha} follows from the next lemma.

\begin{lem}\label{lem3}
Assume $\tilde u$ satisfies the properties 1), 2) above. There exist $\eps_0$, $\delta_0$, $r_0$ small, universal, such that for all $m \ge 0$ we can find $l_m$, $P_m$ so that
  $$|\tilde u -l_m-P_m|\le \eps_0 r^{2+\alpha} \quad \mbox{in} \quad \tilde \Omega \cap B_r, \quad \quad \mbox{with} \quad r=r_0^m.$$
\end{lem}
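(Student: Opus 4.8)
The plan is to set this up as a standard induction on $m$, where at each stage we improve the quadratic approximation on a ball shrunk by the fixed factor $r_0$. For $m=0$ we simply take $l_0 \equiv 0$ and $P_0$ from property 1); the bound $|\tilde u - P_0| \le \eps_0$ on $\tilde \Omega \cap B_1$ is exactly the hypothesis. Assume now the statement holds at step $m$ with polynomials $l_m, P_m$; we must produce $l_{m+1}, P_{m+1}$ valid on $\tilde \Omega \cap B_r$ with $r=r_0^{m+1}$. The idea is to rescale: set $w(x) := (\tilde u - l_m - P_m)(r_0^m x)\, r_0^{-m(2+\alpha)} \eps_0^{-1}$ — or rather, since $P_m$ should be rescaled as a quadratic and kept, one works with $v(x) = (\tilde u - l_m)(r_0^m x) r_0^{-2m}$, which again solves a Monge--Ampère equation $\det D^2 v = \tilde f(r_0^m x) =: f_m(x)$ on the rescaled domain, with $f_m$ still $C^\alpha$-close to $1$ (the $C^\alpha$ seminorm only improves under this rescaling since $2 > \alpha$ is irrelevant here — the point is $r_0^{m\alpha}$ decay), and with $|v - P_m| \le \eps_0 r_0^{m\alpha}$ by the inductive hypothesis. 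The rescaled domain still satisfies the localization-type normalization and the boundary data is quadratically close to $P_m(x')$ with errors controlled by $\delta_0 \eps_0 r_0^{m\alpha}$.

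The engine is Lemma \ref{lem2} applied to $v$ (or more precisely, a Pogorelov/compactness step of the same flavor): because $v$ is $\eps_0 r_0^{m\alpha}$-close to the quadratic polynomial $P_m$ whose Hessian is universally bounded and has determinant close to $1$, and because the Monge--Ampère equation is then uniformly elliptic near the origin, we get via Theorem \ref{pe} (Pogorelov in the half-domain) a $C^{3,1}$ estimate, hence $v$ is approximated to third order by $\ell + Q$ with $\ell$ linear, $Q$ quadratic, $\det D^2 Q = 1$ (after adjusting), $|\nabla \ell|, \|D^2 Q\| \le C_0$, and the error $\le C_0 |x|^3$ on $B_{c_0}^+$. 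Evaluating at scale $r_0$ and choosing $r_0$ small enough that $C_0 r_0^3 \le \tfrac12 r_0^{2+\alpha}$ (possible since $3 > 2+\alpha$), then absorbing the right-hand-side error $\delta_0 \eps_0$ and the boundary error into the remaining half via $\delta_0$ small and the comparison principle / Alexandrov estimate, we obtain the improved bound $\le \eps_0 r_0^{2+\alpha}$ at the next scale. Rescaling back and setting $P_{m+1} := P_m + r_0^{2m} Q(\,\cdot\, / r_0^m)$, $l_{m+1} := l_m + r_0^{2m}\ell(\,\cdot / r_0^m)$ closes the induction. One checks along the way that the Hessians $\|D^2 P_m\|$ stay uniformly bounded (the increments form a geometric series in $r_0^{m\alpha}$) and that $\det D^2 P_m \to 1$, which is what is needed to keep the ellipticity uniform at every stage.

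The main obstacle is the uniformity of the Pogorelov step as $m\to\infty$: one must ensure that the constant $C_0$ produced by Theorem \ref{pe} does not degrade along the iteration. This is guaranteed precisely because the rescaled solutions $v$ at every stage are $o(1)$-close (in fact $\eps_0 r_0^{m\alpha}$-close) to a quadratic polynomial lying in a fixed compact family ($\det = 1$, Hessian bounded by $C_0$, bounded below by $\rho/2$ on the tangential variables), so the normalized half-domain problems all belong to one compact class to which Theorem \ref{pe} applies with a single constant. A secondary technical point is handling the domain: the rescaled domains $r_0^{-m}\tilde\Omega$ do not converge to the half-space but only contain a half-ball and are contained in a larger half-ball, with the flat boundary piece $C^{2,\alpha}$-close to $\{x_n = \tilde q_m(x')\}$; the quadratic error $\delta_0\eps_0 r_0^{m\alpha}$ in the boundary is what makes the limiting problem in Lemma \ref{lem2} have a genuinely flat boundary, so the same compactness argument goes through at every scale. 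Once these uniformity issues are settled, the choice of $\eps_0, \delta_0, r_0$ is the routine balancing already indicated: $r_0$ so that $C_0 r_0^{1-\alpha} \le \tfrac14$, then $\delta_0$ so that the equation and boundary errors contribute at most $\tfrac14 \eps_0 r_0^{2+\alpha}$, then $\eps_0$ so small that Lemma \ref{lem2} is applicable with these $\delta_0, \eps_0$.
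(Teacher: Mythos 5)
Your overall skeleton (induction with a fixed ratio $r_0$, rescale $v(x)=(\tilde u-l_m)(r^m_0 x)/r_0^{2m}$, produce a third-order correction, reabsorb into $P_{m+1},l_{m+1}$, check the geometric decay of the increments) matches the paper. The gap is in the middle: you propose to get the cubic expansion by applying Theorem \ref{pe} (Pogorelov in the half-domain) \emph{directly to $v$}, ``because the Monge--Amp\`ere equation is then uniformly elliptic near the origin.'' But $v$ does not satisfy the hypotheses of Theorem \ref{pe}: its boundary is not $\{x_n=0\}$ (only $\delta_0\eps$-close to a quadratic graph), its boundary values are only $\delta_0\eps$-close to $P_m(x')$, and $\det D^2 v$ is only $\delta_0\eps$-close to $1$. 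Uniform ellipticity by itself does not give the boundary $C^{3,1}$ bound. The paper handles this by introducing an auxiliary function $w$ solving $\det D^2 w=1$ on the genuinely flat half-ball $B_{1/8}^+$ with boundary data $v$ on $\p B_{1/8}^+\cap\Omega_v$ and $P_m$ on the rest; Pogorelov applies to $w$, not to $v$. The entire technical core of the step is then the estimate $|v-w|\le C_2\delta_0\eps$, which requires (i) the radially symmetric barrier $\phi$ chosen so that $\mathrm{Tr}\,A\,D^2\phi\le -\eta_0<0$ uniformly over the relevant matrix class, translated to points $y$ with $y_n=-1/4$ to control $w$ on $B_{1/8}^+\setminus\Omega_v$, and (ii) the determinant perturbation inequality $\det(A+aI)\ge\det A + a/4$ to compare $v$ and $w$ after adding a small quadratic. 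None of this appears in your sketch; ``absorb the error via the comparison principle / Alexandrov'' does not by itself produce the needed bound because you must first move $v$ onto a flat, constant-RHS problem.

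A second, related issue: you suggest that Lemma \ref{lem2} (a compactness argument) can be ``applied at every scale.'' Compactness arguments give a fixed smallness at one scale, with a rescaling factor $\mu_0$ tied to the chosen $\eps_0$; they do not give the quantitative improvement rate $r_0^{\alpha}$ per step needed to close the induction. That rate must come from the explicit quadratic approximation of $w$ (boundary Schauder for $w-P_m$, which vanishes on $\{x_n=0\}$, using the $C^{1,1}$ control of the coefficients from Pogorelov), together with the quantitative estimate for $|v-w|$. Once those two estimates are in place, your choice of $r_0$ (so that the $|x|^3$ term beats $r_0^{2+\alpha}$), then $\delta_0$, then $\eps_0$ is exactly right.
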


\begin{proof}
We prove by induction on $m$ that the inequality above is satisfied with $$l_m=\gamma_m x_n, \quad |\gamma_m|\le 1,$$
  $$P_m(x')=\tilde p(x') -\gamma_m \tilde q(x'), \quad \det D^2 P_m=1, \quad \quad \|D^2 P_m\| \le 2 C_0.$$
  From properties 1),2) above we see that this holds for $m=0$ with $\gamma_0=0$.

Assume the conclusion holds for $m$ and we prove it for $m+1$. Let $$v(x):=\frac{(\tilde u-l_m)(rx)}{r^2}, \quad \mbox{with} \quad r:=r_0^m,$$ and define $$\eps:=\eps_0 r^\alpha.$$ Then
\begin{equation}\label{001}
|v-P_m|\le \eps \quad \mbox{in} \quad \Omega_v \cap B_1, \quad \Omega_v:=r^{-1}\tilde \Omega,
\end{equation}
$$|\det D^2 v -1| = |\tilde f(rx)-1| \le \delta_0 \eps.$$
On $\p \Omega_v \cap B_1$ we have
\begin{align*}
\left |\frac{x_n}{r}-\tilde q(x')\right|&=r^{-2}|rx_n-\tilde q(rx')|\\
&\le \delta_0 \eps |x'|^{2+\alpha},\\
& \le \delta_0 \eps,
\end{align*}
which also gives
\begin{equation}\label{001.5}
|x_n| \le 2 \delta_0 \eps \quad \mbox{on} \quad \p \Omega_v \cap B_1.
\end{equation}
From the definition of $v$ and the properties of $P_m$ we see that in $B_1$
$$|v-P_m|  \le r^{-2}|(\tilde u-\tilde p)(rx)|+|\gamma_m||x_n/r-\tilde q|+2nC_0|x_n|,$$
and the inequalities above and property 2) imply
\begin{equation}\label{002}
|v-P_m|\le C_1 \delta_0\eps \quad \mbox{in} \quad \p \Omega_v \cap B_1,
\end{equation}
with $C_1$ universal constant (depending only on $n$ and $C_0$).

We want to compare $v$ with the solution $$w:B_{1/8}^+ \to \R, \quad \quad \det D^2w=1,$$ which has the boundary conditions

$$\left \{
\begin{array}{l}
w=v \quad \mbox{on} \quad \p B_{1/8}^+ \cap \Omega_v\\
w=P_m \quad \quad \mbox{on} \quad \p B_{1/8}^+ \setminus \Omega_v.
\end{array}
\right.
$$

In order to estimate $|u-w|$ we introduce a barrier $\phi$ defined as
$$\phi:\bar B_{1/2} \setminus B_{1/4} \to \R, \quad \quad \phi(x):=c(\beta)\left(4^\beta-|x|^{-\beta} \right),$$ where $c(\beta)$ is chosen such that $\phi=1$ on $\p B_{1/2}$ and $\phi=0$ on $\p B_{1/4}$.

We choose the exponent $\beta>0$ depending only on $C_0$ and $n$ such that for any symmetric matrix $A$ with
$$(2C_0)^{1-n} I \le A \le (2C_0)^{n-1}I,$$
we have
$$Tr \,A(D^2\phi) \le -\eta_0 <0,$$
for some $\eta_0$ small, depending only on $C_0$ and $n$.

For each $y$ with $y_n=-1/4$, $|y'|\le 1/8$ the function
$$\phi_y(x):=P_m+\eps(C_1 \delta +\phi(x-y))$$
satisfies
$$\det D^2 \phi_y \le 1-\frac {\eta_0} {2} \eps  \quad \quad \mbox{in} \quad B_{1/2}(y)\setminus B_{1/4}(y),$$
if $\eps \le \eps_0$ is sufficiently small. From \eqref{001}, \eqref{002} we see that
$$v \le \phi_y \quad \mbox{on} \quad \p (\Omega_v \cap B_{1/2}(y)),$$
and if $\delta_0 \le \eta_0/2$,
$$\det D^2 v \ge \det D^2 \phi_y.$$
This gives
$$v \le \phi_y \quad \mbox{in} \quad \Omega_v \cap B_{1/2}(y),$$
and using the definition of $w$ we obtain
$$w \le \phi_y \quad \mbox{on} \quad \p B_{1/8}^+.$$
The maximum principle yields
$$w \le \phi_y \quad \mbox{in} \quad B_{1/8}^+,$$
and by varying $y$ we obtain

$$w(x) \le P_m +\eps(C_1 \delta_0+Cx_n) \quad \mbox{in} \quad B^+_{1/8}.$$
Recalling \eqref{001.5}, this implies $$w-P_m \le 2C_1 \delta_0 \eps \quad \mbox{on} \quad  B^+_{1/8}\setminus \Omega_v.$$
The opposite inequality holds similarly, hence
\begin{equation}\label{002.5}
|w-P_m|\le 2C_1 \delta_0 \eps \quad \mbox{on} \quad  B^+_{1/8}\setminus \Omega_v.
\end{equation}
From the definition of $w$ and \eqref{002} we also obtain
\begin{equation}\label{003}
|v-w| \le 3C_1 \delta_0 \eps \quad \mbox{on} \quad \p(\Omega_v \cap B^+_{1/8}).
\end{equation}

Now we claim that
\begin{equation}\label{004}
|v-w| \le C_2 \delta_0 \eps \quad \mbox{in} \quad \Omega_v \cap B^+_{1/8}, \quad \mbox{$C_2$ universal}.
\end{equation}
For this, we use the following inequality. If $A \ge 0$ is a symmetric matrix with $$1/2 \le \det A \le 2,$$ and $a \ge 0,$ then
\begin{align*}
\det(A+aI)&=\det A \det (I + a A^{-1})\\
& \ge \det A (1 + Tr (aA^{-1})) \\
& \ge \det A (1+ a/2)\\
&\ge \det A + a/4.
\end{align*}
This and \eqref{003} give that in $\Omega_v \cap B_{1/8}^+$
$$ w + 2 \delta_0 \eps (|x|^2- 2 C_1) \le v, $$
$$ v+ 2 \delta_0 \eps (|x|^2- 2 C_1) \le w,$$
and the claim \eqref{004} is proved.

Next we approximate $w$ by a quadratic polynomial near $0$. From \eqref{001},\eqref{002.5}, \eqref{004} we can conclude that
$$|w-P_m| \le 2 \eps \quad \mbox{in} \quad B_{1/8}^+,$$
if $\delta_0$ is sufficiently small. Since $w=P_m$ on $\{x_n=0\}$,  and
$$\frac \rho 4 |x'|^2 \le P_m(x') \le 4 \rho |x'|^2, \quad \det D^2 P_m=1, \quad \|D^2 P_m \| \le 2 C_0,$$
we conclude from Pogorelov estimate (Theorem \ref{pe}) that
$$\|D^2 w\|_{C^{1,1}(B_{c_0}^+)} \le c_0^{-1},$$
for some small universal constant $c_0$. Thus in $B_{c_0}^+$, $w-P_m$ solves a uniformly elliptic equation
$$Tr \, A(x)D^2(w-P_m)=0,$$ with the $C^{1,1}$ norm of the coefficients $A(x)$ bounded by a universal constant. Since $$w-P_m=0\quad \mbox{ on} \quad \{x_n=0\},$$ we obtain
$$\|w-P_m\|_{C^{2,1}(B_{c_0/2}^+)} \le C_3 \|w-P_m\|_{L^\infty(B_{c_0}^+)} \le 2C_3 \eps,$$ with $C_3$ a universal constant. Then
\begin{equation}\label{004.5}
|w-P_m-\tilde l_m-\tilde P_m|\le 2 C_3 \eps |x|^3 \quad \mbox{if $|x| \le c_0/2$},
\end{equation}
with $$\tilde P_m(x')=0,\quad  \tilde l_m=\tilde \gamma_m x_n, \quad \quad \quad |\tilde \gamma_m|,\,\|D^2 \tilde P_m\| \le 2C_3 \eps.$$
Since $\tilde l_m + P_m + \tilde P_m$ is the quadratic expansion for $w$ at $0$ we also have $$\det D^2(P_m + \tilde P_m) =1.$$

We define
$$P_{m+1}(x):=P_m(x) +\tilde P_m(x) -r \tilde \gamma_m \tilde q(x') +\sigma_m x_n^2$$
with $\sigma_m$ so that $$\det D^2 P_{m+1}=1,$$
and let
$$l_{m+1}(x):=\gamma_{m+1}x_n, \quad \quad \gamma_{m+1}=\gamma_m+ r \tilde \gamma_m.$$

Notice that
\begin{equation}\label{005}
|\gamma_{m+1}-\gamma_m|,\, \, \|D^2 P_{m+1}- D^2 P_m\| \, \le C_4 \eps=C_4\eps_0r_0^{m\alpha},
\end{equation}
and
$$ \|D^2 P_{m+1}- D^2 (P_m+ \tilde P_m)\| \le C_4 \delta_0 \eps, $$ for some $C_4$ universal. From the last inequality and \eqref{004}, \eqref{004.5} we find
$$|v-\tilde l_m -P_{m+1}| \le (2C_3 r_0^3+C_2 \delta_0 + C_4 \delta_0) \eps \quad \mbox{in} \quad \Omega_v \cap B_{r_0}^+.$$
This gives
$$|v-\tilde l_m -P_{m+1}| \le \eps r_0^{2+\alpha} \quad \mbox{in} \quad \Omega_v \cap B_{r_0}^+,$$
if we first choose $r_0$ small (depending on $C_3$) and then $\delta_0$ depending on $r_0$, $C_2$, $C_4$,
hence $$|\tilde u- l_{m+1}-P_{m+1}| \le \eps r^2 r_0^{2+\alpha}=\eps_0(rr_0)^\alpha \quad \mbox{in} \quad \Omega \cap B^+_{rr_0}.$$
Finally we choose $\eps_0$ small such that \eqref{005} and $$\gamma_0=0, \quad \|D^2P_0\| \le C_0,$$ guarantee that $$|\gamma_m| \le 1, \quad  \|D^2 P_m\| \le 2 C_0$$ for all $m$. This shows that the induction hypotheses hold for $m+1$ and the lemma is proved.

\end{proof}

 \begin{rem}\label{rem7}The proof of Lemma \ref{lem3} applies also at interior points. More precisely, if $\tilde u$ satisfies the hypotheses in $B_1(x_0) \subset \tilde \Omega$ instead of $B_1 \cap \tilde \Omega$ then the conclusion holds in $B_1(x_0)$. The proof is in fact simpler since, in this case we take $w$ so that $$w=v \quad \mbox{on} \quad \p B_1(x_0),$$ and then \eqref{003} is automatically satisfied, so there is no need for the barrier $\phi$. Also, at the end we apply the classical interior estimate of Pogorelov instead of the estimate in half-domain.
\end{rem}

Now we can sketch a proof of Corollary \ref{cor1} and Theorem \ref{global}. 
 
 If $u$ satisfies the conclusion of Theorem \ref{c2alpha} then, after an appropriate dilation, any point in $\mathcal{C}_\theta \cap B_\delta$ becomes an interior point $x_0$ as in Remark \ref{rem7} above for the rescaled function $\tilde u$. Moreover, the hypotheses of Lemma \ref{lem3} hold in $B_1(x_0)$ for some appropriate $\eps \le \eps_0$. Then Corollary \ref{cor1} follows easily from Remark \ref{rem7}.
 
 If $u$ satisfies the hypotheses of Theorem \ref{global} then we obtain as above that
 $$\|u\|_{C^{2,\alpha}(D_\delta)} \le C, \quad \quad D_\delta:=\{x\in \Omega| \quad dist(x,\p \Omega) \le \delta\},$$
for some $\delta$ and $C$ depending on the data. We combine this with the interior $C^{2,\alpha}$ estimate of Caffarelli in \cite{C2} and obtain the desired bound.

\end{document}